\documentclass[11pt]{article}
\usepackage{amssymb}
\usepackage{graphicx}
\usepackage{setspace}
  \usepackage{paralist}
  \usepackage{longtable}
   \usepackage{multirow}
    \usepackage{rotating}

\usepackage{fancyhdr}
\usepackage[pagewise]{lineno}

\usepackage{amsmath, amsthm, amssymb}
\usepackage{authblk}
\usepackage{graphicx}
\usepackage{float}
\usepackage{hyperref}
\usepackage[margin=1in]{geometry}
\usepackage{comment}
\allowdisplaybreaks[4]
\numberwithin{equation}{section}
\date{}

\newtheorem{theorem}{Theorem}[section]
\newtheorem{proposition}{Proposition}[section]
\newtheorem{lemma}{Lemma}[section]
\newtheorem{corollary}{Corollary}[section]

\newtheorem{remark}{Remark}[section]

\newcommand{\be}{\begin{equation}}
\newcommand\ee{\end{equation}}
\newcommand\bes{\begin{eqnarray}}
\newcommand\ees{\end{eqnarray}}
\newcommand\bess{\begin{eqnarray*}}
\newcommand\eess{\end{eqnarray*}}

\title{\bf Wave propagation of a non-conservative compressible two-fluid model with different pressures}

\author{
Zhigang Wu$^{1}$,\ 
Yinghui Zhang$^{1}$\thanks{Corresponding author (Y.H. Zhang): yinghuizhang@mailbox.gxnu.edu.cn}}

\begin{document}

\maketitle
\renewcommand{\thefootnote}{\fnsymbol{footnote}}

\footnotetext{1. School of Mathematics and Statistics, Guangxi Normal University, Guilin,
 Guangxi 541004, P.R. China}

\pagestyle{myheadings} \markboth{}{Z. WU \& Y. Zhang:\ \ \  Two-phase Model } \maketitle


 {{\bf  Abstract:}} 
This paper investigates the wave propagation for a non-conservative compressible two-fluid model with unequal pressures in $\mathbb{R}^3$. This model is novel and fundamentally distinct from the classical single-phase compressible Navier–Stokes equations proposed by Liu–Wang [28]. This work fills a significant gap, as prior results largely apply only to conservative or partially conservative systems with specific Green's function cancellations. The non-conservative nature and the coexistence of two distinct-speed Huygens waves demand dedicated nonlinear coupling analysis and add further complexity. Our study reveals fundamentally different physical behaviors of this non-conservative two-phase model under equal and unequal interfacial pressures. For equal pressure, the model admits a stationary diffusion wave comprising a single Huygens wave, analogous to classical Navier–Stokes equations. In sharp contrast, under unequal pressure, the stationary diffusion wave consists of two Huygens waves propagating at different speeds—a novel phenomenon absent in the classical Navier–Stokes context.
 \bigskip

\textbf{{\bf Key Words}:}
 Green's function; two-phase flow; non-conservative; wave propagation.

\bigskip

\textbf{{\bf MSC2010}:} 35A09; 35B40; 35J08; 35Q35.

\section{\leftline {\bf{Introduction.}}}
\setcounter{equation}{0}
\subsection{Background and motivation}
\indent As is well--known, most of the flows encountered in nature are
multifluid flows. They are widely used in nuclear power, chemical
processing, oil and gas manufacturing, and so on. To address the inherent complexity of multiphase flows and meet the practical modeling demands of engineers, the volume-averaging method has emerged as a well-established and widely adopted simplification approach. This methodological framework gives rise to the so-called averaged multiphase models, whose detailed derivation and theoretical foundations can be found in relevant studies, cf. \cite{Bear, Brennen1, Raja}. Through the systematic implementation of this volume-averaging procedure, a generic form of the compressible two-phase fluid model can be derived as follows:
\begin{equation}\label{1.1}
\left\{\begin{array}{l}
\alpha^{+}+\alpha^{-}=1, \\
\partial_{t}\left(\alpha^{\pm} \rho^{\pm}\right)+\operatorname{div}\left(\alpha^{\pm} \rho^{\pm} \mathbf{u}^{\pm}\right)=0, \\
\partial_{t}\left(\alpha^{\pm} \rho^{\pm} \mathbf{u}^{\pm}\right)+\operatorname{div}\left(\alpha^{\pm} \rho^{\pm} \mathbf{u}^{\pm} \otimes \mathbf{u}^{\pm}\right)
+\alpha^{\pm} \nabla P^{\pm}\left(\rho^{\pm}\right)=\operatorname{div}\left(\alpha^{\pm} \tau^{\pm}\right), \\
P^{+}\left(\rho^{+}\right)-P^{-}\left(\rho^{-}\right)=f\left(\alpha^{-}
\rho^{-}\right),
\end{array}\right.
\end{equation}
where $\rho^{\pm}(x, t) \geq 0, \mathbf{u}^{\pm}(x, t)$ and
$P^{\pm}\left(\rho^{\pm}\right)=A^{\pm}\left(\rho^{\pm}\right)^{\bar{\gamma}^{\pm}}$
denote the densities, the velocity of each phase, and the two
pressure functions, respectively. The variable $0 \leq \alpha^{+}(x, t) \leq 1$ is the
volume fraction of fluid $+$ (the liquid), $0 \leqq
\alpha^{-}(x, t) \leq 1$ is that of fluid $-$ (the gas),
 the viscous stress tensors are given by
$\tau^{\pm} = \mu^{\pm}(\nabla \mathbf{u}^{\pm} + \nabla^{T} \mathbf{u}^{\pm}) + \lambda^{\pm} (\operatorname{div} \mathbf{u}^{\pm}) \mathbb{I}$, and $\bar{\gamma}^{\pm} \geq 1,
A^{\pm}>0$ are positive constants. In what follows, we set
$A^{+}=A^{-}=1$ without loss of any generality.

When \( f\equiv0 \) (or \( P^+=P^- \)) in model (\ref{1.1}), considerable research has focused on the global existence and large time behavior of weak, strong, and classical solutions. For weak solutions, Bretsch et al. \cite{Bresch1} studied the 3D periodic domain with internal capillary forces, while Bresch--Huang--Li \cite{Bresch2} investigated the 1D whole space without such forces. Regarding strong and classical solutions under small perturbation, existing studies fall into two categories based on the presence or absence of capillary forces.
For model (\ref{1.1}) with capillary forces, Cui et al. \cite{Cui} derived the time-decay rates of classical solutions under the constraints of special density-dependent viscosity coefficients and equal capillary coefficients. This result was later extended by \cite{Liy}, where the authors established decay rates for the same model with general constant viscosities and capillary coefficients, relying on spectral analysis and energy estimates.
In contrast, model (\ref{1.1}) without capillary forces presents inherent challenges: the Green's matrix possesses a zero eigenvalue, which complicates the derivation of energy estimates necessary for proving global existence. Only recently have breakthroughs been made in this direction: Wu-Yao-Zhang \cite{Wug1} and Shou et al. \cite{Shou} established the global existence and temporal decay rates of strong (or smooth) solutions in the Sobolev and critical Besov frameworks, respectively.

Progress has also been made regarding the well-posedness of solutions when \( f\neq0 \) (\( P^+\neq P^- \)). In their influential work \cite{Evje9}, Evje--Wang--Wen proved the global stability of the constant equilibrium state for the 3D Cauchy problem of model (\ref{1.1}), under the assumptions that the initial perturbation is small in \( H^2 \)-norm and bounded in \( L^1 \)-norm, along with the stability condition \( f'<0 \). Conversely, Wu-Yao-Zhang \cite{Wug2} demonstrated the instability of solutions when \( f'>0 \). For more information about
the model (\ref{1.1}) and related models, we refer to \cite{
Evje3, Evje4, Evje8, Friis1, Ishii1,Prosperetti,Raja,Vasseur, Wen1, Yao2,
Zhang4} and references therein.

The main purpose of this work is to establish the large-time wave propagation structure of classical solutions to the non-conservative two-fluid model \eqref{1.1} with $P^+\neq P^-$. This study mainly extends our recent work \cite{Wu7}, where the wave propagation behavior of model \eqref{1.1} with \( P^+=P^- \) exhibits the {\it generalized Huygens principle}—a property first established for one-phase compressible Navier–Stokes equations \cite{Lw}. This principle dictates that the time-asymptotic profile of the solution decomposes into a superposition of a stationary diffusion wave (D-wave) of the form \( (1+t)^{-\frac{3}{2}}\big(1+\frac{|x|^2}{1+t}\big)^{-\frac{3}{2}} \), and a moving diffusion wave (H-wave) given by \( (1+t)^{-2}\big(1+\frac{(|x|-{\rm c}t)^2}{1+t}\big)^{-\frac{3}{2}} \), with the propagation speed \( {\rm c} \) specified in Theorem \ref{l 1.1}. The H-wave arises from wave-type components in the low-frequency region of Green's function; since it decays more slowly than the D-wave in \( L^p(\mathbb{R}^3) \) for \( p<2 \), and hence its nonlinear coupling requires extracting one derivative from nonlinear terms— a step typically dependent on the conservative (divergence) structure of the equations, as shown in earlier studies \cite{Ls,Lw}. We would like to refer to other related results for the long-time behaviors of the one-phase model in \cite{Dan1,Duan,Duan3,Guo2,Hoff1,Hoff2, Kobayashi2,Lihl,L7,Mat1,Mat2,Wang-Tan,Zeng1} and the kinetic equations in \cite{Lihl2,Lihl3,L4,L6}.

Compared to classical one-phase models (i.e., compressible Navier-Stokes equations \cite{Ls, Lw}) and the model (\ref{1.1}) with  \(P^+= P^-\) \cite{Wu7}, the analysis of the two-fluid system (\ref{1.1}) with \( P^+\neq P^- \) confronts two key structural obstacles: First, the system is inherently non-conservative due to the pressure terms \( \alpha^\pm\nabla P \) in the momentum equations, which preclude the direct application of cancellation and reformulation techniques that are critical to the analysis of conservative frameworks (e.g., \cite{Ls, Lw}). Second, linear analysis of model (\ref{1.1}) reveals two Huygens waves with different propagation speeds—a feature entirely absent in the \( P^+=P^- \) case \cite{Wu7}, where the system obeys the typical generalized Huygens principle: a stationary diffusion wave (D-wave) superimposed on a single moving diffusion wave (H-wave), basically consistent with the wave behavior of classical one-phase Navier-Stokes equations \cite{Ls, Lw}.

The key innovation of this paper consists in resolving the simultaneous interaction between Huygens waves of distinct propagation speeds within a non-conservative framework. In fact, the conservative structure is rather important in \cite{Ls, Lw} when studying the interaction of the Huygens waves with equal propagation speed:
\begin{equation*}\label{0.1}
\begin{array}{ll}
\! &\displaystyle\int_{0}^{t}\!\!\int_{\mathbb{R}^3}\underline{(1+t-\tau)^{-\frac{5}{2}}}\Big(1+\frac{(|x-y|-{\rm c}(t-\tau))^2}{1\!+\!t\!-\!\tau}\Big)^{-N}(1+\tau)^{-4}\Big(1+\frac{(|y|-{\rm c}\tau)^2}{1\!+\!\tau}\Big)^{-3}dyd\tau\\[2mm]
\ \ \ \ \lesssim\!\!\!\! \ &\displaystyle(1+t)^{-2}\Big(\big(1+\frac{|x|^2}{1+t}\big)^{-\frac{3}{2}}+\Big(1+\frac{(|x|-{\rm c}t)^2}{1+t}\Big)^{-\frac{3}{2}}\Big),\ \ \ \ \ \ \ N\ {\rm is\ suitably\ large}.
\end{array}
\end{equation*}
Here, the additional $(1+t)^{-\frac{1}{2}}$-decay rate (underlined) typically arises from the system’s conservative (divergence) form—an advantage our non-conservative model entirely lacks. This absence poses a critical difficulty: the H-wave $(1+t)^{-2}\big(1+\frac{(|x|-{\rm c}t)^2}{1+t}\big)^{-N}$ decays more slowly than the heat kernel in $L^p$ spaces for $1\leq p<2$; even worse, its $L^p$-estimates grow in time when $p<\frac{3}{2}$, making nonlinear coupling analysis far more challenging.
On the other hand, Bai-Zhang \cite{Bai} identified two H-waves with different propagation speeds for conservative compressible viscoelastic fluids, and verified the generalized Huygens principle—a D-wave superimposed on two distinct H-waves—within a conservative framework. The key challenge in our work lies in establishing the necessary pointwise estimates for the interaction of these distinct H-waves under non-conservative conditions. Unlike \cite{Bai}, which relies on the convenience of a conservative structure, our model’s non-conservative nature brings new difficulties that render conventional analytical methods ineffective. To address these new difficulties arising from the non-conservative setting (a key difference from \cite{Bai}), we revisit nonlinear convolution estimates through refined space-time decomposition techniques, combined with full exploitation of the interchange between temporal and spatial variables in specific regions. For example, when handling the domain $D_{22}:=\{\frac{{\rm c}_1t}{2}\leq|x|\leq{\rm c}_1t-\sqrt{1+t}\}$ in Lemma \ref{l 4.2}, we split it into four parts. In the subcase where the temporal integral interval satisfies $\frac{{\rm c}_1t-|x|}{2({\rm c}_1+{\rm c}_2)}\leq\tau\leq \frac{t}{2}$ (with propagation speeds ${\rm c}_1$ and ${\rm c}_2$), adopting the approach in \cite{Bai} would lead to a logarithmic growth factor, making it impossible to close the pointwise space-time ansatz for the nonlinear problem. Our key innovation here is to tackle this subcase directly using spherical coordinates, thereby obtaining sharper space-time estimates from the convolution. Details of the other cases are provided in the proof of nonlinear coupling in Section 5.2.

\subsection{New formulation of system \eqref{1.1} and Main Results}
In this subsection, we devote ourselves to reformulating the system
\eqref{1.1} and stating the main results. The relations between the
pressures of $\eqref{1.1}_3$ implies
\begin{equation}\label{1.9}
{\rm d}P
  =
  s_+^2
 {\rm d}\rho^+
  =
   s_-^2
    {\rm d}\rho^-,
 \quad
  {\rm where}
   \quad
    s_\pm
     :=
      \sqrt{ \frac{{\rm d}P}{{\rm d}\rho^\pm}(\rho^\pm)}.
\end{equation}
Here $s_\pm$ represent the sound speed of each phase respectively.
As in \cite{Bresch1}, we introduce the fraction densities
\begin{equation}\label{1.10}
R^\pm
 =
  \alpha^\pm \rho^\pm,
\end{equation}
which together with the relation $\alpha^++\alpha^-=1$ leads to
\begin{equation}\label{1.11}
{\rm d}
 \rho^+
  =
   \frac{1}{\alpha_+}
    ({\rm d}R^+
       -
        \rho^+
         {\rm d}\alpha^+),
  \quad
   {\rm d}
    \rho^-
     =
      \frac{1}{\alpha_-}
       ({\rm d}R^-
         +
          \rho^-
           {\rm d}\alpha^+).
\end{equation}
From \eqref{1.9}--\eqref{1.10}, we finally get
\begin{equation}\notag
{\rm d}\alpha^+
 =
  \frac{\alpha^-s_+^2}{\alpha^-\rho^+s_+^2+\alpha^+\rho^-s_-^2}
   {\rm d}R^+
    -
     \frac{\alpha^+s_-^2}{\alpha^-\rho^+s_+^2+\alpha^+\rho^-s_-^2}
   {\rm d}R^-.
\end{equation}
Substituting the above equality into \eqref{1.11} yields
\[
\mathrm{d} \rho^{+}=\frac{\rho^{+} \rho^{-}
s_{-}^{2}}{R^{-}\left(\rho^{+}\right)^{2}
s_{+}^{2}+R^{+}\left(\rho^{-}\right)^{2} s_{-}^{2}}\left(\rho^{-}
\mathrm{d} R^{+}+\left(\rho^{+}+\rho^{+} \frac{\alpha^{-}
f^{\prime}}{s_{-}^{2}}\right) \mathrm{d} R^{-}\right),
\]
and
\[
\mathrm{d} \rho^{-}=\frac{\rho^{+} \rho^{-}
s_{+}^{2}}{R^{-}\left(\rho^{+}\right)^{2}
s_{+}^{2}+R^{+}\left(\rho^{-}\right)^{2} s_{-}^{2}}\left(\rho^{-}
\mathrm{d} R^{+}+\left(\rho^{+}-\rho^{-} \frac{\alpha^{+}
f^{\prime}}{s_{+}^{2}}\right) \mathrm{d} R^{-}\right),
\]
which combined with $(\ref{1.9})$ imply for the pressure differential
\[
\mathrm{d} P^{+}=\mathcal{C}^{2}\left(\rho^{-} \mathrm{d}
R^{+}+\left(\rho^{+}+\rho^{+} \frac{\alpha^{-}
f^{\prime}}{s_{-}^{2}}\right) \mathrm{d} R^{-}\right) ,\] and
\[
\mathrm{d} P^{-}=\mathcal{C}^{2}\left(\rho^{-} \mathrm{d}
R^{+}+\left(\rho^{+}-\rho^{-} \frac{\alpha^{+}
f^{\prime}}{s_{+}^{2}}\right) \mathrm{d} R^{-}\right) ,\] where
\[
\mathcal{C}^{2}:=\frac{s_{-}^{2} s_{+}^{2}}{\alpha^{-} \rho^{+}
s_{+}^{2}+\alpha^{+} \rho^{-} s_{-}^{2}}.\]
Next, by using the relation $\alpha^+ +\alpha^- =1$ again, we have
\begin{equation}\label{1.13}
\frac{R^+}{\rho^+}
 +
  \frac{R^-}{\rho^-}
   =1,
    \quad
     {\rm and \ therefore}
      \quad
       \rho^-
        =
         \frac{R^-\rho^+}{\rho^+-R^+}.
\end{equation}
By virtue of $\eqref{1.1}_3$, we have
\begin{equation}\notag
\varphi(\rho^+, R^+, R^-)
 :=P(\rho^+)-P\left(\frac{R^-\rho^+}{\rho^+-R^+}\right)-f(R^-)=0.
\end{equation}
Consequently, for any given two positive constants $\tilde R^+$,
$\tilde R^-$, there exists $\tilde \rho^+>\tilde R^+$ such that
\begin{equation}\notag
\varphi(\tilde \rho^+, \tilde R^+, \tilde R^-)=0.
\end{equation}
Differentiating $\varphi$ with respect to $\rho^+$, we get
\begin{equation}\notag
\frac{\partial\varphi}{\partial\rho^+}(\rho^+, R^+, R^-)=s_+^2+s_-^2\frac{R^-R^+}{(\rho^+-R^+)^2}>0,
\end{equation}
which together with Implicit Function Theorem and \eqref{1.10},
\eqref{1.13} implies that the unknowns $\rho^\pm$, $\alpha^\pm$ and
$\mathcal{C}$ can be given by
\begin{equation}\notag
\rho^\pm
     =
      \varrho^\pm(R^+,R^-),
\qquad
      \alpha^\pm
       =
        \alpha^\pm(R^+,R^-),
         \quad
     {\rm and \ therefore}
      \quad
      \mathcal{C}=\mathcal{C}(R^+, R^-).
\end{equation}
 We refer to [\cite{Bresch1}, pp. 614] for the details.
\par

Notice that the non-conservation of system (\ref{1.1}) is mainly from two pressure terms $\alpha^\pm\nabla P^\pm$ in the nonlinear part, since the other nonlinear terms are of the divergence form. As mentioned above, the conservation is critical when deducing the generalized Huygens' principle. To this end, we still need to use the divergence form of the other nonlinear terms when dealing with the nonlinear coupling. Therefore, we consider the following system with respect to the unknowns $(R^\pm,\ \mathbf{m}^\pm=R^\pm \mathbf{u}^\pm)$:
\begin{equation}\label{1.14}
\left\{\begin{array}{l}
\partial_{t} R^{\pm}+\operatorname{div}\mathbf{m}^\pm=0, \\
\partial_{t}\mathbf{m}^++\operatorname{div}\big(\frac{\mathbf{m}^+\otimes\ \!\mathbf{m}^+}{R^+}\big)+\alpha^{+} \mathcal{C}^{2}\big[\rho^{-} \nabla
R^{+}+\rho^{+}\nabla R^{-}\big] \\
\hspace{1.8cm}=\operatorname{div}\big\{\alpha^{+}\big[\mu^{+}\big(\nabla
\frac{\mathbf{m}^+}{R^+}+\nabla^{T} \frac{\mathbf{m}^+}{R^+}\big)
+\lambda^{+} \operatorname{div} \frac{\mathbf{m}^+}{R^+} \mathbb{I}\big]\big\}, \\
\partial_{t}\mathbf{m}^-+\operatorname{div}\big(\frac{\mathbf{m}^-\otimes\ \!\mathbf{m}^-}{R^-}\big)+\alpha^{-} \mathcal{C}^{2}\big[\rho^{-} \nabla
R^{+}+\rho^{+}\nabla R^{-}\big] \\
\hspace{1.8cm}=\operatorname{div}\big\{\alpha^{-}\big[\mu^{-}\big(\nabla
\frac{\mathbf{m}^-}{R^+}+\nabla^{T} \frac{\mathbf{m}^-}{R^-}\big)
+\lambda^{-} \operatorname{div} \frac{\mathbf{m}^-}{R^-} \mathbb{I}\big]\big\},
\end{array}\right.
\end{equation}
subject to the initial condition
\begin{equation}\label{1.15} (R^{+}, \mathbf{m}^{+}, R^-, \mathbf{m}^{-})(x,
0)=(R_{0}^{+}, \mathbf{m}_{0}^{+}, R_{0}^+, \mathbf{m}_{0}^{-})(x)\rightarrow(\bar
R^{+}, \mathbf{0}, \bar R^{-}, \mathbf{0}) \quad
\hbox{as}\quad |x|\rightarrow\infty \in \mathbb{R}^{3},
\end{equation}
where the constants  $\bar{R}^{+}$ and $\bar{R}^{-}$ denote
the background doping profile and are taken as
1 for simplicity in the present paper.

\medskip
Now, we are in a position to state our main result.
\smallskip
\begin{theorem}\label{l 1.1} Suppose that the following stability condition holds
\begin{equation}\label{1.12}
-\frac{s_{-}^{2}(1,1)}{\alpha^{-}(1,1)}<f^{\prime}(1)<\frac{\eta-s_{-}^{2}(1,1)}{\alpha^{-}(1,1)}<0,
\end{equation}
where $\eta$ is a positive, small fixed constant. When the initial data $(R_{0}^{\pm}-1,\ \mathbf{m}_{0}^{\pm})\in H^5(\mathbb{R}^3)$, and has the compact support or satisfies the pointwise assumption for $|\alpha|\leq 1$ that
\begin{equation}\label{1.16}
|\partial_x^\alpha(R_0^\pm-1,\mathbf{m}_0^\pm)|\leq \varepsilon_0(1+|x|^2)^{-r},\ \ r>\frac{19}{10},
\end{equation}
then the Cauchy problem \eqref{1.14}--\eqref{1.15} admits a unique
solution $\left(R^{+}, \mathbf{m}^{+}, R^{-}, \mathbf{m}^{-}\right)$ globally in time
and obeys the following pointwise space-time descriptions
\begin{equation}\label{1.17}
|R^\pm-1|\lesssim (1+t)^{-2}\Big(1+\frac{|x|^2}{1+t}\Big)^{-1}+(1+t)^{-2}\Big(1+\frac{(|x|\!-\!{\rm c}_1t)^2}{1+t}\Big)^{-1}+(1+t)^{-2}\Big(1+\frac{(|x|\!-\!{\rm c}_2t)^2}{1+t}\Big)^{-1},
\end{equation}
\begin{equation}\label{1.18}
|\mathbf{m}^\pm|\lesssim (1+t)^{-\frac{3}{2}}\Big(1+\frac{|x|^2}{1+t}\Big)^{-\frac{3}{2}}+(1+t)^{-2}\Big(1+\frac{(|x|\!-\!{\rm c}_1t)^2}{1+t}\Big)^{-1}+(1+t)^{-2}\Big(1+\frac{(|x|\!-\!{\rm c}_2t)^2}{1+t}\Big)^{-1},
\end{equation}
where the propagation speeds ${\rm c}_1$ and ${\rm c}_2$ are
\begin{equation}\label{1.19}
{\rm c}_1=\sqrt{\frac{\beta_1+\beta_4}{2}-\sqrt{\frac{(\beta_1+\beta_4)^2}{4}-(\beta_1\beta_4-\beta_2\beta_3)}},
\end{equation}
\begin{equation}\label{1.20}
{\rm c}_2=\sqrt{\frac{\beta_1+\beta_4}{2}+\sqrt{\frac{(\beta_1+\beta_4)^2}{4}-(\beta_1\beta_4-\beta_2\beta_3)}},
\end{equation}
and $\beta_1,\beta_2,\beta_3,\beta_4$ are defined in (\ref{1.23}).
\end{theorem}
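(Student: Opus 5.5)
The plan follows the Green's function strategy of Liu--Wang \cite{Lw} and Bai--Zhang \cite{Bai}, adapted to the non-conservative pressure terms.

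\textbf{Step 1: reformulation.} Set $U=(R^+-1,\mathbf{m}^+,R^--1,\mathbf{m}^-)$ and linearize \eqref{1.14} at $(1,\mathbf{0},1,\mathbf{0})$. This gives $\partial_tU+\mathcal{L}U=\mathcal{N}(U)$ with constant coefficients $\beta_1,\dots,\beta_4$ from \eqref{1.23} in the pressure coupling. Split the nonlinearity as $\mathcal{N}=\operatorname{div}\mathcal{Q}+\mathcal{P}$, where $\mathcal{Q}$ collects the convective and viscous terms (divergence form) and $\mathcal{P}$ collects the pressure remainders $(\alpha^\pm\mathcal{C}^2\rho^\mp-\text{const})\nabla R^\pm$. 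The latter are only of the form $O(|U|)\,\nabla U$, so they are genuinely non-conservative.

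\textbf{Step 2: linear analysis.} Take the Fourier transform and decompose the momenta into compressible and incompressible parts. The compressible part reduces to a $4\times4$ symbol whose eigenvalues at low frequency are
\[
\lambda=\pm i{\rm c}_j|\xi|-\kappa_j|\xi|^2+O(|\xi|^3),\qquad j=1,2,
\]
where ${\rm c}_j^2$ are the roots of $c^4-(\beta_1+\beta_4)c^2+\beta_1\beta_4-\beta_2\beta_3=0$, giving \eqref{1.19}--\eqref{1.20}. The stability condition \eqref{1.12} ensures the roots are real, positive and distinct, and that $\kappa_j>0$. The incompressible part is a heat kernel.

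Following \cite{Lw,Bai}, I would then derive pointwise bounds for the Green's matrix $G$:
\begin{itemize}
\item the low-frequency part, for $|\alpha|\ge0$, is bounded by $|\partial_x^\alpha G_L|\lesssim$ a D-wave $(1+t)^{-\frac{3+|\alpha|}{2}}B_N(|x|,t)$ plus the H-waves $(1+t)^{-2-\frac{|\alpha|}{2}}\big(1+\frac{(|x|-{\rm c}_jt)^2}{1+t}\big)^{-N}$, $j=1,2$, together with the Riesz-type terms from the projection, of size $(1+t+|x|^2)^{-\frac{3+|\alpha|}{2}}$ inside the cone;
\item the high-frequency part is a $\delta$-singular part plus $e^{-(|x|+t)/C}$.
\end{itemize}
The density components of the D-wave come with an extra factor $|\xi|$, which is why $R^\pm$ gets the weaker-in-$x$ but faster-in-$t$ profile in \eqref{1.17}.

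\textbf{Step 3: nonlinear problem.} Global existence with $H^5$ energy decay comes from \cite{Evje9}-type energy estimates. I would then close the ansatz \eqref{1.17}--\eqref{1.18} by Duhamel's formula,
\[
U=G*U_0+\int_0^t\Big(\nabla G(t-\tau)*\mathcal{Q}+G(t-\tau)*\mathcal{P}\Big)d\tau .
\]
Pointwise decay of $\nabla U$ follows from interpolating the energy decay with the ansatz, so that $|\mathcal{P}|$ is bounded by products of the ansatz with a $(1+\tau)^{-1/2}$-type gain only in $L^\infty$, not a derivative on $G$. The initial term is handled by the decay \eqref{1.16} with $r>19/10$.

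\textbf{Main obstacle.} The hard part is the convolution of the H-wave kernels $(1+t-\tau)^{-2}\big(1+\frac{(|x-y|-{\rm c}_i(t-\tau))^2}{1+t-\tau}\big)^{-N}$ (no derivative gain, because of $\mathcal{P}$) against sources $(1+\tau)^{-4}\big(1+\frac{(|y|-{\rm c}_j\tau)^2}{1+\tau}\big)^{-2}$ with $i\ne j$ and $i=j$, and also against D-wave squared sources. I would partition $(x,t)$ into the regions $|x|\le\sqrt{1+t}$, $\sqrt{1+t}\le|x|\le{\rm c}_1t/2$, ${\rm c}_1t/2\le|x|\le{\rm c}_1t-\sqrt{1+t}$, the regions near each cone, the region between the cones, and the region outside. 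Within each, I would split $\tau\in[0,t/2]$ and $[t/2,t]$, and further by where the two cones intersect.

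In the critical subregion $D_{22}$ with $\frac{{\rm c}_1t-|x|}{2({\rm c}_1+{\rm c}_2)}\le\tau\le t/2$, I would not use the crude $L^\infty$--$L^1$ splitting, which loses a logarithm. Instead I would write the $y$-integral in spherical coordinates centered at $x$, integrate the angular variable exactly against the shell $|y|\approx{\rm c}_j\tau$, and then interchange the $\tau$ and radial integrations, trading $\tau$-integration for spatial localization. The aim is the bound $(1+t)^{-2}\big(1+\frac{(|x|-{\rm c}_1t)^2}{1+t}\big)^{-1}$ without a log. The weaker power $-1$ (rather than $-3/2$) in the H-wave ansatz is exactly what these non-conservative couplings allow, so every estimate must be checked to reproduce at least that profile.
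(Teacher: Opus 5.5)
Your overall architecture is the paper's. You split the nonlinearity into divergence-form terms and the non-conservative pressure remainders, use the Hodge decomposition and the two-speed low-frequency expansion, bound the Green's function with a $\delta$-like high-frequency part, apply Duhamel, and run region-by-region convolution estimates, including spherical coordinates with the $|x|^{-1}$ gain in the critical subcase of $D_{22}$.

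The step that fails is the control of $\nabla U$ inside $\mathcal P=O(|U|)\nabla U$. You close the ansatz only for $U$ and obtain $\nabla U$ by interpolating the energy decay with it. The energy bounds carry no spatial weight, so any such interpolation (e.g. $|\nabla U(x)|\lesssim\|U\|_{L^\infty(B(x,1))}^{1/2}\|\nabla^2U\|_{L^\infty}^{1/2}$) gives $\nabla U$ only the square root of the spatial profile. Then $|\mathcal P|$ decays like the $3/2$-power of the profile. The sources you plan to convolve, $(1+\tau)^{-4}\big(1+\frac{(|y|-{\rm c}_j\tau)^2}{1+\tau}\big)^{-2}$ and the squared D-waves, are therefore not available. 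This loss is fatal:
for $|x|\approx2{\rm c}_2t$ and $\tau=O(1)$, the shell $\{|x-y|\approx{\rm c}_1(t-\tau)\}$ carrying the H-wave kernel has volume $\sim t^{5/2}$ and lies in $|y|\gtrsim t$. There your bound only gives $|\mathcal P|\sim|y|^{-3}$, so this shell contributes $t^{-2}\cdot t^{5/2}\cdot t^{-3}=t^{-5/2}$. Every term on the right of \eqref{1.17}--\eqref{1.18} is $O(t^{-3})$ there, and with the squared profile one would get $t^{-7/2}$.

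The missing idea is to put first derivatives into the bootstrap. The paper's ansatz \eqref{4.4}--\eqref{4.5} is imposed on $\partial_x^kU$, $k\le1$, with the same profile as $U$. Then $|\mathcal P|\lesssim|U||\nabla U|$ is a genuine product of two profiles, which is exactly what Lemma \ref{l 4.2} takes as input. Closing the $k=1$ ansatz requires splitting $G=(G-G_S)+G_S$:
\begin{itemize}
\item On the regular part $G-G_S$, the derivative goes onto the kernel, whose $\partial_x$-bounds in Proposition \ref{l 3.3} are no worse.
\item On the $\delta$-like part $G_S$ (exponentially decaying in $t-\tau$), all derivatives must go onto the nonlinearity, so your term $\nabla G*\mathcal Q$ must be replaced there by $G_S*\nabla\mathcal Q$.
\item The resulting top-order factors, up to $\partial_x^3\mathbf m^\pm$, are bounded only in $L^\infty$ via the $H^5$ energy estimate and Sobolev embedding, while the remaining factor carries the pointwise profile. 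This is where $H^5$ is genuinely needed.
\end{itemize}

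A smaller point concerns your explanation of the density profile. The density rows of $G$ contain no D-wave at all, only the two H-waves (Lemma \ref{l 3.1}). An extra factor $|\xi|$ would also improve the spatial decay rather than weaken it. The term $(1+t)^{-2}\big(1+\frac{|x|^2}{1+t}\big)^{-1}$ in \eqref{1.17} is instead generated by the non-conservative nonlinear coupling, through the alternative bounds in $\mathcal K_1$ and $\mathcal K_3$ (e.g. Case 2.3).
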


\begin{remark}Notice that ${\rm c}_1\neq{\rm c}_2$ due to the fact $\beta_1\beta_4>\beta_2\beta_3$ according to (\ref{1.23}) and the stability condition (\ref{1.12}). This key distinction allows us to reveal a critical difference between our two-fluid model with unequal pressures and the equal-pressure two-fluid model studied in \cite{Wu7}, where only one Huygens wave emerges in the pointwise description of the solution, whereas our model gives rise to two distinct Huygens waves. This finding constitutes a novel observation that advances the understanding of two-fluid dynamics. Additionally, the pointwise behavior of our two-fluid model differs fundamentally from that of the standard one-fluid model, specifically the compressible Navier-Stokes equations established in \cite{Ls,Lw}.
\end{remark}

\begin{remark}The Huygens wave in Theorem \ref{l 1.1} decays as $(1+t)^{-2}\big(1+\frac{(|x|-{\rm c}t)^2}{1+t}\big)^{-1}$, which is notably weaker than $(1+t)^{-2}\big(1+\frac{(|x|-{\rm c}t)^2}{1+t}\big)^{-\frac{3}{2}}$ reported for conservative systems—including the compressible Navier-Stokes equations \cite{Ls,Lw} and their bipolar Poisson-coupled variants \cite{Wu4,Wu5}. This weakened spatial decay is a direct result of the non-conservative terms in our model, presenting a major technical challenge: unlike our prior work on other fluid models, we cannot rely on cancellations in the low-frequency component of Green's function to offset the non-conservative structure of the two-fluid system. This difficulty required the development of new analytical strategies to establish the desired pointwise estimates.
\end{remark}

\begin{remark}\label{r 1.3} Another difference from the single-phase flow (compressible Navier–Stokes system) concerns the  $L^p$-estimates of momenta. In the low-frequency regime, the Riesz operator singularity in the two-fluid Green's function cannot be fully canceled, unlike the effective cancellation of the single-phase flow in \cite{Hoff1,Hoff2}. This leaves an irreducible residual ``Riesz wave I\!I\!I" in (\ref{3.7})-(\ref{3.8}), which prevents an $L^1$-estimate for the momenta, while $L^p (p>1)$-estimate of momenta remains valid. Specifically, according to the linear analysis in Section 3 and standard $L^p-L^q$ estimation method, we have
\begin{align}\label{1.22}
&\|R^\pm-1\|_{L^p(\mathbb{R}^3)}\lesssim \Bigg\{\!\!\begin{array}{cc}
  \displaystyle (1+t)^{-(2-\frac{5}{2p})}, & 1\leq p\leq2, \\
   \displaystyle (1+t)^{-(\frac{3}{2}-\frac{3}{2p})}, & 2\leq p\leq\infty,
\end{array}
\end{align}
\vspace{-3mm}
\begin{align}
&\|\mathbf{m}^\pm\|_{L^p(\mathbb{R}^3)}\lesssim \Bigg\{\!\!\begin{array}{cc}
  \displaystyle (1+t)^{-(2-\frac{5}{2p})}, & 1<p\leq2, \\
   \displaystyle (1+t)^{-(\frac{3}{2}-\frac{3}{2p})}, & 2\leq p\leq\infty,
\end{array}
\end{align}
if the initial perturbation is suitably small in $H^3(\mathbb{R}^3)\cap L^1(\mathbb{R}^3)$.


\end{remark}


\smallskip
\smallskip

\indent Now, let us outline the proof strategy for Theorem \ref{l 1.1} and highlight the main difficulties and key techniques.

First, due to the presence of two pressure terms, the original model does not possess a conservative structure. To more clearly illustrate the key technical difficulties arising from the non-conservative nonlinear pressure terms (in non-divergence form) that complicate subsequent nonlinear analysis, we choose the standard conserved quantities—mass and momentum—as the unknown variables for our model study. Specifically, we choose the unknowns $(R^\pm,\ \mathbf{m}^\pm=R^\pm \mathbf{u}^\pm)$ instead of the $(R^\pm,\ \mathbf{u}^\pm)$ employed by Wang $et.\ al.$ \cite{Wangh}. This deliberate variable transformation is non-trivial, as it fundamentally alters the structure of the nonlinear terms in the model, laying the foundation for addressing the non-conservative challenges inherent to our system.
To see this, by setting $n^{\pm}=R^{\pm}-1$, we can rewrite the linearized system of model \eqref{1.14} in terms of the variables $(n^+, \mathbf{m}^+, n^-, \mathbf{m}^-)$ as follows:
\begin{equation}\label{1.27}
\left\{\begin{array}{l}
\partial_{t} n^++\operatorname{div}\mathbf{m}^+=0, \\
\partial_{t}\mathbf{m}^{+}+\beta_1\nabla n^++\beta_2\nabla
n^--\nu^+_1\Delta \mathbf{m}^+-\nu^+_2\nabla\operatorname{div} \mathbf{m}^+=F_1, \\
\partial_{t} n^-+\operatorname{div}\mathbf{m}^-=0, \\
\partial_{t}\mathbf{m}^{-}+\beta_3\nabla n^++\beta_4\nabla
n^--\nu^-_1\Delta \mathbf{m}^--\nu^-_2\nabla\operatorname{div} \mathbf{m}^-=F_2, \\
\end{array}\right.
\end{equation}
where $\nu_{1}^{\pm}=\frac{\mu^{\pm}}{\bar{\rho}^{\pm}}$,
$\nu_{2}^{\pm}=\frac{\mu^{\pm}+\lambda^{\pm}}{\bar{\rho}^{\pm}}>0$, and the coefficients $\beta_i$ ($i=1,2,3,4$), which are closely related to the pressure terms, are given by
\begin{eqnarray}
\beta_{1}=\frac{\mathcal{C}^{2}(1,1)
\rho^{-}(1,1)}{\rho^{+}(1,1)},\ \
\beta_{2}=\mathcal{C}^{2}(1,1)+\frac{\mathcal{C}^{2}(1,1)
\alpha^{-}(1,1) f^{\prime}(1)}{s_{-}^{2}(1,1)},\nonumber\\
\beta_{3}=\mathcal{C}^{2}(1,1),\ \
\beta_{4}=\frac{\mathcal{C}^{2}(1,1)
\rho^{+}(1,1)}{\rho^{-}(1,1)}-\frac{\mathcal{C}^{2}(1,1)
\alpha^{+}(1,1) f^{\prime}(1)}{s_{+}^{2}(1,1)}.\label{1.23}
\end{eqnarray}
A further key step in addressing the non-conservative difficulties is our decomposition of the nonlinear terms $F_1$ and $F_2$, which we rewrite as
\begin{equation}\label{1.29}
F_1=\tilde{F}_1-\underbrace{[\alpha^+\nabla P-(\beta_1\nabla n^++\beta_2\nabla n^-)]}_{Q_1}
,\ {\rm and}\ \ F_2=\tilde{F}_2-\underbrace{[\alpha^-\nabla P-(\beta_3\nabla n^++\beta_4\nabla n^-)]}_{Q_2},
\end{equation}
where $\tilde{F}_1$ and $\tilde{F}_2$ represent the divergence-form terms in $F_1$ and $F_2$—these terms are well-behaved and do not introduce new technical obstacles. In contrast, the non-divergence-form components $Q_1$ and $Q_2$ are the primary source of difficulty, as they embody the non-conservative nature of the pressure terms that distinguish our work from existing studies (e.g., \cite{Wangh}). The detailed expressions of $F_1$ and $F_2$, including the specific form of the challenging  terms $Q_1$ and $Q_2$, will be provided in Section 2, laying the groundwork for our subsequent nonlinear analysis.

Second, we perform a spectral analysis based on low-middle-high-frequency decomposition. The main difficulty and difference lie in low frequencies compared with one-phase model (compressible Navier-Stokes equations).  On one hand, the Hodge decomposition introduces nonlocal Riesz operators with symbols $\frac{\xi}{|\xi|}$ or $\frac{\xi\xi^T}{|\xi|^2}$—these are formally singular in the low-frequency regime. When combined with the wave operators and heat operators appearing in the Green's function, these singularities create a major obstacle. In fact, for one-phase model (compressible Navier-Stokes equations in Hoff and Zumbrun \cite{Hoff1,Hoff2}, they can completely remove the singularity by establishing an ``effective artificial viscosity" system and using the cancellation of the compressible acoustic component \(G_2 = w_t * K_{\mu+\lambda} * R\) and the incompressible shear component \(G_3 = K_\mu * (\delta \mathbb{I} - R)\) exhibit a low-frequency cancellation in Fourier space: $\hat G_2 + \hat G_3 = e^{-\mu|\xi|^2 t} \mathbb{I} + \bigl(\cos(c|\xi|t) e^{-(\mu+\lambda)|\xi|^2 t} - e^{-\mu|\xi|^2 t}\bigr) \frac{\xi\xi^T}{|\xi|^2}$, where $K_{\mu}$ and $K_{\mu+\lambda}$ are the heat kernels, $R$ denotes the double Riesz operator, and the coefficient of the singular Riesz projector \(\frac{\xi\xi^T}{|\xi|^2}\) vanishes to order \(O(|\xi|^2)\) at \(\xi=0\), thereby eliminating the non-integrable \(|x|^{-n}\) tail and upgrading the decay to \(|x|^{-(n+2)}\), so that the combined kernel \((G_2+G_3)(\cdot,t)\) belongs to \(L^1(\mathbb{R}^n)\) (up to a logarithmic correction when \(n=2\)). However, for two-fluid model (\ref{1.1}), one cannot fully remove this singularity in the low frequency of Green's function due to the complicity and strong coupling of the model, and hence we just get a weak type (1,1) estimate on the momenta. See the details in Remark 1.3 and (\ref{3.7})-(\ref{3.8}). On the other hand, the pressure difference across the two-fluid interface endows the low-frequency Green's function with two wave operators, thereby generating two distinct Huygens waves—designated as Huygens wave-I and Huygens wave--I\!I—in the low-frequency regime. Together with the inherent non-conservativity of the model, this bimodal wave structure poses substantial challenges for closing nonlinear estimates and rigorously characterizing wave propagation. To overcome these difficulties, we develop novel nonlinear convolution estimates that precisely capture the interactions among the emerging wave patterns (see the proof of Lemma \ref{l 4.2} for details).

Finally, we address the nonlinear coupling stage by combining the pointwise space-time estimates of the Green's function with the Duhamel principle, alongside the nonlinear estimates established in this paper for the non-conservative model \eqref{1.1}. This allows us to obtain the desired pointwise description of the solution to the full nonlinear problem. The key novelty lies in the effective use of pointwise Green's function bounds to handle the non-conservative coupling, which goes beyond standard energy or Fourier-based methods.

\section{\leftline {\bf{Green's function}}}
\setcounter{equation}{0}
\subsection{Linearization and Reformulation}  Without loss of generality, we set ${\bar R}^\pm=1$ and
\[ n^{\pm}=R^{\pm}-1,
\]
then the Cauchy problem \eqref{1.14}--\eqref{1.15} can be reformulated as
\begin{equation}\label{2.1}
\left\{\begin{array}{l}
\partial_{t} n^++\operatorname{div}\mathbf{m}^+=0, \\
\partial_{t}\mathbf{m}^{+}+\beta_1\nabla n^++\beta_2\nabla
n^--\nu^+_1\Delta \mathbf{m}^+-\nu^+_2\nabla\operatorname{div} \mathbf{m}^+=F_1, \\
\partial_{t} n^-+\operatorname{div}\mathbf{m}^-=0, \\
\partial_{t}\mathbf{m}^{-}+\beta_3\nabla n^++\beta_4\nabla
n^--\nu^-_1\Delta \mathbf{m}^--\nu^-_2\nabla\operatorname{div} \mathbf{m}^-=F_2.
\end{array}\right.
\end{equation}
Note that (\ref{1.23}) together with the stability condition (\ref{1.12}) yields that
\begin{equation}\label{2.1(1)}
\beta_1\beta_4>\beta_2\beta_3.
\end{equation}

The nonlinear terms are given by
\begin{equation}\label{2.2}
\arraycolsep=1.5pt
\begin{array}{rl}
F_{1}=&-\operatorname{div}\big(\frac{\mathbf{m}^+\otimes \mathbf{m}^+}{n^++1}\big)+\sigma^+R^+\nabla\Delta R^+\\
&+\mu^+{\rm div}\Big\{\frac{n^+}{\rho^+}\nabla\frac{\mathbf{m}^+}{n^++1}-\frac{1}{\rho^+}\nabla\frac{n^+\mathbf{m}^+}{n^++1}+\big(\frac{1}{\rho^+}-\frac{1}{\bar{\rho}^+}\big)\nabla \mathbf{m}^+\Big\} \\
&+\mu^+{\rm div}\Big\{\frac{n^+}{\rho^+}\nabla^T\frac{\mathbf{m}^+}{n^++1}-\frac{1}{\rho^+}\nabla^T\frac{n^+\mathbf{m}^+}{n^++1}+\big(\frac{1}{\rho^+}-\frac{1}{\bar{\rho}^+}\big)\nabla^T \mathbf{m}^+\Big\} \\
&+\lambda^+{\rm div}\Big\{\frac{n^+}{\rho^+}{\rm div}\frac{\mathbf{m}^+}{n^++1}I_3-\frac{1}{\rho^+}{\rm div}\frac{n^+\mathbf{m}^+}{n^++1}I_3+\Big(\frac{1}{\rho^+}-\frac{1}{\bar{\rho}^+}\Big){\rm div} \mathbf{m}^+I_3\Big\}\\
&-\underbrace{\bigg\{\mathcal{C}^{2}n^+\Big(\frac{\rho^-}{\rho^+}\nabla n^++\nabla n^-\Big)+\Big(\frac{\mathcal{C}^2\rho^-}{\rho^+}-\frac{\mathcal{C}^2\rho^-}{\rho^+}(1,1)\Big)\nabla n^++(\mathcal{C}^2-\mathcal{C}^2(1,1))\nabla n^-\bigg\}}_{\alpha^+\nabla P-(\beta_1\nabla n^++\beta_2\nabla n^-):=Q_1},
\end{array}
\end{equation}
and
\begin{equation}\label{2.3}
\arraycolsep=1.5pt
\begin{array}{rl}
F_{2}=&-\operatorname{div}\big(\frac{\mathbf{m}^-\otimes \mathbf{m}^-}{n^-+1}\big)+\sigma^-R^-\nabla\Delta R^-\\
&+\mu^-{\rm div}\Big\{\frac{n^-}{\rho^-}\nabla\frac{\mathbf{m}^-}{n^-+1}-\frac{1}{\rho^-}\nabla\frac{n^-\mathbf{m}^-}{n^-+1}+\big(\frac{1}{\rho^-}-\frac{1}{\bar{\rho}^-}\big)\nabla \mathbf{m}^-\Big\} \\
&+\mu^-{\rm div}\Big\{\frac{n^-}{\rho^-}\nabla^T\frac{\mathbf{m}^-}{n^-+1}-\frac{1}{\rho^-}\nabla^T\frac{n^-\mathbf{m}^-}{n^-+1}+\big(\frac{1}{\rho^-}-\frac{1}{\bar{\rho}^-}\big)\nabla^T \mathbf{m}^-\Big\} \\
&+\lambda^-{\rm div}\Big\{\frac{n^-}{\rho^-}{\rm div}\frac{\mathbf{m}^-}{n^-+1}I_3-\frac{1}{\rho^-}{\rm div}\frac{n^-\mathbf{m}^-}{n^-+1}I_3+\big(\frac{1}{\rho^-}-\frac{1}{\bar{\rho}^-}\big){\rm div} \mathbf{m}^-I_3\Big\}\\
&-\underbrace{\bigg\{\mathcal{C}^{2}n^-\big(\nabla n^++\frac{\rho^-}{\rho^+}\nabla n^-\big)+\big(\frac{\mathcal{C}^2\rho^+}{\rho^-}-\frac{\mathcal{C}^2\rho^+}{\rho^-}(1,1)\big)\nabla n^-+(\mathcal{C}^2-\mathcal{C}^2(1,1))\nabla n^+\bigg\}}_{\alpha^-\nabla P-(\beta_3\nabla n^++\beta_4\nabla n^-):=Q_2},
\end{array}
\end{equation}
where the last terms $Q_1$ and $Q_2$ in $F_1$ and $F_2$ have been given in (\ref{1.29}) in another simplified  form.

In terms of the semigroup theory for evolutionary equation, we 
investigate the following initial value problem for the
corresponding linear system of \eqref{2.1} on the unknown $U:=(n^+,\mathbf{m}^+,n^-,\mathbf{m}^-)$:
\begin{equation}
\begin{cases}
U_t=\mathcal BU,\\
U\big|_{t=0}={U}_0,
\end{cases}   \label{2.4}
\end{equation}
where the operator $\mathcal B$ is given by
\begin{equation}\nonumber\mathcal B=\begin{pmatrix}
0&-\text{div}&0&0\\
-\beta_1\nabla &\nu_{1}^{+} \Delta+\nu_{2}^{+} \nabla \otimes \nabla&-\beta_2\nabla&0\\
0&0&0&-\text{div}\\
-\beta_3\nabla&0&-\beta_4\nabla &\nu_{1}^{-}
\Delta+\nu_{2}^{-} \nabla \otimes \nabla
\end{pmatrix}.
\end{equation}
Applying Fourier transform to the system \eqref{2.4}, one has
\begin{equation}\label{2.5}
\begin{cases}
\widehat {{U}}_t=\mathcal B(\xi)\widehat {U},\\
\widehat {U}\big|_{t=0}=\widehat U_0=(\widehat {n^+_0}, \widehat{ \mathbf{m}^+_0},
\widehat {n^-_0}, \widehat {\mathbf{m}^-_0}),
\end{cases}
\end{equation}
where $\widehat
{U}(\xi,t)=\mathcal{F}(U(x,t))$,
$\xi=(\xi_1,\xi_2,\xi_3)^T$ and $\mathcal B(\xi)$ is defined by
\begin{equation}\label{2.6}
\mathcal B(\xi)=\begin{pmatrix}
0&-{i}\xi^T&0&0\\
-{i}\beta_1\xi&-\nu_1^+|\xi|^2{\mathbb I}_{3\times 3}-\nu_2^+\xi\otimes\xi&-{i}\beta_2\xi&0\\
0&0&0&- {i}\xi^T\\
- {i}\beta_3\xi&0&-
{i}\beta_4 &-\nu_1^-|\xi|^2{\mathbb I}_{3\times
3}-\nu_2^-\xi\otimes\xi
\end{pmatrix}.
\end{equation}
To facilitate narrative in later use, we also use the definition of Green's function $G(x,t)$  with the following standard form as our previous works:
\begin{equation}\label{2.6(1)}
\left\{\begin{array}{l}
G_{t}=\mathcal{A} G, \\
G|_{t=0}=\delta_0(x)\mathbb{I}_{8\times8}.
\end{array}\right.
\end{equation}

Since the system \eqref{2.4} has eight equations, we
take Hodge decomposition to system \eqref{2.4} such that it can be
decoupled into three systems. One has four equations, and the other
two are classic heat equations. This key observation allows us to
derive the optimal linear convergence rates.

To begin with, let $\varphi^{\pm}=\Lambda^{-1}{\rm
div}\mathbf{m}^{\pm}$ be the ``compressible part" of the momenta
$\mathbf{m}^{\pm}$, and denote $\phi^{\pm}=\Lambda^{-1}{\rm
curl}\mathbf{m}^{\pm}$ (with $({\rm curl} z)_i^j
=\partial_{x_j}z^i-\partial_{x_i}z^j$) by the ``incompressible part"
of the momenta $\mathbf{m}^{\pm}$. Then, we can divide the system
\eqref{2.4} into the compressible part
\begin{equation}\label{2.7}
\begin{cases}
\partial_t{n^+}+\Lambda{\varphi^+}=0,\\
\partial_t{\varphi^+}-\beta_1\Lambda{n^+}-\beta_2\Lambda{n^-}+\nu^+\Lambda^2{\varphi^+}=0,\\
\partial_t{n^-}+\Lambda{\varphi^-}=0,\\
\partial_t{\varphi^-}-\beta_3\Lambda{n^+}-\beta_4\Lambda{n^-}+\nu^-\Lambda^2{\varphi^-}=0,\\
(n^+, \varphi^+, n^-, \varphi^-)\big|_{t=0}=({n}^+_0, \Lambda^{-1}{\rm div}{\mathbf{m}}^{+}_0, {n}^-_0, \Lambda^{-1}{\rm div}{\mathbf{m}}^{-}_0)(x),\\
\end{cases}
\end{equation}
with its Green's function $\tilde{G}(x,t)$,
and the incompressible part
\begin{equation}\label{2.8}
\begin{cases}
\partial_t\phi^++\nu^+_1\Lambda^2\phi^+=0,\\
\partial_t\phi^-+\nu^-_1\Lambda^2\phi^-=0,\\
(\phi^+,\phi^-)\big|_{t=0}=(\Lambda^{-1}{\rm curl}{\mathbf{m}}^{+}_0,
\Lambda^{-1}{\rm curl}{\mathbf{m}}^{-}_0)(x)
\end{cases}
\end{equation}
where $\nu^{\pm}=\nu^{\pm}_1+\nu^{\pm}_2$. It is obvious that the incompressible part are two heat kernels:
\begin{eqnarray}\label{2.8(1)}
\hat{\phi}^+=e^{-\nu_1^+|\xi|^2t},\ \hat{\phi}^-=e^{-\nu_1^-|\xi|^2t},
\end{eqnarray}
and hence, we mainly focus on the compressible part of Green's function.

\subsection{Compressible part}

 In view of the semigroup
theory, we may represent the IVP \eqref{2.4} for $\mathcal
U=(n^+, \varphi^+, n^-, \varphi^-)^T$ as
\begin{equation}\label{2.9}
\begin{cases}
\mathcal U_t=\mathcal A\mathcal U,\\
\mathcal U\big|_{t=0}=\mathcal U_0,
\end{cases}
\end{equation}
where the operator $\mathcal A$ is defined by
\begin{equation}\nonumber\mathcal A=\begin{pmatrix}
0&-\Lambda&0&0\\
\beta_1\Lambda&-\nu^+\Lambda^2&\beta_2\Lambda&0\\
0&0&0&-\Lambda\\
\beta_3\Lambda&0&\beta_4\Lambda&-\nu^-\Lambda^2
\end{pmatrix}.
\end{equation}
Taking Fourier transform to system \eqref{2.9}, we obtain
\begin{equation}\label{2.10}
\begin{cases}
\widehat {\mathcal U}_t=\mathcal A(\xi)\widehat {\mathcal U},\\
\widehat {\mathcal U}\ \big|_{t=0}=\widehat {\mathcal U}_0,
\end{cases}
\end{equation}
where $\widehat {\mathcal U}(\xi,t)$ is the Fourier transform of ${\mathcal U}(x,t)$
and $\mathcal A(\xi)$ is given by
\begin{equation}\label{2.11}
\mathcal A(\xi)=\begin{pmatrix}
0&-|\xi|&0&0\\
\beta_1|\xi|&-\nu^+|\xi|^2&\beta_2|\xi|&0\\
0&0&0&-|\xi|\\
\beta_3|\xi|&0&\beta_4|\xi|&-\nu^-|\xi|^2
\end{pmatrix}.
\end{equation}
Its eigenvalues satisfy
\begin{equation}\label{2.12}
\begin{array}{rl}
{\rm det}(\lambda{\rm I}-\mathcal A(\xi))
=&\lambda^4+(\nu^+|\xi|^2+\nu^-|\xi|^2)\lambda^3+(\beta_1|\xi|^2+\beta_4|\xi|^2+\nu^+\nu^-|\xi|^4)\lambda^2\\
&\quad+(\beta_1\nu^-|\xi|^4+\beta_4\nu^+|\xi|^4)\lambda+(\beta_1\beta_4-\beta_2\beta_3)|\xi|^4\\
=&0,
\end{array}
\end{equation}
which implies that the matrix $\mathcal A(\xi)$ possesses four different
eigenvalues:
\begin{equation}\nonumber
 \lambda_1=\lambda_1(|\xi|),\quad \lambda_2=\lambda_2(|\xi|),\quad
 \lambda_3=\lambda_3(|\xi|),\quad \lambda_4=\lambda_4(|\xi|).
\end{equation}
Consequently, the semigroup $e^{t\mathcal A}$ can be decomposed
into
\begin{equation}\label{2.21}
 \text{e}^{t\mathcal A(\xi)}=\sum_{i=1}^4\text{e}^{\lambda_it}P^i(\xi),
\end{equation}
where the projector $P^i(\xi)$ is defined by
\begin{equation}\label{2.22}
 P^i(\xi)=\prod_{j\neq i}\frac{\mathcal A(\xi)-\lambda_jI}{\lambda_i-\lambda_j}, \quad i,j=1,2,3,4.
\end{equation}
Thus, the solution of IVP \eqref{2.10} can be expressed as
\begin{equation}\label{2.23}
\widehat {\mathcal U}(\xi,t)=\text{e}^{t\mathcal A(\xi)}\widehat {\mathcal U}_0(\xi)=\left(\sum_{i=1}^4
\text{e}^{\lambda_it}P^i(\xi)\right)\widehat {\mathcal U}_0(\xi).
\end{equation}

\vspace{3.8mm}
\textbf{\textit{Low frequency part.}}\vspace{2mm}

From discriminant of the quartic equation in one unknown, we know that when $|\xi|\ll1$ there exist two different real roots and a pair of conjugate imaginary roots. Then, by using the method of undetermined coefficients, we have the following for the spectral in the low frequency part:
\begin{lemma}\label{lemma2.1}
There exists a positive constants $\eta_1\ll 1 $ such that, for
$|\xi|\leq \eta_1$, the spectral has the following Taylor series
expansion:
\begin{equation}\label{2.24}
\left\{\begin{array}{lll}\displaystyle \lambda_{1}=\overline{\lambda_2}=-\left[\frac{\nu^++\nu^-}{4}-\frac{(\nu^+-\nu^-)(\beta_1-\beta_4)}{8\kappa_1}\right]|\xi|^2+{\rm i}\sqrt{\kappa_2-\kappa_1}|\xi|+\mathcal O(|\xi|^3),\\[3mm]
\displaystyle  \lambda_3=\overline{\lambda_4}=-\left[\frac{\nu^++\nu^-}{4}+\frac{(\nu^+-\nu^-)(\beta_1-\beta_4)}{8\kappa_1}\right]|\xi|^2
+{\rm i}\sqrt{\kappa_2+\kappa_1}|\xi|+\mathcal O(|\xi|^3),
\end{array}\right.
\end{equation}
where $\kappa_1=\sqrt{\frac{(\beta_1+\beta_4)^2}{4}-(\beta_1\beta_4-\beta_2\beta_3)}$, $\displaystyle\kappa_2=\frac{\beta_1+\beta_4}{2}$ and $\beta_i\ (i=1,2,3,4)$ are defined in (\ref{2.1(1)}).
\end{lemma}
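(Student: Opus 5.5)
We rescale the spectral variable and treat the low-frequency regime as a regular perturbation of the inviscid (hyperbolic) problem. Set $\varepsilon=|\xi|$ and $\lambda=\varepsilon\mu$. Dividing \eqref{2.12} by $\varepsilon^4$ gives
\begin{equation*}
p(\mu,\varepsilon):=\mu^4+(\nu^++\nu^-)\varepsilon\mu^3+\big(\beta_1+\beta_4+\nu^+\nu^-\varepsilon^2\big)\mu^2+(\beta_1\nu^-+\beta_4\nu^+)\varepsilon\mu+(\beta_1\beta_4-\beta_2\beta_3)=0 .
\end{equation*}
This polynomial is real and analytic in $(\mu,\varepsilon)$. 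At $\varepsilon=0$ it is biquadratic, $\mu^4+2\kappa_2\mu^2+(\beta_1\beta_4-\beta_2\beta_3)=0$, so $\mu^2=-\kappa_2\pm\kappa_1$.

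\textbf{Step 1: the four roots at $\varepsilon=0$ are simple and purely imaginary.} Write
\begin{equation*}
\kappa_1^2=\frac{(\beta_1-\beta_4)^2}{4}+\beta_2\beta_3 .
\end{equation*}
From \eqref{1.23} we have $\beta_3=\mathcal C^2(1,1)>0$. The left inequality in \eqref{1.12} gives $1+\alpha^-f'(1)/s_-^2>0$, hence $\beta_2>0$, and therefore $\kappa_1>0$. By \eqref{2.1(1)} we have $\kappa_2^2-\kappa_1^2=\beta_1\beta_4-\beta_2\beta_3>0$, and since $\kappa_2>0$ this gives $\kappa_2>\kappa_1$. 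Consequently the roots are $\mu=\pm{\rm i}\sqrt{\kappa_2-\kappa_1}$ and $\mu=\pm{\rm i}\sqrt{\kappa_2+\kappa_1}$. These are four distinct nonzero points, so $\partial_\mu p\neq0$ at each of them.

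\textbf{Step 2: analytic continuation in $\varepsilon$.} By the implicit function theorem there is $\eta_1\ll1$ such that, for $|\varepsilon|\le\eta_1$, there are four analytic branches $\mu_j(\varepsilon)=\mu_j(0)+\mu_j'(0)\varepsilon+\mathcal O(\varepsilon^2)$. These exhaust the roots, and they stay distinct because $\eta_1$ is small. Since $p$ has real coefficients for real $\varepsilon$, the branch starting at ${\rm i}a$ is the conjugate of the branch starting at $-{\rm i}a$. This yields the pairings $\lambda_1=\overline{\lambda_2}$ and $\lambda_3=\overline{\lambda_4}$. Multiplying by $\varepsilon$ gives $\lambda_j=\mu_j(0)|\xi|+\mu_j'(0)|\xi|^2+\mathcal O(|\xi|^3)$.

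\textbf{Step 3: compute the first-order correction.} The coefficient is $\mu'(0)=-\partial_\varepsilon p/\partial_\mu p$ evaluated at $\varepsilon=0$. Dividing numerator and denominator by $\mu$ and writing $s=\mu^2$,
\begin{equation*}
\mu'(0)=-\frac{(\nu^++\nu^-)s+\beta_1\nu^-+\beta_4\nu^+}{4s+2(\beta_1+\beta_4)}=-\frac{(\nu^++\nu^-)s+\beta_1\nu^-+\beta_4\nu^+}{4(s+\kappa_2)} .
\end{equation*}
We use the identity
\begin{equation*}
-(\nu^++\nu^-)\kappa_2+\beta_1\nu^-+\beta_4\nu^+=-\tfrac12(\nu^+-\nu^-)(\beta_1-\beta_4).
\end{equation*}
For $s=-\kappa_2+\kappa_1$, so that $s+\kappa_2=\kappa_1$, this gives
\begin{equation*}
\mu'(0)=-\frac{\nu^++\nu^-}{4}+\frac{(\nu^+-\nu^-)(\beta_1-\beta_4)}{8\kappa_1}.
\end{equation*}
For $s=-\kappa_2-\kappa_1$ the same computation gives the corresponding formula with the opposite sign in front of the $1/(8\kappa_1)$ term. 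The result is real, so it is the same for both roots in each conjugate pair. This is exactly \eqref{2.24}.

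The main point needing care is Step 1: the stability condition must guarantee that the four inviscid roots are simple and lie on the imaginary axis ($\kappa_1>0$ and $\kappa_2>\kappa_1$). Without this the expansion would be a Puiseux expansion in fractional powers of $\varepsilon$ rather than a Taylor series. The rest is a routine implicit-function computation.
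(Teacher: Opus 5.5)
Your proof is correct and follows essentially the paper's route. The paper obtains \eqref{2.24} by the method of undetermined coefficients applied to \eqref{2.12}. Your rescaling $\lambda=\varepsilon\mu$ followed by $\mu'(0)=-\partial_\varepsilon p/\partial_\mu p$ is the same coefficient matching at order $\varepsilon$. In addition, your implicit-function argument supplies the justification the paper leaves implicit: the four simple, purely imaginary inviscid roots, which also shows the roots form two conjugate pairs rather than the ``two real roots'' mentioned in the paper's text. The only unstated detail is $\kappa_2>0$, which follows from $\beta_1>0$ and $\beta_1\beta_4>\beta_2\beta_3>0$.
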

Note that all of the real parts of $\lambda_i$ with $i=1,2,3,4$ in Lemma \ref{lemma2.1} are positive due to the fact $\beta_1\beta_4-\beta_2\beta_3>0$ according to the condition (\ref{1.12}). Additionally, we denote the propagation speeds ${\rm c}_1\triangleq\sqrt{\kappa_2-\kappa_1}<\sqrt{\kappa_2+\kappa_1}\triangleq{\rm c}_2$ for later use.

By virtue of formula \eqref{2.23} and Taylor series expansion of $\lambda_i$ $(1\leq i\leq 4)$ in \eqref{2.24}, we can represent the low frequency of $P^i$ as follows:
{\small \begin{equation}\label{2.26}\begin{split}P^{1,l}(\xi)=&\begin{pmatrix}
\frac{\kappa_1+\kappa_2-\beta_1}{4\kappa_1}&
\frac{\kappa_1+\kappa_2-\beta_1}{4\kappa_1\sqrt{\kappa_2-\kappa_1}}{\rm{i}}&-\frac{\beta_2}{4\kappa_1}
&\frac{\beta_2}{4\kappa_1\sqrt{\kappa_2-\kappa_1}}{\rm{i}}\\
\frac{\beta_1^2+\beta_2\beta_3-\beta_1(\kappa_1+\kappa_2)}{4\kappa_1
\sqrt{\kappa_2-\kappa_1}}{\rm{i}} & \frac{\kappa_1+\kappa_2-\beta_1}{4\kappa_1} & \frac{\beta_1\beta_2+\beta_2\beta_4-\beta_2(\kappa_1+\kappa_2)}{4\kappa_1\sqrt{\kappa_2-\kappa_1}}{\rm{i}}&\frac{-\beta_2}{4\kappa_1}\\
\frac{-\beta_3}{4\kappa_1}& -\frac{\beta_3}{4\kappa_1\sqrt{\kappa_2-\kappa_1}}{\rm{i}}& \frac{\kappa_1+\kappa_2-\beta_4}{4\kappa_1} & \frac{\kappa_1+\kappa_2-\beta_4}{4\kappa_1\sqrt{\kappa_2-\kappa_1}}{\rm{i}}\\
\frac{\beta_1\beta_3+\beta_3\beta_4-\beta_3(\kappa_1+\kappa_2)}{4\kappa_1\sqrt{\kappa_2-\kappa_1}}{\rm{i}} &-\frac{\beta_3}{4\kappa_1} & \frac{\beta_2\beta_3+\beta_4^2-\beta_4(\kappa_1+\kappa_2)}{4\kappa_1\sqrt{\kappa_2-\kappa_1}}{\rm{i}}
&\frac{\kappa_1+\kappa_2-\beta_4}{4\kappa_1}
\end{pmatrix}+\mathcal O(|\xi|),\end{split}\end{equation}

\begin{equation}\label{2.27}\begin{split}P^{2,l}(\xi)=&\begin{pmatrix}
\frac{\kappa_1+\kappa_2-\beta_1}{4\kappa_1} \!&\!
-\frac{\kappa_1+\kappa_2-\beta_1}{4\kappa_1\sqrt{\kappa_2-\kappa_1}}{\rm{i}} \!&\! -\frac{\beta_2}{4\kappa_1}
\!&\!-\frac{\beta_2}{4\kappa_1\sqrt{\kappa_2-\kappa_1}}{\rm{i}}\\
-\frac{\beta_1^2+\beta_2\beta_3-\beta_1(\kappa_1+\kappa_2)}{4\kappa_1
\sqrt{\kappa_2-\kappa_1}}{\rm{i}} \!&\! \frac{\kappa_1+\kappa_2-\beta_1}{4\kappa_1}\! &\! -\frac{\beta_1\beta_2+\beta_2\beta_4-\beta_2(\kappa_1+\kappa_2)}{4\kappa_1\sqrt{\kappa_2-\kappa_1}}{\rm{i}} \!&\! \frac{-\beta_2}{4\kappa_1}\\
\frac{-\beta_3}{4\kappa_1}\!&\! -\frac{\beta_3}{4\kappa_1\sqrt{\kappa_2-\kappa_1}}{\rm{i}}\!&\! \frac{\kappa_1+\kappa_2-\beta_4}{4\kappa_1} \!&\! -\frac{\kappa_1+\kappa_2-\beta_4}{4\kappa_1\sqrt{\kappa_2-\kappa_1}}{\rm{i}}\\
-\frac{\beta_1\beta_3+\beta_3\beta_4-\beta_3(\kappa_1+\kappa_2)}{4\kappa_1\sqrt{\kappa_2-\kappa_1}}{\rm{i}} &-\frac{\beta_3}{4\kappa_1} \!&\! -\frac{\beta_2\beta_3+\beta_4^2-\beta_4(\kappa_1+\kappa_2)}{4\kappa_1\sqrt{\kappa_2-\kappa_1}}{\rm{i}}
\!&\!\frac{\kappa_1+\kappa_2-\beta_4}{4\kappa_1}
\end{pmatrix}+\mathcal O(|\xi|),\end{split}\end{equation}

\begin{equation}\label{2.28}\begin{split}P^{3,l}(\xi)=&\begin{pmatrix}
\frac{\beta_1-(\kappa_2-\kappa_1)}{4\kappa_1}&
\frac{\beta_1-(\kappa_2-\kappa_1)}{4\kappa_1\sqrt{\kappa_2+\kappa_1}}{\rm{i}}&\frac{\beta_2}{4\kappa_1}
&\frac{\beta_2}{4\kappa_1\sqrt{\kappa_2+\kappa_1}}{\rm{i}}\\
-\frac{\beta_1^2+\beta_2\beta_3-\beta_1(\kappa_2-\kappa_1)}{4\kappa_1
\sqrt{\kappa_2+\kappa_1}}{\rm{i}} & \frac{\beta_1-(\kappa_2-\kappa_1)}{4\kappa_1} & -\frac{\beta_1\beta_2+\beta_2\beta_4-\beta_2(\kappa_2-\kappa_1)}{4\kappa_1\sqrt{\kappa_2+\kappa_1}}{\rm{i}}&
\frac{\beta_2}{4\kappa_1}\\
\frac{\beta_3}{4\kappa_1}& \frac{\beta_3}{4\kappa_1\sqrt{\kappa_2+\kappa_1}}{\rm{i}}& \frac{\beta_4-(\kappa_2-\kappa_1)}{4\kappa_1} & \frac{\beta_4-(\kappa_2-\kappa_1)}{4\kappa_1\sqrt{\kappa_2+\kappa_1}}{\rm{i}}\\
-\frac{\beta_1\beta_3+\beta_3\beta_4-\beta_3(\kappa_2-\kappa_1)}{4\kappa_1\sqrt{\kappa_2+\kappa_1}}{\rm{i}} &\frac{\beta_3}{4\kappa_1} & -\frac{\beta_2\beta_3+\beta_4^2-\beta_4(\kappa_2-\kappa_1)}{4\kappa_1\sqrt{\kappa_2+\kappa_1}}{\rm{i}}
&\frac{\beta_4-(\kappa_2-\kappa_1)}{4\kappa_1}
\end{pmatrix}+\mathcal O(|\xi|),\end{split}\end{equation}

and
\begin{equation}\label{2.29}\begin{split}P^{4,l}(\xi)=&\begin{pmatrix}
\frac{\beta_1-(\kappa_2-\kappa_1)}{4\kappa_1}&
-\frac{\beta_1-(\kappa_2-\kappa_1)}{4\kappa_1\sqrt{\kappa_2+\kappa_1}}{\rm{i}}&\frac{\beta_2}{4\kappa_1}
&-\frac{\beta_2}{4\kappa_1\sqrt{\kappa_2+\kappa_1}}{\rm{i}}\\
\frac{\beta_1^2+\beta_2\beta_3-\beta_1(\kappa_2-\kappa_1)}{4\kappa_1
\sqrt{\kappa_2+\kappa_1}}{\rm{i}} & \frac{\beta_1-(\kappa_2-\kappa_1)}{4\kappa_1} & \frac{\beta_1\beta_2+\beta_2\beta_4-\beta_2(\kappa_2-\kappa_1)}{4\kappa_1\sqrt{\kappa_2+\kappa_1}}{\rm{i}}&
\frac{\beta_2}{4\kappa_1}\\
\frac{\beta_3}{4\kappa_1}& -\frac{\beta_3}{4\kappa_1\sqrt{\kappa_2+\kappa_1}}{\rm{i}}& \frac{\beta_4-(\kappa_2-\kappa_1)}{4\kappa_1} & -\frac{\beta_4-(\kappa_2-\kappa_1)}{4\kappa_1\sqrt{\kappa_2+\kappa_1}}{\rm{i}}\\
\frac{\beta_1\beta_3+\beta_3\beta_4-\beta_3(\kappa_2-\kappa_1)}{4\kappa_1\sqrt{\kappa_2+\kappa_1}}{\rm{i}} &\frac{\beta_3}{4\kappa_1} & \frac{\beta_2\beta_3+\beta_4^2-\beta_4(\kappa_2-\kappa_1)}{4\kappa_1\sqrt{\kappa_2+\kappa_1}}{\rm{i}}
&\frac{\beta_4-(\kappa_2-\kappa_1)}{4\kappa_1}
\end{pmatrix}+\mathcal O(|\xi|).\end{split}\end{equation}


\vspace{2.5mm}
{\bf High frequency part.}

 From the  characteristic equation (\ref{2.12}), we have the following expansion of spectra in the high frequency.
\begin{lemma}\label{l 2.2} There exists a positive constant  $K \gg 1$  such that, for  $|\xi| \geq K$, the spectrum has the following Taylor series expansion:
\begin{equation}\label{2.35}
\begin{cases}\lambda_{1}=-\frac{(\beta_1\nu^-+\beta_4\nu^+)+\sqrt{(\beta_1\nu^-+\beta_4\nu^+)^2-4\nu^+\nu^-(\beta_1\beta_4-\beta_2\beta_3)}}{2\nu^+\nu^-}+\mathcal O(1),\\[2mm]
\lambda_{2}=-\nu^+|\xi|^2+\mathcal O(1),\\[2mm]
\lambda_{3}=-\frac{(\beta_1\nu^-+\beta_4\nu^+)-\sqrt{(\beta_1\nu^-+\beta_4\nu^+)^2-4\nu^+\nu^-(\beta_1\beta_4-\beta_2\beta_3)}}{2\nu^+\nu^-}+\mathcal O(1),\\[2mm]
\lambda_{4}=-\nu^-|\xi|^2+\mathcal O(1).
\end{cases}
\end{equation}
\end{lemma}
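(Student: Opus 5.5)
The plan is to treat the characteristic polynomial \eqref{2.12} as a singular perturbation problem in the small parameter $\varepsilon:=|\xi|^{-2}$, $|\xi|\ge K$. Two different rescalings will separate the two \emph{parabolic} eigenvalues of size $|\xi|^2$ from the two \emph{bounded} eigenvalues. In each regime I will apply the implicit function theorem, or Rouché's theorem if a root is degenerate, to the limiting polynomial at $\varepsilon=0$.

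\emph{Step 1 (large eigenvalues).} Substitute $\lambda=|\xi|^2\mu$ into \eqref{2.12} and divide by $|\xi|^8$. This gives
\begin{equation*}
\mu^4+(\nu^++\nu^-)\mu^3+\big(\nu^+\nu^-+(\beta_1+\beta_4)\varepsilon\big)\mu^2+(\beta_1\nu^-+\beta_4\nu^+)\varepsilon^2\mu+(\beta_1\beta_4-\beta_2\beta_3)\varepsilon^2=0 .
\end{equation*}
At $\varepsilon=0$ this reduces to $\mu^2(\mu+\nu^+)(\mu+\nu^-)=0$. Assuming $\nu^+\neq\nu^-$, the roots $-\nu^\pm$ are simple. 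The polynomial depends analytically (indeed polynomially) on $\varepsilon$, so the implicit function theorem gives two roots $\mu=-\nu^\pm+\mathcal O(\varepsilon)$ for $\varepsilon$ small. Undoing the scaling yields $\lambda_2=-\nu^+|\xi|^2+\mathcal O(1)$ and $\lambda_4=-\nu^-|\xi|^2+\mathcal O(1)$. If instead $\nu^+=\nu^-$, the double root must be handled by a Puiseux/Rouché argument. A direct computation of the $\mathcal O(\varepsilon)$ correction still gives an $\mathcal O(1)$ error in $\lambda$, which is all that is claimed.

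\emph{Step 2 (bounded eigenvalues).} Keep $\lambda$ unscaled and divide \eqref{2.12} by $|\xi|^4$:
\begin{equation*}
\varepsilon^2\lambda^4+(\nu^++\nu^-)\varepsilon\lambda^3+\big(\nu^+\nu^-+(\beta_1+\beta_4)\varepsilon\big)\lambda^2+(\beta_1\nu^-+\beta_4\nu^+)\lambda+(\beta_1\beta_4-\beta_2\beta_3)=0 .
\end{equation*}
At $\varepsilon=0$ this is the quadratic
\begin{equation*}
\nu^+\nu^-\lambda^2+(\beta_1\nu^-+\beta_4\nu^+)\lambda+(\beta_1\beta_4-\beta_2\beta_3)=0,
\end{equation*}
whose roots are exactly the leading constants in $\lambda_1,\lambda_3$ in \eqref{2.35}. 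By \eqref{2.1(1)}, both roots have negative real part and neither is zero. If the discriminant is nonzero they are simple, and the implicit function theorem gives two roots equal to these constants plus $\mathcal O(\varepsilon)\subset\mathcal O(1)$. In the degenerate case, Rouché's theorem on a small fixed circle around the double root still places two roots within $o(1)$ of it, which is again consistent with the $\mathcal O(1)$ remainder.

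\emph{Step 3 (counting).} The quartic has exactly four roots. Those found in Step 1 have modulus of order $|\xi|^2$, and those found in Step 2 are bounded, so the two sets are disjoint and exhaust the spectrum once $K$ is large. Choosing $K$ so that all the perturbative arguments apply completes the proof.

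I expect the main obstacle to be Step 1's dependence on $\nu^+\neq\nu^-$, and more generally the bookkeeping needed to show that the two scalings capture all four roots uniformly for $|\xi|\ge K$. The remainder is routine.
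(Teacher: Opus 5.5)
Your two-scale argument is the natural rigorous version of what the paper does. The paper gives no proof and simply reads the expansion off \eqref{2.12}. For $\nu^+\neq\nu^-$ your argument works: Step 2, the Rouch\'e fallback for a double root of the limiting quadratic, and the counting in Step 3 are all fine. There is, however, a typo in Step 1. The coefficient of $\mu$ in the rescaled quartic is $(\beta_1\nu^-+\beta_4\nu^+)\varepsilon$, not $(\beta_1\nu^-+\beta_4\nu^+)\varepsilon^2$, because the term $(\beta_1\nu^-+\beta_4\nu^+)|\xi|^4\lambda$ becomes $(\beta_1\nu^-+\beta_4\nu^+)|\xi|^6\mu$ before you divide by $|\xi|^8$. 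This does not change the limit $\mu^2(\mu+\nu^+)(\mu+\nu^-)$, but it is exactly what matters in the degenerate case.

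The case $\nu^+=\nu^-=\nu$ is a genuine gap as written. Call the rescaled quartic $P(\mu,\varepsilon)$. At a double root of the limiting polynomial, a Puiseux/Rouch\'e argument in general only gives $\mu=-\nu+\mathcal O(\varepsilon^{1/2})$, i.e.\ $\lambda=-\nu|\xi|^2+\mathcal O(|\xi|)$, which is weaker than the claimed $\mathcal O(1)$. With your mistyped polynomial one indeed finds $P(-\nu,\varepsilon)=(\beta_1+\beta_4)\nu^2\varepsilon+\mathcal O(\varepsilon^2)$, and the two roots really do split at order $\varepsilon^{1/2}$.

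What rescues the statement is a cancellation you must exhibit. Equation \eqref{2.12} factors as $(\lambda^2+\nu^+|\xi|^2\lambda+\beta_1|\xi|^2)(\lambda^2+\nu^-|\xi|^2\lambda+\beta_4|\xi|^2)=\beta_2\beta_3|\xi|^4$. So for $\nu^\pm=\nu$ the correct rescaled quartic is $u^2+(\beta_1+\beta_4)\varepsilon u+(\beta_1\beta_4-\beta_2\beta_3)\varepsilon^2$ with $u=\mu(\mu+\nu)$. Hence $u=\varepsilon\omega$, where $\omega$ solves $\omega^2+(\beta_1+\beta_4)\omega+\beta_1\beta_4-\beta_2\beta_3=0$, i.e.\ $\omega\in\{-{\rm c}_1^2,-{\rm c}_2^2\}$. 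Near $\mu=-\nu$ this gives $\mu=-\nu-\varepsilon\omega/\nu+\mathcal O(\varepsilon^2)$, that is, $\lambda=-\nu|\xi|^2+{\rm c}_j^2/\nu+\mathcal O(|\xi|^{-2})$ for $j=1,2$. Equivalently, $P(-\nu,\varepsilon)=(\beta_1\beta_4-\beta_2\beta_3)\varepsilon^2$, so the Newton polygon forces splitting at order $\varepsilon$. With this inserted, your proof covers all $\nu^\pm>0$.
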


After a direct and tedious computation, we can get the following expansion of the projection $P^i$ with $i=1,2,3,4$ in the high frequency according to Lemma \ref{l 2.2}:
\begin{align}\label{2.36}
P^{i}(\xi)=&\begin{pmatrix}
\mathcal{O}(1)&\mathcal{O}(|\xi|^{-1})&\mathcal{O}(1)&\mathcal{O}(|\xi|^{-1})\\
\mathcal{O}(|\xi|^{-1})&\mathcal{O}(1)&\mathcal{O}(|\xi|^{-1})&\mathcal{O}(1)\\
\mathcal{O}(1)&\mathcal{O}(|\xi|^{-1})&\mathcal{O}(1)&\mathcal{O}(|\xi|^{-1})\\
\mathcal{O}(|\xi|^{-1})&\mathcal{O}(1)&\mathcal{O}(|\xi|^{-1})&\mathcal{O}(1)
\end{pmatrix}+\cdots.
\end{align}
One can see that the high frequency of this two-fluid model is consistent with the classical compressible insentropic Navier-Stokes equations in \cite{Ls,Lw}, that is, the residual part of the high-frequency component of Green's function, with its singularity removed, satisfies the following spacetime estimate, while the singularity of the high-frequency component can be characterized by the delta function or the heat kernel. In fact, from (\ref{2.35})-(\ref{2.36}), we find that there exist the terms like $\mathcal{O}(1)e^{-t/C}$, $\mathcal{O}(|\xi|^{-1})e^{-t/C}$ and $\mathcal{O}(1)e^{-\frac{|\xi|^2t}{C}}$ in the high frequency of Green's function, then according to Lemma \ref{A.2}, we have
\begin{lemma}\label{l 2.2(1)}
For any integer $N>0$, the high frequency $\tilde{G}^h(x,t)$ also satisfies
\begin{equation}\label{2.37}
|\tilde{G}^h(x,t)-G_{S}|\lesssim e^{-t/C}(1+|x|^2)^{-N}.
\end{equation}
Here the singular term $G_S$ contains two parts: $G_{S1}=e^{-t/C}(C_0\delta(x)+f(x))$ with $\|f\|_{L^1}\leq C,\ {\rm supp}f(x)\subset\{x;|x|<\eta_0\ll1\}$, and $G_{S2}=e^{-t/C}t^{-\frac{3}{2}}e^{-\frac{|x|^2}{t}}$.
\end{lemma}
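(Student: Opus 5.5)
We split the high-frequency Green's function by the spectral representation \eqref{2.21}--\eqref{2.22} restricted to $|\xi|\ge K$, i.e. $\hat{\tilde G}^h(\xi,t)=\chi_K(\xi)\sum_{i=1}^4 e^{\lambda_i t}P^i(\xi)$, where $\chi_K$ is a smooth cutoff supported in $|\xi|\ge K$. By Lemma \ref{l 2.2}, the four branches fall into two groups. The eigenvalues $\lambda_1,\lambda_3$ tend to negative constants. The eigenvalues $\lambda_2,\lambda_4$ behave like $-\nu^\pm|\xi|^2+\mathcal O(1)$.

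We will expand each projector in \eqref{2.36} further in powers of $|\xi|^{-1}$, as far as needed, say to order $|\xi|^{-2N'}$ with $N'$ large. We do the same for the eigenvalues, writing $\lambda_{1,3}=\lambda_{1,3}^\infty+a_{1,3}|\xi|^{-2}+\cdots$ and $\lambda_{2,4}=-\nu^\pm|\xi|^2+b_{2,4}+\cdots$. All these are smooth symbols in $|\xi|$ for $|\xi|\ge K$, since the characteristic polynomial \eqref{2.12} has simple roots there.

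\textbf{Parabolic branches $i=2,4$.} We factor $e^{\lambda_i t}=e^{-\nu^\pm|\xi|^2 t}e^{(\lambda_i+\nu^\pm|\xi|^2)t}$, where the second factor is bounded by $e^{Ct}$. This growth is absorbed for $K$ large, since $e^{-\nu|\xi|^2t/2}\le e^{-\nu K^2 t/2}$. We then obtain $e^{-\frac{\nu}{2}|\xi|^2t}\cdot e^{-t/C}$ times bounded symbols of order $0$ or $-1$. Applying Lemma \ref{A.2}, the leading order-$0$ part gives the heat-kernel singular term $G_{S2}=e^{-t/C}t^{-3/2}e^{-|x|^2/t}$, uniformly for small $t$. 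The remainder, cut off away from the origin, has symbols that decay in $|\xi|$ together with all $\xi$-derivatives, each derivative gaining $|\xi|^{-1}$. Integrating by parts $2N$ times in $\xi$ after pairing with the factor $(1+|x|^2)^{N}$ then yields the bound $e^{-t/C}(1+|x|^2)^{-N}$.

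\textbf{Bounded branches $i=1,3$.} Here $e^{\lambda_i t}=e^{\lambda_i^\infty t}\bigl(1+\mathcal O(t|\xi|^{-2})\bigr)$ with $\lambda_i^\infty<0$, because $\beta_1\beta_4>\beta_2\beta_3$ and all the $\beta$'s are positive, as guaranteed by \eqref{1.12}. The constant part of the symbol $P^i$ (the $\mathcal O(1)$ entries) times $\chi_K$ gives $C_0\delta(x)$ minus the inverse transform of $(1-\chi_K)$. The latter is Schwartz, hence harmless. The $\mathcal O(|\xi|^{-1})$ and $\mathcal O(|\xi|^{-2})$ entries produce kernels with integrable singularities at $x=0$, of the type $|x|^{-2}$ and $|x|^{-1}$, which are not smooth.

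We therefore localize them. Using a cutoff $\psi(x)$ supported in $|x|<\eta_0$, the localized piece $\psi\cdot\mathcal F^{-1}[\cdot]$ is the $L^1$ function $f$ in $G_{S1}$. The exterior piece $(1-\psi)\mathcal F^{-1}[\cdot]$ decays faster than any power by repeated integration by parts in $\xi$, since all $\xi$-derivatives beyond order $3$ are integrable. Symbol terms of order $\le -4$ are handled directly by the same integration by parts, and the time factor is always $e^{-t/C}$.

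Summing the two groups gives \eqref{2.37}. The main obstacle we expect is controlling the $\xi$-derivatives of $P^i$ and $\lambda_i$ uniformly for $|\xi|\ge K$ with the correct symbol orders, so that the integration by parts is legitimate. We plan to handle this by treating \eqref{2.12} in the variable $s=|\xi|^{-1}$ near $s=0$ and applying the implicit function theorem analytically in $s$. This makes each quantity a convergent series in $s$ (after rescaling the $\lambda_{2,4}$ branch by $s^2$), so derivative bounds follow from analyticity.
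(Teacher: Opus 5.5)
Your overall route is the paper's. There, the high-frequency symbols are sorted into the types $\mathcal O(1)e^{-t/C}$, $\mathcal O(|\xi|^{-1})e^{-t/C}$ and $\mathcal O(1)e^{-|\xi|^2t/C}$, and the Wang--Yang decomposition is invoked. That is Lemma~\ref{A.4}; the ``Lemma~\ref{A.2}'' cited in the text, and by you, concerns $\nabla\mathrm{div}\Delta^{-1}$ and does not produce heat kernels. You instead re-derive the Wang--Yang decomposition by hand from an analytic expansion in $s=|\xi|^{-1}$, and you justify $\lambda_{1,3}^\infty<0$ from \eqref{1.12}, which the paper leaves implicit. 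The $G_{S2}$ term should come from inverting $\chi_K e^{-\nu^\pm|\xi|^2t}$ explicitly for $t\le1$ (heat kernel minus a Schwartz correction). For $t\ge1$ no singular term is needed.

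One step fails as written, in the parabolic branches. The cross-block entries of $P^2,P^4$ are genuinely of order $-1$ (cf.\ \eqref{2.36}), and for such symbols the heat factor gives nothing uniform as $t\to0$. Indeed $\int_{|\xi|\ge K}|\xi|^{-1}e^{-\nu|\xi|^2t}\,d\xi=\frac{2\pi}{\nu t}e^{-\nu K^2t}$, and the kernel behaves like $\min\{|x|^{-2},t^{-1}\}$ near $x=0$. Consequences:
\begin{enumerate}
\item Pairing with $(1+|x|^2)^N$ and integrating by parts cannot yield $e^{-t/C}(1+|x|^2)^{-N}$, because the zeroth-order term of $(1-\Delta_\xi)^N$ is not uniformly integrable.
\item The kernel is not dominated by $G_{S2}$ either: for $\sqrt t\ll|x|\ll K^{-1}$ it has size $|x|^{-2}$, while $G_{S2}$ is exponentially small there.
\end{enumerate}

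These pieces, together with the order $-2$ and $-3$ ones, must be sent to $f$ exactly as in your bounded branch. Cut off with $\psi$ and use integration by parts only on $\mathrm{supp}(1-\psi)$. Bound the interior part by $e^{-t/C}|x|^{-2}$ uniformly in $t$: for $t\le1$ it is an order-$(-1)$ kernel, $\lesssim|x|^{-2}$ near $0$, convolved with a heat kernel; for $t\ge1$ the whole symbol is integrable with an $e^{-t/C}$ factor. Since $f$ in the statement is time-independent, take a common majorant such as $f=C|x|^{-2}\psi$ rather than the $t$-dependent $\psi\cdot\mathcal F^{-1}[\cdots]$. With this repair your argument proves the lemma.
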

\begin{remark}\label{r 2.1} The incompressible part of the system (\ref{1.1}) obeys two decoupled heat equations,  which together with the compressible part in Lemma \ref{l 2.2(1)} yields that the high frequency of Green's function $G(x,t)$ of the full system (\ref{1.1}) has the same estimates as in Lemma \ref{l 2.2(1)}. Besides, since $G_{S2}$ is like the heat kernel with exponential decay rate, it is evident that $G_{S2}$ can be treated the same as  $G_{S1}$ when handling the convolution of the singular part with the initial value or the nonlinear term. This will not be repeated hereafter.

\end{remark}

\vspace{2.5mm}
{\bf Middle frequency part.}

\vspace{2.5mm}
For the middle frequency part, we only need to prove the following two lemmas. The first one is about the positivity of the real part of spectra based on the Routh-Hurwitz theorem.
\begin{lemma}\label{l 2.3}
When $\eta_1\leq|\xi|\leq K$ with two fixed positive constants $\eta_1$ and $K$, there exists a positive constant $b$ such that
\begin{equation}\label{2.41}
\begin{array}{ll}
{\rm Re}(\lambda_1(|\xi|),\lambda_2(|\xi|),\lambda_3(|\xi|),\lambda_4(|\xi|))\leq -b.
\end{array}
\end{equation}
\end{lemma}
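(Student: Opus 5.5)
The argument rests on two facts: the spectrum of $\mathcal A(\xi)$ never touches the imaginary axis for $\xi\neq0$, and it depends continuously on $|\xi|$ over the compact annulus $\eta_1\le|\xi|\le K$. I would first show that no eigenvalue has nonnegative real part for $|\xi|>0$, using the Routh--Hurwitz criterion on the quartic (\ref{2.12}). Write it as $\lambda^4+a_1\lambda^3+a_2\lambda^2+a_3\lambda+a_4=0$ with $r=|\xi|$ and
\begin{equation*}
a_1=(\nu^++\nu^-)r^2,\quad a_2=(\beta_1+\beta_4)r^2+\nu^+\nu^-r^4,\quad a_3=(\beta_1\nu^-+\beta_4\nu^+)r^4,\quad a_4=(\beta_1\beta_4-\beta_2\beta_3)r^4.
\end{equation*}
For a real quartic, all roots lie in the open left half-plane iff $a_i>0$ for $i=1,\dots,4$ and $a_1a_2a_3>a_3^2+a_1^2a_4$.

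Positivity of the coefficients comes from the structure. We have $\nu^\pm>0$. Next, $\beta_1>0$ and $\beta_3>0$ since $\mathcal C^2,\rho^\pm>0$. We have $\beta_4>0$ because $f'(1)<0$ in (\ref{1.12}). Finally, $\beta_1\beta_4-\beta_2\beta_3>0$ by (\ref{2.1(1)}).

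For the Hurwitz inequality, divide by $r^{10}$ and set $s=r^2$, $N=\nu^++\nu^-$, $B=\beta_1\nu^-+\beta_4\nu^+$, $D=\beta_1\beta_4-\beta_2\beta_3$. The condition becomes
\begin{equation*}
NB(\beta_1+\beta_4)+N\nu^+\nu^-Bs>B^2+N^2D.
\end{equation*}
Since $s>0$ and the $s$-term is positive, it suffices to check the $s$-independent inequality
\begin{equation*}
(\nu^++\nu^-)(\beta_1\nu^-+\beta_4\nu^+)(\beta_1+\beta_4)-(\beta_1\nu^-+\beta_4\nu^+)^2>(\nu^++\nu^-)^2(\beta_1\beta_4-\beta_2\beta_3).
\end{equation*}
Expanding, the left side is $\nu^+\nu^-(\beta_1+\beta_4)^2+\beta_1\beta_4(\nu^+-\nu^-)^2$, after a short computation I would carry out. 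Using $D<\beta_1\beta_4$ and $(\beta_1+\beta_4)^2\ge4\beta_1\beta_4$, the left side is at least $\beta_1\beta_4\bigl(4\nu^+\nu^-+(\nu^+-\nu^-)^2\bigr)=\beta_1\beta_4(\nu^++\nu^-)^2$. This exceeds $(\nu^++\nu^-)^2D$ exactly when $\beta_2\beta_3>0$.

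That positivity is the main obstacle. Now $\beta_3>0$ always, and $\beta_2>0$ iff $f'(1)>-s_-^2/\alpha^-$, which is precisely the left half of (\ref{1.12}). So the stability condition is exactly what closes this step. (When $\beta_2\beta_3\le0$ one would instead have to use the $s$-term, but under (\ref{1.12}) that case does not arise.) Hence, for every fixed $\xi\neq0$, $\mathrm{Re}\,\lambda_i(|\xi|)<0$.

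Finally, the roots of a monic polynomial depend continuously on its coefficients, and the coefficients are continuous in $|\xi|$. So $\max_i\mathrm{Re}\,\lambda_i(|\xi|)$ is a continuous negative function on the compact set $[\eta_1,K]$. It therefore attains a negative maximum $-b$, which gives (\ref{2.41}).
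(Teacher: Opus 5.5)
Your proposal is correct and follows the route the paper indicates: apply the Routh--Hurwitz criterion to the quartic (\ref{2.12}) under (\ref{1.12}), then use continuity of the roots on the compact annulus to get a uniform $b$. The paper only names Routh--Hurwitz, and you supply the details it omits. The one slip is cosmetic: your displayed inequality comes from dividing by $r^8$, not $r^{10}$.
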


The analyticity for the middle frequency is partially based on the idea in Li \cite{ld}.
\begin{lemma}\label{l 2.4}
The middle part $\hat{\tilde{G}}^m(\xi,t)$ is analytic when $|\xi|^2\geq \delta$, where $\delta$ is a positive constant.
\end{lemma}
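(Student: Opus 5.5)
The plan is to show that the entries of $\hat{\tilde G}^m(\xi,t)=e^{t\mathcal A(\xi)}$ extend analytically in the radial variable, uniformly on the relevant region. First observe that $\mathcal A(\xi)$ in \eqref{2.11} depends on $\xi$ only through $|\xi|$. Conjugating by $\mathrm{diag}(1,|\xi|^{-1},1,|\xi|^{-1})$ turns it into a matrix whose entries are polynomials in $s:=|\xi|^2$. Indeed, $-|\xi|$ becomes $-|\xi|^2$, $\beta_i|\xi|$ becomes $\beta_i$, and $-\nu^\pm|\xi|^2$ is unchanged. So $\mathcal A(\xi)=D(\xi)\,M(s)\,D(\xi)^{-1}$ with $M$ entire in $s$. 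Hence $e^{t\mathcal A(\xi)}=D\,e^{tM(s)}D^{-1}$. Since $e^{tM(s)}$ is an entire matrix-valued function of $s$ for each $t$, the entries of $\hat{\tilde G}$ are, up to the explicit factors $|\xi|^{\pm1}$, analytic functions of $s=|\xi|^2$. The factors $|\xi|^{\pm1}=s^{\pm1/2}$ are analytic on $\{s:\ {\rm Re}\, s\ge\delta/2\}$ once a branch cut along the negative axis is fixed. This gives analyticity of $\hat{\tilde G}^m$ as a function of $|\xi|^2$ in a complex neighborhood of $[\delta,K^2]$, which is the statement for $|\xi|^2\ge\delta$.

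The analyticity alone is not what is used later. I would therefore also establish the accompanying quantitative bound, following Li \cite{ld}. Write $e^{tM(s)}=\frac{1}{2\pi i}\oint_\Gamma e^{t\lambda}(\lambda-M(s))^{-1}\,d\lambda$, with $\Gamma$ a fixed contour enclosing the spectrum of $M(s)$ for $s$ in a small complex neighborhood $\Omega$ of $[\delta,K^2]$. By Lemma \ref{l 2.3} and continuity of eigenvalues in $s$, the spectrum stays in $\{{\rm Re}\,\lambda\le -b/2\}$ for $s\in\Omega$ after shrinking $\Omega$. Taking $\Gamma\subset\{{\rm Re}\,\lambda\le -b/4\}$ yields $|e^{tM(s)}|\lesssim e^{-bt/4}$ uniformly on $\Omega$. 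This formula also gives analyticity directly, since the resolvent is analytic in $s$ away from the spectrum. The formula is valid even if eigenvalues collide in the middle range, which is exactly where the projector representation \eqref{2.22} could break down.

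The main obstacle is the uniform choice of $\Omega$ and $\Gamma$. I would handle it by compactness. For each real $s_0\in[\delta,K^2]$, Lemma \ref{l 2.3} gives ${\rm Re}\,\lambda\le-b$ for the spectrum of $M(s_0)$. Upper semicontinuity of the spectrum gives a complex disc around $s_0$ on which the spectrum stays in ${\rm Re}\,\lambda<-b/2$ and inside a fixed bounded set; the latter follows because the entries of $M(s)$ are bounded by a constant depending on $K$. Finitely many such discs cover the interval, and their union is $\Omega$. A single contour then works, for instance the boundary of a large rectangle $\{-R\le{\rm Re}\,\lambda\le-b/4,\ |{\rm Im}\,\lambda|\le R\}$. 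The resolvent is bounded on $\Gamma\times\Omega$ by continuity and compactness, which closes the argument.
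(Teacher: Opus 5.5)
Your proof is correct. The paper gives no argument for Lemma \ref{l 2.4} beyond attributing it to an idea of Li \cite{ld}, so yours effectively supplies the missing proof.

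The similarity $E\mathcal A(\xi)E^{-1}=M(|\xi|^2)$, with $E=\mathrm{diag}(1,|\xi|^{-1},1,|\xi|^{-1})$ (so your $D$ is $E^{-1}$), puts the only non-analyticity into the explicit factors $|\xi|^{\pm1}$. That is exactly why a neighbourhood of $\xi=0$ is excluded. Composing with the polynomial $\xi\mapsto\xi\cdot\xi$ then gives analyticity in $\xi$ itself on a complex neighbourhood of the real annulus. This is the form needed to integrate by parts in Lemma \ref{l 2.5}.

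Your Dunford-integral step is necessary, not just convenient, and the reason is stronger than your ``could break down''.
By Lemma \ref{lemma2.1}, all four eigenvalues are non-real for $|\xi|\le\eta_1$. By Lemma \ref{l 2.2}, two of them are real for large $|\xi|$: they are isolated roots of a real polynomial near the two distinct real roots of $\nu^+\nu^-\lambda^2+(\beta_1\nu^-+\beta_4\nu^+)\lambda+\beta_1\beta_4-\beta_2\beta_3=0$. These roots are distinct because the discriminant is $(\beta_1\nu^--\beta_4\nu^+)^2+4\nu^+\nu^-\beta_2\beta_3>0$, using $\beta_2,\beta_3>0$ under \eqref{1.12}. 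Hence a double eigenvalue must occur at some intermediate $|\xi|$, and there the projectors \eqref{2.22} are undefined. The contour representation is insensitive to this. Its uniform bound on a complex neighbourhood gives, through Cauchy's estimates, $|\partial_\xi^\beta\hat{\tilde G}(\xi,t)|\le C_\beta e^{-bt/4}$ on the middle annulus without tracking powers of $t$.

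Two interpretive points. First, the lemma must be read for the uncut symbol $\hat{\tilde G}(\xi,t)$, since multiplying by a compactly supported smooth cutoff destroys analyticity. Second, your exponential bound rests on Lemma \ref{l 2.3}, so take $\delta=\eta_1^2$, or use Lemma \ref{lemma2.1} on $[\delta,\eta_1^2]$ if $\delta$ is smaller. Also take closed discs, so that the compactness argument for the resolvent on $\Gamma\times\overline{\Omega}$ is literally valid.
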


Lemma \ref{l 2.3} and Lemma \ref{l 2.4} directly yield the space-time estimates for middle frequency $\tilde{G}^m$:
\begin{lemma}\label{l 2.5}For any $|\alpha|\geq0$, there exists a constant $b>0$ such that
\begin{equation*}
|D_x^\alpha \tilde{G}^m(x,t)|\leq Ce^{-bt}(1+|x|^2)^{-N},
\end{equation*}
where the constant $N>0$ can be arbitrarily large.
\end{lemma}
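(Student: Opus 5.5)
The estimate for $\tilde G^m$ follows from combining Lemma \ref{l 2.3} (uniform spectral gap) with Lemma \ref{l 2.4} (analyticity), via the standard argument of shifting the Fourier contour into the complex domain. Write
\[
\tilde G^m(x,t)=\int_{\mathbb{R}^3}e^{{\rm i}x\cdot\xi}\chi_m(\xi)\,e^{t\mathcal A(\xi)}\,d\xi ,
\]
where $\chi_m$ is a smooth cutoff supported in $\eta_1/2\le|\xi|\le 2K$. I will not use the eigenvalue decomposition \eqref{2.23}, since the eigenvalues may coalesce in the middle range. Instead I treat $e^{t\mathcal A(\xi)}$ directly as an entire function of the entries of $\mathcal A(\xi)$.

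The first step is to note that $\mathcal A(\xi)$ depends on $\xi$ only through $|\xi|$ and $|\xi|^2$. These extend analytically to $\xi\in\mathbb{C}^3$ near the real region $\{\eta_1/2\le|\xi|\le2K\}$: use $|\xi|=(\xi\cdot\xi)^{1/2}$, which is analytic wherever $\operatorname{Re}(\xi\cdot\xi)\ge\delta>0$. This is the content of Lemma \ref{l 2.4}.

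The second step is to carry the bound of Lemma \ref{l 2.3} to the complexified region. For real $\xi$ in the middle range, all eigenvalues satisfy $\operatorname{Re}\lambda_i\le -b$. The spectrum depends continuously on the entries and the region is compact, so for $\xi+{\rm i}\zeta$ with $|\zeta|\le \epsilon_0$ small the spectrum still lies in $\{\operatorname{Re}\lambda\le -b/2\}$. The Dunford integral
\[
e^{t\mathcal A}=\frac{1}{2\pi{\rm i}}\oint_\Gamma e^{t\lambda}(\lambda-\mathcal A)^{-1}d\lambda
\]
then gives $|e^{t\mathcal A(\xi+{\rm i}\zeta)}|\le Ce^{-bt/2}$ uniformly, because the resolvent is bounded on a fixed contour $\Gamma$ lying in $\operatorname{Re}\lambda\le -b/4$. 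This avoids any need to control projectors near degenerate points.

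The third step obtains spatial decay in one of two equivalent ways. The first is to integrate by parts: $x^\beta\tilde G^m$ corresponds to $\partial_\xi^\beta$ acting on $\chi_m e^{t\mathcal A}$. By Cauchy estimates on the analytic extension, each derivative of $e^{t\mathcal A}$ is bounded by $C_\beta e^{-bt/2}$ with no polynomial loss in $t$, since the derivatives are taken in the complex variable on a polydisc where the uniform bound holds. The derivatives falling on $\chi_m$ are harmless. This gives $|x|^{2N}|D_x^\alpha\tilde G^m|\le C_N e^{-bt/2}$ for every $N$. The factor $(\mathrm{i}\xi)^\alpha$ is bounded on the support, so it poses no difficulty. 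The second way, if one prefers to avoid the cutoff, is to shift the contour $\xi\mapsto\xi+{\rm i}\epsilon_0 x/|x|$, which yields $e^{-\epsilon_0|x|}$. Combining with $|\tilde G^m|\le Ce^{-bt/2}$ for $|x|\le1$ gives the claim with $b$ replaced by $b/2$ (renamed $b$).

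The main obstacle is the second step. Cauchy estimates on the derivatives $\partial_\xi^\beta e^{t\mathcal A(\xi)}$ naively produce factors of $t^{|\beta|}$, and near points where the $\lambda_i$ collide the projectors $P^i$ blow up. I would handle both issues through the resolvent contour representation. Its uniformity in complex $\xi$ depends on a uniform lower bound for $\operatorname{dist}(\Gamma,\sigma(\mathcal A(\xi+{\rm i}\zeta)))$. That bound comes from continuity of the spectrum together with compactness of the middle-frequency annulus, after choosing $\epsilon_0$ small.
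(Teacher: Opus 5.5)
Your proposal is correct and takes the same route as the paper. The paper only asserts that the uniform spectral gap of Lemma \ref{l 2.3} together with the analyticity of Lemma \ref{l 2.4} ``directly yield'' the bound. Your Dunford integral on a fixed contour, combined with Cauchy estimates on a complex neighbourhood of the middle annulus, supplies the standard details behind that assertion, and it also covers coalescing eigenvalues, which the projector formula \eqref{2.22} does not.

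Two small points. Rely on your first (integration-by-parts) route: the alternative contour shift $\xi\mapsto\xi+{\rm i}\epsilon_0x/|x|$ is not available once the non-analytic cutoff $\chi_m$ is present. Also, a contour lying in $\operatorname{Re}\lambda\le-b/4$ gives $e^{-bt/4}$ rather than $e^{-bt/2}$, which is harmless after renaming $b$.
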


%

\section{Pointwise space-time description of Green's function}
\quad\quad We shall bring Green's function in Fourier space back to the physical space, and combine the compressible part and the incompressible part to obtain the pointwise description of Green's function of the original system (\ref{2.2}). The difficulties mainly include resolving the singularity from the Riesz operator in low frequency of Green's matrix by using suitable combinations, and giving the description of the singular part in high frequency arising from the definition of Green's function.

The Hodge decomposition, Lemma \ref{lemma2.1} together with (\ref{2.8(1)}) and (\ref{2.26})-(\ref{2.29}) directly give the following for the low frequency:
\begin{align}\label{3.1}
\hat{n}^{+,l}=&\Big[\frac{(\kappa_1+\kappa_2-\beta_1)(e^{\lambda_1t}+e^{\lambda_2t})}{4\kappa_1}+\frac{(\beta_1-(\kappa_2-\kappa_1))(e^{\lambda_3t}+e^{\lambda_4t})}{4\kappa_1}
\Big]\hat{n}_0^+\nonumber\\
\!\!&\!+\!\Big[\rm{i}\frac{(\kappa_1+\kappa_2-\beta_1)(e^{\lambda_1t}\!-\!e^{\lambda_2t})}{4\kappa_1\sqrt{\kappa_2-\kappa_1}}
+\frac{(\beta_1-(\kappa_2-\kappa_1))(e^{\lambda_3t}-e^{\lambda_4t})}{4\kappa_1\sqrt{\kappa_2+\kappa_1}}\Big)\Big]\frac{{\rm i}\xi^T}{|\xi|}\hat{\mathbf{m}}_{0}^{+}\nonumber\\[2mm]
&+\Big[\frac{-\beta_2(e^{\lambda_1t}\!+\!e^{\lambda_2t})}{4\kappa_1}+\frac{\beta_2(e^{\lambda_3t}\!+\!e^{\lambda_4t})}{4\kappa_1}\Big]\hat{n}_0^{-} \nonumber\\[2mm]
&+\Big[\frac{\beta_2(e^{\lambda_1t}-e^{\lambda_2t})}{4\kappa_1\sqrt{\kappa_2-\kappa_1}}{\rm{i}}
+\frac{\beta_2(e^{\lambda_3t}-e^{\lambda_4t})}{4\kappa_1\sqrt{\kappa_2+\kappa_1}}{\rm{i}}\Big]\frac{\rm{i}\xi^T}{|\xi|}\hat{\mathbf{m}}_{0}^{-}+\mathcal{R}_1^l\nonumber\\[3mm]
:=&(G_{11}^l,G_{12}^l,G_{13}^l,G_{14}^l)\cdot(\hat{n}_{0}^+,\hat{\mathbf{m}}_0^+,\hat{n}_{0}^-,\hat{\mathbf{m}}_{0}^-),
\end{align}
\begin{align}\label{3.2}
\!\!\!\!\!\!\!\!\!\!\!\!\!\!\!\!\!\!\!\!\!\!\!\!\!\!\!&\!\!\!\!\!\!\hat{\mathbf{m}}^{+,l} =-\widehat{\Lambda^{-1}\nabla \varphi^+}-\widehat{\Lambda ^{-1}{\rm div} \phi^+}\nonumber\\
=&\Big[\frac{\beta_1^2+\beta_2\beta_3-\beta_1(\kappa_1+\kappa_2)}{4\kappa_1
\sqrt{\kappa_2-\kappa_1}}(e^{\lambda_1t}-e^{\lambda_2t})-\frac{\beta_1^2+\beta_2\beta_3-\beta_1(\kappa_2-\kappa_1)}{4\kappa_1
\sqrt{\kappa_2+\kappa_1}}(e^{\lambda_3t}-e^{\lambda_4t})\Big]\frac{\rm{i}\xi\hat{n}_{0}^{+}}{|\xi|}\nonumber\\
&+\bigg[\frac{(\kappa_1+\kappa_2-\beta_1)(e^{\lambda_1t}\!+\!e^{\lambda_2t})}{4\kappa_1}\frac{\xi\xi^T}{|\xi|^{2}}+\frac{(\beta_1-(\kappa_2-\kappa_1))(e^{\lambda_3t}\!+\!e^{\lambda_4t})}{4\kappa_1}
\frac{\xi\xi^T}{|\xi|^{2}}+e^{-\nu^+|\xi|^2t}\Big(\mathbb{I}-\frac{\xi\xi^T}{|\xi|^{2}}\Big)\bigg]\hat{\mathbf{m}}_{0}^+\nonumber\\
&+\Big[\frac{\beta_1\beta_2+\beta_2\beta_4-\beta_2(\kappa_1+\kappa_2)}{4\kappa_1\sqrt{\kappa_2-\kappa_1}}(e^{\lambda_1t}-e^{\lambda_2t})
-\frac{\beta_1\beta_2+\beta_2\beta_4-\beta_2(\kappa_2-\kappa_1)}{4\kappa_1\sqrt{\kappa_2+\kappa_1}}(e^{\lambda_3t}-e^{\lambda_4t})
\Big]\frac{\rm{i}\xi\hat{n}_{0}^{-}}{|\xi|}\nonumber\\[-0.5mm]
&\!+\!\bigg[\frac{-\beta_2(e^{\lambda_1t}\!+\!e^{\lambda_2t})}{4\kappa_1}\frac{\xi\xi^T}{|\xi|^{2}}+\frac{\beta_2(e^{\lambda_3t}\!+\!e^{\lambda_4t})}{4\kappa_1}
\frac{\xi\xi^T}{|\xi|^{2}}+e^{-\nu^-|\xi|^2t}\Big(\mathbb{I}-\frac{\xi\xi^T}{|\xi|^{2}}\Big)\bigg]\hat{\mathbf{m}}_{0}^{-}\!+\!\mathcal{R}_2^l\nonumber\\
:=&(G_{21}^l,G_{22}^l,G_{23}^l,G_{24}^l)\cdot(\hat{n}_{0}^+,\hat{\mathbf{m}}_0^+,\hat{n}_{0}^-,\hat{\mathbf{m}}_{0}^-),
\end{align}
\vspace{-4mm}
\begin{align}\label{3.3}
\hat{n}^{-,l}=&\Big[\frac{-\beta_3(e^{\lambda_1t}+e^{\lambda_2t})}{4\kappa_1}+\frac{\beta_3(e^{\lambda_3t}+e^{\lambda_4t})}{4\kappa_1}
\Big]\hat{n}_0^+\nonumber\\
\!\!&\!+\!\Big[\rm{i}\frac{-\beta_3(e^{\lambda_1t}\!-\!e^{\lambda_2t})}{4\kappa_1\sqrt{\kappa_2-\kappa_1}}
+\frac{\beta_3(e^{\lambda_3t}-e^{\lambda_4t})}{4\kappa_1\sqrt{\kappa_2+\kappa_1}}\Big]\frac{{\rm i}\xi^T}{|\xi|}\hat{\mathbf{m}}_{0}^{+}\nonumber\\
&+\Big[\frac{(\kappa_1+\kappa_2-\beta_4)(e^{\lambda_1t}\!+\!e^{\lambda_2t})}{4\kappa_1}-\frac{(\kappa_1+\kappa_2-\beta_4)(e^{\lambda_3t}\!+\!e^{\lambda_4t})}{4\kappa_1}\Big]\hat{n}_0^{-} \nonumber\\[-0.5mm]
&+\Big[\frac{(\kappa_1+\kappa_2-\beta_4)(e^{\lambda_1t}-e^{\lambda_2t})}{4\kappa_1\sqrt{\kappa_2-\kappa_1}}{\rm{i}}
+\frac{(\kappa_1+\kappa_2-\beta_4)(e^{\lambda_3t}-e^{\lambda_4t})}{4\kappa_1\sqrt{\kappa_2+\kappa_1}}{\rm{i}}\Big]\frac{i\xi^T}{|\xi|}\hat{\mathbf{m}}_{0}^{-}+\mathcal{R}_3^l\nonumber\\[-0.5mm]
:=&(G_{31}^l,G_{32}^l,G_{33}^l,G_{34}^l)\cdot(\hat{n}_{0}^+,\hat{\mathbf{m}}_0^+,\hat{n}_{0}^-,\hat{\mathbf{m}}_{0}^-),
\end{align}
\begin{align}\label{3.4}
&\!\!\!\!\!\!\!\!\!\hat{\mathbf{m}}^{-,l}=-\widehat{\Lambda^{-1}\nabla \varphi^-}-\widehat{\Lambda ^{-1}{\rm div} \phi^-}\nonumber\\
=&\Big[\frac{\beta_1\beta_3+\beta_3\beta_4-\beta_3(\kappa_1+\kappa_2)}{4\kappa_1
\sqrt{\kappa_2-\kappa_1}}(e^{\lambda_1t}-e^{\lambda_2t})-\frac{\beta_1\beta_3+\beta_3\beta_4-\beta_3(\kappa_2-\kappa_1)}{4\kappa_1
\sqrt{\kappa_2+\kappa_1}}(e^{\lambda_3t}-e^{\lambda_4t})\Big]\frac{\rm{i}\xi\hat{n}_{0}^{+}}{|\xi|}\nonumber\\
&+\bigg[\frac{-\beta_3(e^{\lambda_1t}\!+\!e^{\lambda_2t})}{4\kappa_1}\frac{\xi\xi^T}{|\xi|^{2}}+\frac{\beta_3(e^{\lambda_3t}\!+\!e^{\lambda_4t})}{4\kappa_1}
\frac{\xi\xi^T}{|\xi|^{2}}+e^{-\nu^+|\xi|^2t}\Big(\mathbb{I}-\frac{\xi\xi^T}{|\xi|^{2}}\Big)\bigg]\hat{\mathbf{m}}_{0}^+\nonumber\\
&+\Big[\frac{\beta_2\beta_3+\beta_4^2-\beta_4(\kappa_1+\kappa_2)}{4\kappa_1\sqrt{\kappa_2-\kappa_1}}(e^{\lambda_1t}-e^{\lambda_2t})
-\frac{\beta_2\beta_3+\beta_4^2-\beta_4(\kappa_2-\kappa_1)}{4\kappa_1\sqrt{\kappa_2+\kappa_1}}(e^{\lambda_3t}-e^{\lambda_4t})
\Big]\frac{\rm{i}\xi\hat{n}_{0}^{-}}{|\xi|}\nonumber\\[-0.5mm]
&\!+\!\bigg[\frac{(\kappa_1+\kappa_2-\beta_4)(e^{\lambda_1t}\!+\!e^{\lambda_2t})}{4\kappa_1}\frac{\xi\xi^T}{|\xi|^{2}}+\frac{(\beta_4-(\kappa_2-\kappa_1))(e^{\lambda_3t}\!+\!e^{\lambda_4t})}{4\kappa_1}
\frac{\xi\xi^T}{|\xi|^{2}}+e^{-\nu^-|\xi|^2t}\Big(\mathbb{I}-\frac{\xi\xi^T}{|\xi|^{2}}\Big)\bigg]\hat{\mathbf{m}}_{0}^{-}\!+\!\mathcal{R}_4^l\nonumber\\
:=&(G_{21}^l,G_{22}^l,G_{23}^l,G_{24}^l)\cdot(\hat{n}_{0}^+,\hat{\mathbf{m}}_0^+,\hat{n}_{0}^-,\hat{\mathbf{m}}_{0}^-).
\end{align}
We can mainly deal with the leading term of each entry, since the rest terms $\mathcal{R}_i^l$ for $i=1,2,3,4$  in (\ref{3.1})-(\ref{3.4})) are analytic and have faster temporal decay rates than the leading terms. The main difficulty focuses on the interaction of the Riesz operators (the first and second-order) in low frequency, the wave operators and the heat kernels. We merely take a few typical leading terms for instances to elaborate on the avoidance of the seeming singularity at $\xi=0$ and derive different wave patterns in the low frequency based on suitable combinations and cancellations.

To facilitate explanation, we denote $\hat{\mathbf{w}}_t=\cos({\rm c}_1|\xi|t)$ and $\hat{\mathbf{w}}=\frac{\sin({\rm c}_1|\xi|t)}{|\xi|}$ are the Fourier transform of wave operators with the propagation speed ${\rm c}_1$. Then, we first reformulate the following typical term in $\hat{G}_{12}^l$:
\begin{equation}\label{3.5}
\arraycolsep=1.5pt
\begin{array}{rl}
&\frac{e^{\lambda_1t}-e^{\lambda_2t}}{2}\frac{\xi^T}{|\xi|}=\frac{\xi^T}{|\xi|}e^{-b_1|\xi|^2t+\mathcal{O}(|\xi|^4)t}\sin({\rm c}_1|\xi|t+|\xi|\beta(|\xi|^2)t)\\[1mm]
=&\xi^T e^{-b_1|\xi|^2t+\mathcal{O}(|\xi|^4)t}\Big\{\sin({\rm c}_1|\xi|t)\cos(|\xi|\beta(|\xi|^2)t)+\cos({\rm c}_1|\xi|t)\sin(|\xi|\beta(|\xi|^2)t)\Big\}\\[1mm]
=&\xi^T\hat{\mathbf{w}}e^{-b_1|\xi|^2t+\mathcal{O}(|\xi|^4)t}+\xi^T\hat{\mathbf{w}}(\cos(|\xi|\beta(|\xi|^2)t)-1)e^{-b_1|\xi|^2t+\mathcal{O}(|\xi|^4)t}\\[1mm]
&+\xi^T\hat{\mathbf{w}}_t\frac{\sin(|\xi|\beta(|\xi|^2)t)}{|\xi|}e^{-b_1|\xi|^2t+\mathcal{O}(|\xi|^4)t},
\end{array}
\end{equation}
where $\beta(\cdot)$ is an analytic function.

Recall the estimates given in Li \cite{ld} for any $N>0$ that:
\begin{equation}
\arraycolsep=1.5pt
\begin{array}{rl}
&\Big|\mathcal{F}^{-1}\Big((\cos(|\xi|\beta(|\xi|^2)t)-1)e^{-b_1|\xi|^2t+\mathcal{O}(|\xi|^4)t}\Big)\Big|+\Big|\mathcal{F}^{-1}(\frac{\sin(|\xi|\beta(|\xi|^2)t)}{|\xi|}e^{-b_1|\xi|^2t+\mathcal{O}(|\xi|^4)t})\Big|\\[2mm]
\lesssim &(1+t)^{-\frac{3+|\alpha|}{2}}\big(1+\frac{|x|^2}{1+t}\big)^{-N}.
\end{array}
\end{equation}
It together with  Kirchhoff formula in dimension three give that
\begin{equation}\label{3.5(0)}
\Big|\partial_x^\alpha\mathcal{F}^{-1}\Big(\frac{e^{\lambda_1t}-e^{\lambda_2t}}{2}\frac{\xi^T}{|\xi|}\Big)\Big|\lesssim (1+t)^{-\frac{4+|\alpha|}{2}}\Big(1+\frac{(|x|-{\rm c}_1t)^2}{1+t}\Big)^{-N}.
\end{equation}

Similarly, one can immediately see that
\begin{equation}\label{3.5(00)}
\Big|\partial_x^\alpha\mathcal{F}^{-1}\Big(\frac{e^{\lambda_3t}-e^{\lambda_4t}}{2}\frac{\xi^T}{|\xi|}\Big)\Big|\lesssim (1+t)^{-\frac{4+|\alpha|}{2}}\Big(1+\frac{(|x|-{\rm c}_2t)^2}{1+t}\Big)^{-N}.
\end{equation}

Next, we study the most typical entry $G_{22}$, which is different from that of the one-phase fluid model (compressible NS model) in \cite{Ls,Lw}, where
\begin{equation}\label{3.5(1)}
\arraycolsep=1.5pt
\begin{array}{rl}
\hat{G}_{22}^l=&\frac{e^{\lambda_1t}+e^{\lambda_2t}}{2}\frac{\xi\xi^T}{|\xi|^{2}}-e^{-|\xi|^2t}\frac{\xi\xi^T}{|\xi|^{2}}+\cdots\\[2mm]
\sim &\underbrace{(\hat{\mathbf{w}}_t-1)\frac{\xi\xi^T}{|\xi|^{2}}e^{-b_1|\xi|^2t}}_{\text{Riesz\ wave\ I}}+\underbrace{\frac{\xi\xi^T}{|\xi|^{2}}(e^{-b_1|\xi|^2t}-e^{-|\xi|^2t})}_{\text{Riesz\ wave\ I\!I}}+\cdots.
\end{array}
\end{equation}
We use $\hat{R}_i$ with $i=1,2$ to denote the Riesz wave-I and Riesz wave-I\!I, respectively. As in Liu-Noh \cite{Ls} and Li \cite{ld} for the isentropic and non-isentropic compressible Navier-Stokes system, their pointwise space-time descriptions hold for an arbitrarily large integer $N$ that
\begin{align}\label{3.6}
\begin{array}{rl}
&|\partial_x^\alpha(\mathcal{F}^{-1}(\hat{R_1}))|\lesssim (1+t)^{-\frac{3+|\alpha|}{2}}\big(1+\frac{|x|^2}{1+t}\big)^{-\frac{3+|\alpha|}{2}}+(1+t)^{-\frac{4+|\alpha|}{2}}\big(1+\frac{(|x|-{\rm c}t)^2}{1+t}\big)^{-N},\\
&|\partial_x^\alpha(\mathcal{F}^{-1}(\hat{R_2}))|\lesssim (1+t)^{-\frac{3+|\alpha|}{2}}\big(1+\frac{|x|^2}{1+t}\big)^{-N}.
\end{array}
\end{align}

Due to the complicity of the model (\ref{1.1}), its entry $G_{22}$ in low frequency only can be rewritten as
\begin{equation}\label{3.7}
\begin{array}{rl}
\hat{G}_{22}^l=\underbrace{a_1\frac{e^{\lambda_1t}+e^{\lambda_2t}}{2}\frac{\xi\xi^T}{|\xi|^{2}}-a_2e^{-|\xi|^2t}\frac{\xi\xi^T}{|\xi|^{2}}}_{a_1\neq a_2}+\cdots
\sim \underbrace{(\hat{\mathbf{w}}_t-1)\frac{\xi\xi^T}{|\xi|^{2}}e^{-b_1|\xi|^2t}}_{\text{Riesz\ wave\ I}}+\underbrace{\frac{\xi\xi^T}{|\xi|^{2}}e^{-|\xi|^2t}}_{\text{Riesz\ wave\ I\!I\!I}}+\cdots,
\end{array}
\end{equation}
and we also use $\hat{R}_3$ to denote the Riesz wave-I\!I\!I. Although Riesz wave-I\!I\!I is different from Riesz wave-I\!I, we can also get the following according to Corollary \ref{A.3}:
\begin{align}\label{3.8}
\begin{array}{rl}
&|\partial_x^\alpha(\mathcal{F}^{-1}(\hat{R_3}))|\lesssim (1+t)^{-\frac{3+|\alpha|}{2}}\big(1+\frac{|x|^2}{1+t}\big)^{-\frac{3+|\alpha|}{2}}.
\end{array}
\end{align}

In a conclusion, we have the pointwise estimate for Green's function in the low frequency:
\begin{lemma}\label{l 3.1} For $|\alpha|\geq0$, it holds that
\begin{align}\label{3.10}
&when\ (i,j)\neq(2,2),(2,4),(4,2),(4,4),\\
&|\partial_x^\alpha G_{ij}^{l}|
\lesssim (1+t)^{-\frac{4+|\alpha|}{2}}\Big(1+\frac{(|x|-{\rm c}_1t)^2}{1+t}\Big)^{-N}+(1+t)^{-\frac{4+|\alpha|}{2}}\Big(1+\frac{(|x|-{\rm c}_2t)^2}{1+t}\Big)^{-N};\ \ \
\end{align}
\begin{align}\label{3.11}
&when\ (i,j)=(2,2),(2,4),(4,2),(4,4),\\
&\ \ \ \ \ \ \ \ \ \ |\partial_x^\alpha G_{ij}^{l}|\lesssim(1+t)^{-\frac{3+|\alpha|}{2}}\Big(1+\frac{|x|^2}{1+t}\Big)^{-\frac{3+|\alpha|}{2}}+(1+t)^{-\frac{4+|\alpha|}{2}}\Big(1+\frac{(|x|-{\rm c}_1t)^2}{1+t}\Big)^{-N}\\
&\ \ \ \ \ \ \ \ \ \  \ \ \ \ \ \ \ \ \ \ \ \ \ +(1+t)^{-\frac{4+|\alpha|}{2}}\Big(1+\frac{(|x|-{\rm c}_2t)^2}{1+t}\Big)^{-N},
\end{align}
where the constant $N>0$ can be arbitrarily large.
\end{lemma}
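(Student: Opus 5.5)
The plan is to work entry by entry from the low-frequency representations (\ref{3.1})--(\ref{3.4}), with the remainders $\mathcal{R}_i^l$ handled separately. Each leading symbol is a constant multiple of one of the following.
\begin{itemize}
\item[(a)] $e^{\lambda_1t}+e^{\lambda_2t}$ or $e^{\lambda_3t}+e^{\lambda_4t}$, appearing in $G_{11},G_{13},G_{31},G_{33}$.
\item[(b)] $(e^{\lambda_1t}-e^{\lambda_2t})\frac{\xi}{|\xi|}$ or $(e^{\lambda_3t}-e^{\lambda_4t})\frac{\xi}{|\xi|}$, and their transposes, appearing in the entries coupling a density to a momentum.
\item[(c)] $(e^{\lambda_1t}+e^{\lambda_2t})\frac{\xi\xi^T}{|\xi|^2}$, $(e^{\lambda_3t}+e^{\lambda_4t})\frac{\xi\xi^T}{|\xi|^2}$ and $e^{-\nu^\pm|\xi|^2t}(\mathbb{I}-\frac{\xi\xi^T}{|\xi|^2})$, appearing only in the momentum--momentum entries $(2,2),(2,4),(4,2),(4,4)$.
\end{itemize}
The aim is to show that types (a) and (b) yield pure Huygens waves at speeds ${\rm c}_1$ and ${\rm c}_2$ with $N$ arbitrary. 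Type (c) additionally carries a diffusion wave with only algebraic spatial decay $(1+\frac{|x|^2}{1+t})^{-\frac{3+|\alpha|}{2}}$, coming from the Riesz singularity.

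\textbf{Step 1: types (a) and (b).} I write $\lambda_{1,2}=-b_1|\xi|^2\pm{\rm i}({\rm c}_1|\xi|+|\xi|\beta(|\xi|^2))+\mathcal{O}(|\xi|^4)$, using Lemma \ref{lemma2.1}, and similarly for $\lambda_{3,4}$ with ${\rm c}_2$ and $b_2$. Then $\frac{e^{\lambda_1t}+e^{\lambda_2t}}{2}=e^{-b_1|\xi|^2t+\mathcal{O}(|\xi|^4)t}\cos({\rm c}_1|\xi|t+|\xi|\beta t)$. Expanding with the addition formula exactly as in (\ref{3.5}) gives
\[
\hat{\mathbf{w}}_t\cos(|\xi|\beta t)-|\xi|\hat{\mathbf{w}}\,\frac{\sin(|\xi|\beta t)}{|\xi|}\cdot|\xi|
\]
times the heat factor.

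The multipliers $\cos(|\xi|\beta t)-1$ and $\frac{\sin(|\xi|\beta t)}{|\xi|}$ are controlled by Li's estimate. Their heat-like kernels have Gaussian-type decay $(1+\frac{|x|^2}{1+t})^{-N}$. I convolve these kernels with the Kirchhoff kernels of $\hat{\mathbf{w}}_t$ and $\hat{\mathbf{w}}$, which are spherical means on $|x|={\rm c}_1t$. Since $(1+t)^{-1}(1+\frac{(|x|-{\rm c}_1t)^2}{1+t})^{-N}$ controls a Gaussian of width $\sqrt{1+t}$ averaged over that sphere, this produces the profile $(1+t)^{-\frac{4+|\alpha|}{2}}(1+\frac{(|x|-{\rm c}_1t)^2}{1+t})^{-N}$. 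Here the extra $|\xi|$ from the odd symbol $\frac{\xi}{|\xi|}\sin$ gives the $(1+t)^{-1/2}$ gain, as in (\ref{3.5(0)}) and (\ref{3.5(00)}).

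For type (a) there is no Riesz factor, so no singularity arises. I expect the same $(1+t)^{-\frac{4+|\alpha|}{2}}$ profile because the kernel of $\hat{\mathbf{w}}_t$ behaves like $t\,\partial_t$ of a spherical mean. If this rate is not reached directly, I will have to check that the $(1+t)^{-2}$ rate really holds for $\cos$ in 3D; the standard Liu--Noh computation supports it.

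The remainders $\mathcal{R}_i^l$ are analytic in $\xi$ near $0$ and carry an extra factor of $|\xi|$. They are therefore bounded by the same profiles or by faster heat profiles, which are in turn dominated by those profiles.

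\textbf{Step 2: type (c), the main obstacle.} For $(i,j)=(2,2)$ the key observation is the following. The coefficients of $\frac{\xi\xi^T}{|\xi|^2}$ from the $\lambda_{1,2}$ and $\lambda_{3,4}$ parts sum, at $\xi=0$ and $t=0$, to the projection identity, since $P^{1}+P^{2}+P^{3}+P^{4}=\mathbb{I}$. I therefore rewrite the symbol as in (\ref{3.7}):
\[
\sum_k a_k(\hat{\mathbf{w}}^{(k)}_t-1)\frac{\xi\xi^T}{|\xi|^2}e^{-b_k|\xi|^2t}+\sum_k a_k\frac{\xi\xi^T}{|\xi|^2}\big(e^{-b_k|\xi|^2t}-e^{-\nu^+|\xi|^2t}\big)+c\,\frac{\xi\xi^T}{|\xi|^2}e^{-\nu^+|\xi|^2t}+e^{-\nu^+|\xi|^2t}\mathbb{I}.
\]
The terms are handled as follows.
\begin{itemize}
\item The Riesz waves I, one for each speed ${\rm c}_k$, are bounded by (\ref{3.6}).
\item The Riesz waves II are bounded by (\ref{3.6}), giving Gaussian decay.
\item The uncancelled Riesz wave III is bounded by (\ref{3.8}), via Corollary \ref{A.3}.
\item The term $e^{-\nu^+|\xi|^2t}\mathbb{I}$ is a pure heat kernel.
\end{itemize}
For the off-diagonal entries $(2,4)$ and $(4,2)$ the $\mathbb{I}$ term is absent, and the mismatch constant $c$ is just the full Riesz-III coefficient; the same bounds apply. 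Summing all contributions yields (\ref{3.11}), while Step 1 gives (\ref{3.10}).

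The delicate point is verifying that the Riesz-I estimate (\ref{3.6}), which was proved for a single speed, transfers to each speed ${\rm c}_k$ separately. It should, because the symbols decouple linearly. The harder point is confirming that the residual Riesz-III term cannot be absorbed further. That only costs algebraic decay, which is exactly what is claimed.
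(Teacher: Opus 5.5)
Your proposal is correct and takes essentially the same route as the paper. You use the addition-formula splitting \eqref{3.5}, together with Li's estimate and the Kirchhoff formula, for the density and density--momentum entries, and you decompose the momentum--momentum entries into Riesz waves controlled by \eqref{3.6} and Corollary \ref{A.3}. Your bookkeeping is a bit more careful than the schematic \eqref{3.7}, since you keep a Riesz wave I for each speed ${\rm c}_1,{\rm c}_2$. Note also that the projection identity you invoke gives $\sum_k a_k=1$ on the diagonal and $0$ off it, so your leading-order Riesz-III constant $c$ is actually zero. This is harmless, because Riesz wave I already carries the algebraic tail $(1+t)^{-\frac{3+|\alpha|}{2}}\big(1+\frac{|x|^2}{1+t}\big)^{-\frac{3+|\alpha|}{2}}$ that appears in the lemma.
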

We emphasize again that there exist three diffusion waves in the pointwise space-time description of Green's function: a stationary diffusion wave and two moving diffusion waves with different propagation speeds ${\rm c}_1$ and ${\rm c}_2$.

\vspace{3mm}
Combining Lemma \ref{l 3.1}, Remark \ref{r 2.1} and Lemma \ref{l 2.5}, we can conclude the following for each entry of Green's function.
\begin{proposition}{\rm [\textbf{Space-time\ description\ of\ Green's\ function}]}\label{l 3.3}\\
For $|\alpha|\geq0$, one has the following for an arbitrarily large constant $N>0$:
\begin{align}\label{3.17}
&when\ (i,j)\neq(2,2),(2,4),(4,2),(4,4),\ \ \ \nonumber\\
&\ \ \ \ \ \ \  \ |\partial_x^\alpha (G_{ij}-G_S)|
\lesssim (1+t)^{-\frac{4+|\alpha|}{2}}\Big(1+\frac{(|x|-{\rm c}_1t)^2}{1+t}\Big)^{-N}+(1+t)^{-\frac{4+|\alpha|}{2}}\Big(1+\frac{(|x|-{\rm c}_2t)^2}{1+t}\Big)^{-N};
\end{align}
\begin{align}\label{3.11}
&\!\!\!\!\!\!\!\!when\ (i,j)=(2,2),(2,4),(4,2),(4,4),\nonumber\\
&\ \ \ \ \ \ \ \ \ \ \ \ \  |\partial_x^\alpha (G_{ij}-G_S)|\lesssim(1+t)^{-\frac{3+|\alpha|}{2}}\Big(1+\frac{|x|^2}{1+t}\Big)^{-\frac{3+|\alpha|}{2}}+(1+t)^{-\frac{4+|\alpha|}{2}}\Big(1+\frac{(|x|-{\rm c}_1t)^2}{1+t}\Big)^{-N}\nonumber\\
&\ \ \ \ \ \ \ \ \ \  \ \ \ \ \ \ \ \ \ \ \ \ \ \ \ \ \ \ \ \ \ \ \ \ \ \ \ +(1+t)^{-\frac{4+|\alpha|}{2}}\Big(1+\frac{(|x|-{\rm c}_2t)^2}{1+t}\Big)^{-N},
\end{align}
where the singular term $G_S$ is defined in Lemma \ref{l 2.2(1)} and can be regarded as a Dirac-$delta$ function with exponential temporal decay.
\end{proposition}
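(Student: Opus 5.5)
The plan is to split the Green's function into its three frequency pieces with a smooth partition of unity in Fourier space, $1=\chi_1(\xi)+\chi_2(\xi)+\chi_3(\xi)$. Here $\chi_1$ is supported in $|\xi|\le\eta_1$, $\chi_3$ is supported in $|\xi|\ge K$, and $\chi_2$ covers the annulus in between. This gives $G=G^l+G^m+G^h$, and each piece is taken for the full $8\times8$ system, which combines the compressible part $\tilde G$ of (\ref{2.7}) with the two heat kernels (\ref{2.8(1)}) of the incompressible part. The momentum entries are reassembled by the Hodge decomposition $\mathbf{m}^\pm=-\Lambda^{-1}\nabla\varphi^\pm-\Lambda^{-1}{\rm div}\,\phi^\pm$, exactly as in (\ref{3.2}) and (\ref{3.4}). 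I will subtract $G_S$ entirely from the high-frequency piece. The claim then reduces to three estimates: a bound on $\partial_x^\alpha G^l_{ij}$, and the two bounds
\begin{align*}
|\partial_x^\alpha (G^h_{ij}-G_S)|\lesssim e^{-t/C}(1+|x|^2)^{-N}\quad\text{and}\quad|\partial_x^\alpha G^m_{ij}|\lesssim e^{-bt}(1+|x|^2)^{-N}.
\end{align*}

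\textbf{Low frequency.} This is exactly Lemma \ref{l 3.1}: entries with $(i,j)\notin\{(2,2),(2,4),(4,2),(4,4)\}$ satisfy (\ref{3.10}), and the momentum--momentum entries satisfy (\ref{3.11}). Recall how that lemma comes out of (\ref{3.1})--(\ref{3.4}). Entries with a single Riesz factor $\xi/|\xi|$ multiplying $e^{\lambda_1t}\pm e^{\lambda_2t}$ (or $e^{\lambda_3t}\pm e^{\lambda_4t}$) are handled as in (\ref{3.5})--(\ref{3.5(00)}), which gives H-waves at speeds ${\rm c}_1$ and ${\rm c}_2$. Entries without a Riesz factor are treated by the same Kirchhoff-type argument. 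The $\xi\xi^T/|\xi|^2$ entries split into Riesz waves I and III, which are controlled by (\ref{3.6}) and (\ref{3.8}). The remainders $\mathcal R^l_i$ are analytic and decay faster, so they are absorbed.

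\textbf{Middle and high frequency.} The middle piece is controlled by Lemma \ref{l 2.5} for $\tilde G^m$. The incompressible part of the middle piece is a heat kernel cut off to the annulus, so it decays like $e^{-\nu\eta_1^2t}$. Since its symbol is smooth and compactly supported, it also decays faster than any power of $|x|$. The high piece follows from Lemma \ref{l 2.2(1)} together with Remark \ref{r 2.1}, which says the incompressible heat kernels obey the same description. Because $e^{-t/C}$ and $e^{-bt}$ are bounded by $(1+t)^{-k}$ for every $k$, and $(1+|x|^2)^{-N}$ is bounded by each profile $(1+\frac{(|x|-{\rm c}_it)^2}{1+t})^{-N}$ once $t$-powers are traded, both of these pieces are dominated by the right-hand sides of (\ref{3.17}) and (\ref{3.11}). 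Derivatives $\partial_x^\alpha$ only add polynomial weights in $\xi$, which is harmless on compact frequency sets, and which the exponential in time absorbs for the high piece once $G_S$ is removed.

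\textbf{Main obstacle.} The hardest step is bookkeeping the Riesz singularity in the momentum entries, which come from $\mathbb I-\xi\xi^T/|\xi|^2$ and $\xi\xi^T/|\xi|^2$. There is no full cancellation here, so the residual III only has algebraic spatial decay $(1+|x|^2/(1+t))^{-(3+|\alpha|)/2}$ rather than decay of order $N$. I also have to make sure that the frequency cutoff $\chi_1$ does not destroy this bound. The plan is to write $\chi_1=1-(1-\chi_1)$. The term with $1$ is handled by Corollary \ref{A.3}. The term with $(1-\chi_1)$ multiplies a symbol that is smooth away from $\xi=0$ and exponentially decaying in $t$, so it falls under the middle/high estimates.
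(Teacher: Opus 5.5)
Your proposal is correct and follows the paper's own argument, which proves the proposition in one line by combining Lemma~\ref{l 3.1} (low frequency), Lemma~\ref{l 2.5} (middle frequency), and Lemma~\ref{l 2.2(1)} with Remark~\ref{r 2.1} (high frequency), together with the elementary fact that $e^{-bt}(1+|x|^2)^{-N}$ is dominated by each of the stated profiles. Two small corrections: first, Corollary~\ref{A.3} is already stated for the cut-off symbol $\chi_1(\xi)\frac{\xi\xi^T}{|\xi|^2}e^{-a|\xi|^2t}$, so the splitting $\chi_1=1-(1-\chi_1)$ is unnecessary and in fact counterproductive, since both uncut pieces blow up like $t^{-3/2}$ at $x=0$ as $t\to0$; second, in the high-frequency part the time exponential cannot absorb the weights $\xi^\alpha$, and the derivative bounds come instead from the regular remainder in Lemma~\ref{A.4}, which satisfies $|D_x^\alpha f_1|\lesssim(1+|x|^2)^{-N}$ for every $\alpha$, while the singular pieces stay in $G_S$.
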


At the end of this section, it is necessary to emphasize the difficulties concerning Green's function and nonlinear estimates for this truly non-conservative two-phase flow model. The main difficulty is arising from the non-conservative structure of the system (\ref{1.1}) due to two pressure terms and the presence of the Huygens waves in the low frequency of Green's function. In fact, in existing research works on the generalized Huygens principle rely heavily on the nonlinear convolutions in Lemma \ref{A.4}, and these estimates are sharp in the conservative framework (or the divergence form of the nonlinear terms). Here,  the conservative framework includes full conservation laws \cite{Ls,Lw}, a partially conservation laws based on a specific compensatory cancellation in Green’s function or the nonlinear terms as in \cite{Wu4,Wu5,Wu6}. As for the non-conservative model studied in the present paper, one can hardly find two kinds of linear combination to compensate this non-conservation as usual. Therefore,  to achieve the relatively sharp pointwise space-time description of the solution to the ``genuinely" non-conservative model (\ref{1.1}), we have to construct a series of new nonlinear convolution estimates of different wave patterns, especially for the nonlinear coupling of two Huygens waves with different propagation speeds. See the details in Lemma \ref{l 4.2}.

\section{Pointwise estimates for nonlinear system}

\quad\quad In this section, by using the representation of the solution, we derive the pointwise estimates for the solution of the nonlinear system based on the pointwise space-time description of Green's function and the convolution estimates of nonlinear coupling.

Before the process of the proof, we shall state some useful estimates for later use. The first lemma is used to deal with initial propagation.

\begin{lemma}\label{l 4.1}
There exists a constant $C>0$, such that
\begin{equation*}
\arraycolsep=1.5pt
\begin{array}{ll}
\mathcal{I}_1:=&\int_{\mathbb{R}^3}\left(1+\frac{|x-y|^2}{1+t}\right)^{-\frac{3}{2}}(1+|y|^2)^{-r_1}dy\leq C\left(1+\frac{|x|^2}{1+t}\right)^{-\frac{3}{2}},\ {\rm for}\ r_1>\frac{3}{2};\\
\mathcal{I}_2:=&\int_{\mathbb{R}^3} \bigg(1+\frac{(|x-y|-{\rm c}t)^2}{1+t}\bigg)^{-N}(1+|y|^2)^{-r_2}dy\leq C\left(1+\frac{(|x|\!-\!{\rm c}t)^2}{1+t}\right)^{-1},\ {\rm for}\ r_2>\frac{19}{10},
\end{array}
\end{equation*}
where the positive constant $N$ is suitably large.
\end{lemma}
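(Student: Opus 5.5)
The two bounds are handled separately. In each, the integration domain is split according to whether $|y|$ is small or large compared with the relevant scale. The heavy algebraic weight $(1+|y|^2)^{-r_i}$ is then used on one piece, and the diffusive profile on the other.

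\emph{Estimate for $\mathcal{I}_1$.} If $|x|^2\le 1+t$, the right-hand side is of order one. The bound then follows from $\left(1+\frac{|x-y|^2}{1+t}\right)^{-3/2}\le 1$ and $\int(1+|y|^2)^{-r_1}dy<\infty$, which holds because $2r_1>3$. If $|x|^2\ge 1+t$, split $\mathbb{R}^3$ into $\{|y|\le |x|/2\}$ and its complement.
\begin{itemize}
\item On $\{|y|\le|x|/2\}$ we have $|x-y|\ge|x|/2$, so the kernel is bounded by $C\left(1+\frac{|x|^2}{1+t}\right)^{-3/2}$, and the $y$-integral of the weight is finite.
\item On $\{|y|\ge|x|/2\}$ the weight is at most $C|x|^{-2r_1+3+\epsilon}(1+|y|^2)^{-(3+\epsilon)/2}$. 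We bound the kernel by $1$ and the remaining integral by a constant. This gives $C|x|^{-2r_1+3+\epsilon}\le C|x|^{-3}$ provided $2r_1\ge 6$, which is too strong.
\end{itemize}
So on the second piece I will instead keep the kernel and integrate it over a ball. Writing $w=\left(1+\frac{|x-y|^2}{1+t}\right)^{-3/2}$, on $\{|y|\ge|x|/2\}$ I bound
\[
\int_{|y|\ge |x|/2} w\,(1+|y|^2)^{-r_1}dy\le C|x|^{-2r_1}\int_{|x-y|\le 3|x|} w\,dy+\int_{|y|\ge 2|x|}\cdots .
\]
The first integral is $\lesssim (1+t)^{3/2}\log\!\big(1+\tfrac{|x|^2}{1+t}\big)$. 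The resulting term $(1+t)^{3/2}|x|^{-2r_1}\log(\cdots)$ is controlled by $\left(\frac{1+t}{|x|^2}\right)^{3/2}$ exactly because $r_1>3/2$: the excess power $|x|^{-(2r_1-3)}$ absorbs the logarithm, using $|x|\ge1$. On the tail $\{|y|\ge 2|x|\}$ we have $|x-y|\sim|y|$, and the integral is bounded by $\int_{|y|\ge2|x|}\left(\frac{1+t}{|y|^2}\right)^{3/2}|y|^{-2r_1}dy\lesssim (1+t)^{3/2}|x|^{-2r_1}$, which is handled the same way.

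\emph{Estimate for $\mathcal{I}_2$.} This is the main obstacle. The kernel is concentrated on the spherical shell $\big||x-y|-{\rm c}t\big|\lesssim\sqrt{1+t}$, whose volume is of order $t^2\sqrt{1+t}$, so no smallness can come from the kernel's total mass. Two cases arise.
\begin{itemize}
\item \textbf{Case $\big||x|-{\rm c}t\big|\le\sqrt{1+t}$.} The target is of order one, and the claim is immediate from bounding the kernel by $1$.
\item \textbf{Case $\big||x|-{\rm c}t\big|\ge\sqrt{1+t}$.} Split $y$ by whether $|y|\le \frac12\big||x|-{\rm c}t\big|$.
\begin{itemize}
\item For small $|y|$, the triangle inequality gives $\big||x-y|-{\rm c}t\big|\ge\frac12\big||x|-{\rm c}t\big|$. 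The kernel is then bounded by $\left(1+\frac{(|x|-{\rm c}t)^2}{1+t}\right)^{-N}$, which is far better than needed.
\item For large $|y|$, write $D=\big||x|-{\rm c}t\big|$ and drop one power: the kernel is at most $1$, and the weight satisfies $(1+|y|^2)^{-r_2}\le D^{-2}(1+|y|^2)^{-r_2+1}$. This would give $D^{-2}\int(1+|y|^2)^{1-r_2}dy$, which requires $r_2>5/2$ and is not enough. I would therefore use the shell geometry. In spherical coordinates centered at $x$, set $\rho=|x-y|$. The kernel restricts $\rho$ to a window of width about $\sqrt{1+t}$ (with polynomial tails), and on the sphere of radius $\rho$ about $x$ the weight integrates to roughly $\rho^{-1}\int_{|\,|x|-\rho|}^{|x|+\rho}s(1+s^2)^{-r_2}ds$. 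This is $\lesssim\rho^{-1}\,\big(1+||x|-\rho|\big)^{2-2r_2}$ because $r_2>1$.
\end{itemize}
\end{itemize}
For the large-$|y|$ piece, integrating over $\rho$ against the kernel gives a bound of about $\frac{\sqrt{1+t}}{t}\,(1+D)^{2-2r_2}$, plus tail contributions. Converting this into $\frac{1+t}{D^2}$ requires $(1+D)^{2-2r_2}\lesssim (1+t)^{1/2}D^{-2}$ in the regimes where $D\lesssim t$. When $D\gtrsim t$, the factor $|y|\gtrsim D$ alone gives extra decay. Balancing these two regimes is exactly where a threshold such as $r_2>\frac{19}{10}$ should emerge, possibly after interpolating the kernel power $N$ against the algebraic weight.

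I expect this bookkeeping between the shell width $\sqrt{1+t}$, the radius ${\rm c}t$, and the distance $D$ to be the delicate step. I would first try the clean split $D\le t^{\theta}$ versus $D\ge t^{\theta}$ and optimize $\theta$.
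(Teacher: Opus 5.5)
Your argument for $\mathcal{I}_1$ is correct; the paper does not prove it and only cites earlier work. For $\mathcal{I}_2$, your trivial case and your small-$|y|$ piece coincide with the paper's. The gap is the remaining piece, $|y|\ge D/2$ with $D=\big||x|-{\rm c}t\big|\ge 2\sqrt{1+t}$, which is the only nontrivial part of the lemma. You never actually bound it; you end with a plan to ``optimize $\theta$''. The shell computation you intend to build on is also wrong. Integrating the weight over the sphere $\{|y-x|=\rho\}$ gives
\[
\int_{|y-x|=\rho}(1+|y|^2)^{-r_2}\,d\sigma(y)=\frac{2\pi\rho}{|x|}\int_{||x|-\rho|}^{|x|+\rho}s(1+s^2)^{-r_2}\,ds,
\]
not $\rho^{-1}\int\cdots$. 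As a check, take $r_2=0$: the left side must equal $4\pi\rho^2$. On the window $\rho\approx{\rm c}t$ the correct expression is larger than yours by a factor of order $t^2/|x|$. So your bound $\frac{\sqrt{1+t}}{t}(1+D)^{2-2r_2}$ does not hold, and neither does the condition you derive from it.

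The missing idea is an interpolation in Lebesgue exponents, not in $N$. Your ``drop one power'' estimate is the $L^\infty$--$L^1$ pairing, which needs $r_2>\frac52$. The paper keeps the same first step, $(1+|y|^2)^{-1}\lesssim (1+D^2)^{-1}$ on $|y|\ge D/2$, but then applies H\"older with exponents $\frac52$ and $\frac53$.

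Write $K(\rho)=\big(1+\frac{(\rho-{\rm c}t)^2}{1+t}\big)^{-N}$. The shell volume bound $\int_{\mathbb{R}^3}\big(1+\frac{(|y|-a)^2}{b}\big)^{-r}dy\lesssim b^{3/2}+b^{1/2}a^2$ gives $\|K(|x-\cdot|)\|_{L^{5/2}}\lesssim\big((1+t)^{5/2}\big)^{2/5}=1+t$. Meanwhile $(1+|y|^2)^{-(r_2-1)}\in L^{5/3}(\mathbb{R}^3)$ exactly when $\frac{10}{3}(r_2-1)>3$, i.e.\ $r_2>\frac{19}{10}$. Hence this piece is $\lesssim (1+t)(1+D^2)^{-1}\lesssim\big(1+\frac{D^2}{1+t}\big)^{-1}$, and this is where $\frac{19}{10}$ comes from.

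Your geometric route can also be salvaged, but only with the corrected formula. It bounds the restricted spherical integral by a constant times $\min\{\frac{\rho}{|x|}D^{2-2r_2},\,\rho^2D^{-2r_2}\}$. Combine this with $\int_0^\infty K\rho\,d\rho\lesssim(1+t)^{3/2}$, $\int_0^\infty K\rho^2\,d\rho\lesssim(1+t)^{5/2}$, and a split at $|x|=\frac{{\rm c}t}{2}$. This closes the estimate for $r_2\ge\frac74$, which is sharp, as $x=0$ shows. As written, however, no estimate for this piece has been established.
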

\begin{proof}
The estimates $\mathcal{I}_1$ is used to deal with the initial propagation for the convolution of $Riesz\ wave$ respectively, and the proof of them can be seen in \cite{Wu5}. Hence, we only prove $\mathcal{I}_2$. When $(|x|-{\rm c}t)^2\leq 4(1+t)$,
$$
\mathcal{I}_2\leq C\leq C\left(1+\frac{(|x|-{\rm c}t)^2}{1+t}\right)^{-r_1}.
$$
When $|x|-{\rm c}t\geq 2\sqrt{1+t}$, we break integration into two parts. If $|y|\geq\frac{|x|-{\rm c}t}{2}$, then
\begin{equation*}
\arraycolsep=1.5pt
\begin{array}{rl}
\mathcal{I}_2\leq & C(1+(|x|-{\rm c}t)^2)^{-1}\int_{\mathbb{R}^3} \bigg(1+\frac{(|x-y|-{\rm c}t)^2}{1+t}\bigg)^{-N}(1+|y|^2)^{-(r_2-1)}dy\\
\leq & C(1+(|x|-{\rm c}t)^2)^{-1}(1+t)\left(\int_{\mathbb{R}^3}(1+|y|^2)^{-\frac{5}{3}(r_2-1)}dy\right)^{\frac{3}{5}}\\
\leq & C\left(1+\frac{(|x|\!-\!{\rm c}t)^2}{1+t}\right)^{-1},\ {\rm for}\ r_2>\frac{19}{10}.
\end{array}
\end{equation*}
Here we have used Young inequality and the following fact in \cite{Lw}:
$$
\int_{\mathbb{R}^n}\left(1+\frac{(|y|-a)^2}{b}\right)^{-r_2}dy\leq C(b^{\frac{n}{2}}+b^{\frac{1}{2}}a^{n-1}),\ {\rm for}\ r_2>\frac{n}{2}.
$$
If $|y|<\frac{|x|-{\rm c}t}{2}$, then $|x-y|-{\rm c}t\geq\frac{||x|-{\rm c}t|}{2}$. Thus, it holds that
\begin{equation*}
\mathcal{I}_2\leq C\bigg(1+\frac{(|x|-{\rm c}t)^2}{1+t}\bigg)^{\!-N/2}\int_{\mathbb{R}^3}\!\!\bigg(1+\frac{(|x-y|-{\rm c}t)^2}{1+t}\bigg)^{\!-N/2}\!\!(1+|y|^2)^{-r_1}dy
\leq C\left(1+\frac{(|x|\!-\!{\rm c}t)^2}{1+t}\right)^{\!-\frac{N}{2}}\!\!.
\end{equation*}
The case: ${\rm c}t-|x|\geq 2\sqrt{1+t}$ can be similarly treated as the case: $|x|-{\rm c}t\geq 2\sqrt{1+t}$. Then, we have completed the proof.
\end{proof}



Without loss of generality, we use $\partial_x^k$ to denote $\partial_x^\alpha$ with $|\alpha|=k$.
By using Duhamel's principle, we can get the representation of the solution $(n^+,\mathbf{m}^+,n^-,\mathbf{m}^-)$ for the nonlinear problem (\ref{2.1}):
\begin{eqnarray}\label{4.1}
 &&\partial_x^k(n^+,\mathbf{m}^+,n^-,\mathbf{m}^-)^T\nonumber\\
 &=&\partial_x^k G(x,t)\ast_x(n^+_0,\mathbf{m}^+_0,n^-_0,\mathbf{m}^-_0)^T
          +\int_0^t\partial_x^k G(\cdot,t-\tau)\ast_x(0,F_1,0,F_2)^T(\cdot,\tau)d\tau,
\end{eqnarray}
where the nonlinear terms $F_1$ and $F_2$ are defined in (\ref{2.2}).

\textbf{Initial propagation.} Use $(\breve{n}^+,\breve{\mathbf{m}}^+,\breve{n}^-,\breve{\mathbf{m}}^-)$ to denote the linear part of the solution in (\ref{4.1}). According to the representation (\ref{4.1}), Proposition \ref{l 3.3}, the initial condition (\ref{1.16}) and  Lemma \ref{l 4.1} for the initial propagation, we can immediately get the following:
\begin{equation}\label{4.2}
\begin{array}{rl}
 &\left|\partial_x^k\left(\!\!
            \begin{array}{ccc}
              \breve{n}^+ \\
              \breve{n}^-
            \end{array}
          \!\!\right)\right|=\!\left|\partial_x^k\!\left(\!
                            \begin{array}{cccc}
                              G_{11} & G_{12} & G_{13} &G_{14}\\
                               G_{31} & G_{32} & G_{33} &G_{34}\\
                            \end{array}
                          \!\right)\!\!\ast_x(n^+_0,\mathbf{m}^+_0,n^-_0,\mathbf{m}^-_0)^T
          \right|\\[3mm]
          \leq\!\! & \left|\partial_x^k\!\bigg(\!
                            \begin{array}{cccc}
                             G_{11}-G_{S} & G_{12}-G_{S} & G_{13}-G_{S} & G_{14}-G_{S}\\
                               G_{31}-G_{S} & G_{32}-G_{S} & G_{33}-G_{S} & G_{34}-G_{S}\\
                            \end{array}
                          \!\bigg)\!\!\ast_x (n^+_0,\mathbf{m}^+_0,n^-_0,\mathbf{m}^-_0)^T\right|\\[3mm]
          &+\left| \left(\!
                            \begin{array}{cccc}
                              G_{S} &G_{S} & G_{S} &  G_{S}\\
                              G_{S} & G_{S} & G_{S} & G_{S}\\
                            \end{array}
                          \!\right)\!\ast_x\partial_x^k (n^+_0,\mathbf{m}^+_0,n^-_0,\mathbf{m}^-_0)^T\right|\\[3mm]
          \leq\!\! &\displaystyle C\epsilon\bigg((1+t)^{-2}\Big(1+\frac{(|x|-{\rm c}_1t)^2}{1+t}\Big)^{-1}+(1+t)^{-2}\Big(1+\frac{(|x|-{\rm c}_2t)^2}{1+t}\Big)^{-1}\bigg),\ \ k=0,1,
\end{array}
\end{equation}
\begin{equation}\label{4.3}
\begin{array}{rl}
 &\!\!\!\!\!\!\left|\partial_x^k\left(\!\!
            \begin{array}{ccc}
              \breve{\mathbf{m}}^+ \\
              \breve{\mathbf{m}}^-
            \end{array}
          \!\!\right)\right|=\!\left|\partial_x^k\!\left(\!
                            \begin{array}{cccc}
                              G_{21} & G_{22} & G_{23} &G_{24}\\
                               G_{41} & G_{42} & G_{43} &G_{44}\\
                            \end{array}
                          \!\right)\!\!\ast_x (n^+_0,\mathbf{m}^+_0,n^-_0,\mathbf{m}^-_0)^T\right|\\[3mm]
          \!\!\!\!\leq\!\! & \left|\partial_x^k\!\left(\!
                            \begin{array}{cccc}
                             G_{21}-G_{S} & G_{22}-G_{S} & G_{23}-G_{S} &G_{24}-G_{S}\\
                               G_{41}-G_{S} & G_{42}-G_{S} & G_{43}-G_{S} &G_{44}-G_{S}\\
                            \end{array}
                          \!\right)\!\!\ast_x (n^+_0,\mathbf{m}^+_0,n^-_0,\mathbf{m}^-_0)^T\right|\\[3mm]
          &+\left| \bigg(\!
                            \begin{array}{cccc}
                              G_{S} & G_{S} & G_{S} & G_{S}\\
                              G_{S} & G_{S} & G_{S2} &G_{S}
                            \end{array}
                          \!\bigg)\!\ast_x \partial_x^k (n^+_0,\mathbf{m}^+_0,n^-_0,\mathbf{m}^-_0)^T\right|\\[3mm]
          \!\!\!\!\leq\!\! &\!\!\displaystyle C\epsilon\bigg((1+t)^{-\frac{3}{2}}\Big(1+\frac{|x|^2}{1+t}\Big)^{-\frac{3}{2}}
          +(1+t)^{-2}\Big(1+\frac{(|x|-{\rm c}_1t)^2}{1+t}\Big)^{-1}+(1+t)^{-2}\Big(1+\frac{(|x|-{\rm c}_2t)^2}{1+t}\Big)^{-1}\bigg).
\end{array}
\end{equation}

\begin{remark}It seems that we can only obtain the corresponding estimate by replacing the decay $(1+t)^{-a}$ in (\ref{4.2})-(\ref{4.3}) with $t^{-a}$, since $G_{S}$ arising from the high frequency of Green's function are singular at $t=0$. Nevertheless, similar to the study on the initial propagation of the one-phase fluid model in Proposition 6.3 of \cite{Ls} by considering two subcases $0\leq t<1$ and $t\geq1$, one can easily achieve the above two estimates of initial propagation for the two-phase fluid model when $t\geq0$. We omit the details here since it has no essential impact on the conclusion.
\end{remark}

\textbf{Nonlinear Coupling.} According to the above initial propagation and the nonlinear convolution estimates established in Lemma \ref{l 4.2}, we give the ansatz on the solution to the nonlinear problem when $k\leq1$:
\begin{equation}\label{4.4}
|\partial_x^k(R^\pm-1)|\lesssim (1+t)^{-2}\Big(1+\frac{|x|^2}{1+t}\Big)^{-1}+(1+t)^{-2}\Big(1+\frac{(|x|\!-\!{\rm c}_1t)^2}{1+t}\Big)^{-1}+(1+t)^{-2}\Big(1+\frac{(|x|\!-\!{\rm c}_2t)^2}{1+t}\Big)^{-1},
\end{equation}
\begin{equation}\label{4.5}
|\partial_x^k\mathbf{m}^\pm|\lesssim (1+t)^{-\frac{3}{2}}\Big(1+\frac{|x|^2}{1+t}\Big)^{-\frac{3}{2}}+(1+t)^{-2}\Big(1+\frac{(|x|\!-\!{\rm c}_1t)^2}{1+t}\Big)^{-1}+(1+t)^{-2}\Big(1+\frac{(|x|\!-\!{\rm c}_2t)^2}{1+t}\Big)^{-1}.
\end{equation}

Due to the non-conservation of the system (\ref{1.1}), and the slower decay rate of the two fraction densities of the system (\ref{1.1}), we have to construct some refined nonlinear convolution estimates, especially for the convolution of Riesz\ wave (or diffusion wave) and Huygens\ wave-I (or Huygens wave-I\!I),  and the convolution of Huygens\ wave-I and Huygens wave-I\!I. In fact, partially inspired by the idea in \cite{lin1,lin2}, we have
\begin{lemma}\label{l 4.2} It holds that

{\rm (}``Huygens\ wave"\ convolved\ with\ ``diffusion\ wave"\ {\rm{)}}
\begin{align}
\mathcal{K}_1=&\int_0^t\!\int_{\mathbb{R}^3}\underline{(1+t-\tau)^{-2}\Big(1+\frac{(|x-y|-{\rm c}(t-\tau))^2}{1+t-\tau}\Big)^{-N}}(1+\tau)^{-3}\Big(1+\frac{|y|^2}{1+\tau}\Big)^{-3}\!\!dyd\tau\nonumber\\[3mm]
\lesssim &\ \underbrace{(1+t)^{-\frac{3}{2}}\Big(1+\frac{|x|^2}{1+t}\Big)^{-\frac{3}{2}}}_{ {\rm or},\ \ (1+t)^{-2}\big(1+\frac{|x|^2}{1+t}\big)^{-1}}+(1+t)^{-2}\Big(1+\frac{(|x|-{\rm c}t)^2}{1+t}\Big)^{-1},\label{4.6}
\end{align}
\noindent {\rm (}``Riesz\ wave"\ convolved\ with\ ``Huygens\ wave"\ {\rm{)}}
\begin{align}
\mathcal{K}_2=&\int_0^t\!\int_{\mathbb{R}^3}(1+t-\tau)^{-\frac{3}{2}}\Big(1+\frac{|x-y|^2}{1+t-\tau}\Big)^{-\frac{3}{2}}(1+\tau)^{-4}\Big(1+\frac{(|y|-{\rm c}\tau)^2}{1+\tau}\Big)^{-2}dyd\tau\nonumber\\[3mm]
\lesssim &\ (1+t)^{-\frac{3}{2}}\Big(1+\frac{|x|^2}{1+t}\Big)^{-\frac{3}{2}}+(1+t)^{-2}\Big(1+\frac{(|x|-{\rm c}t)^2}{1+t}\Big)^{-1},\label{4.7}
\end{align}
{\rm (}``Huygens\ wave\ I"\ convolved\ with\ ``Huygens\ wave\ II"\ {\rm{)}}
\begin{align}
\mathcal{K}_3=&\int_{0}^{t}\!\int_{\mathbb{R}^3}\underline{(1+t-\tau)^{-2}\Big(1+\frac{(|x-y|-{\rm c}_1(t-\tau))^2}{1+t-\tau}\Big)^{-N}}(1+\tau)^{-4}\Big(1+\frac{(|y|-{\rm c}_2\tau)^2}{1+\tau}\Big)^{-2}dyd\tau\nonumber\\[3mm]
\lesssim &\ \underbrace{(1+t)^{-\frac{3}{2}}\Big(1+\frac{|x|^2}{1+t}\Big)^{-\frac{3}{2}}}_{ {\rm or},\ \ (1+t)^{-2}\big(1+\frac{|x|^2}{1+t}\big)^{-1}}+(1+t)^{-2}\Big(1+\frac{(|x|-{\rm c}_1t)^2}{1+t}\Big)^{-1}+(1+t)^{-2}\Big(1+\frac{(|x|-{\rm c}_2t)^2}{1+t}\Big)^{-1}.\label{4.8}
\end{align}
Here the constant $N>0$ can be arbitrarily large, and the positive constants ${\rm c},{\rm c}_1,{\rm c}_2$ denote the propagation speed of the Huygens waves.
\end{lemma}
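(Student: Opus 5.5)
We prove the three bounds of Lemma \ref{l 4.2} one at a time. Each proof splits space-time so that, on every piece, one factor of the integrand can be bounded pointwise and the other integrated exactly. Two tools are used throughout. The first is the radial integral from \cite{Lw} quoted in the proof of Lemma \ref{l 4.1},
$$\int_{\mathbb{R}^3}\Big(1+\frac{(|y|-a)^2}{b}\Big)^{-N}dy\lesssim b^{\frac32}+b^{\frac12}a^2 .$$
It gives $\|(1+\tau)^{-4}(1+(|y|-{\rm c}\tau)^2/(1+\tau))^{-2}\|_{L^1_y}\lesssim(1+\tau)^{-3/2}$, and the analogous $L^1$ norm of the Huygens kernel $(1+t-\tau)^{-2}(\cdots)^{-N}$ is $\lesssim(1+t-\tau)^{1/2}$. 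The second tool is the elementary spatial splitting $|y|\ge|x|/2$ versus $|x-y|\ge|x|/2$. We always split the time interval at $\tau=t/2$.

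\textbf{$\mathcal K_1$ and $\mathcal K_2$.} For $\tau\in[t/2,t]$ in $\mathcal K_1$, we bound the diffusion factor pointwise by $(1+t)^{-3}(1+|y|^2/(1+t))^{-3}$. Then we split the space-time region into $|x|\le 2\sqrt{1+t}$, the zone $|x|\ge {\rm c}t+\sqrt{1+t}$, and the zone near the cone.
\begin{itemize}
\item Near the cone and inside $|x|\le 2\sqrt{1+t}$, integrating the Huygens kernel gives $(1+t)^{-3}\int_{t/2}^t(1+t-\tau)^{1/2}d\tau\lesssim(1+t)^{-3/2}$, which is compatible with the D-wave.
\item Away from the cone, either $|x-y|$ is far from ${\rm c}(t-\tau)$, so the kernel gives $(1+(|x|-{\rm c}t)^2/(1+t))^{-N}$, or $|y|\gtrsim||x|-{\rm c}t|$, so the diffusion factor supplies the spatial decay.
\end{itemize}
For $\tau\in[0,t/2]$ we bound the Huygens factor pointwise by $(1+t)^{-2}(1+(|x-y|-{\rm c}(t-\tau))^2/(1+t))^{-N}$. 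We then observe that the diffusion mass $\int(1+\tau)^{-3}(1+|y|^2/(1+\tau))^{-3}dy\lesssim(1+\tau)^{-3/2}$ is integrable in $\tau$, and that this mass concentrates in $|y|\lesssim\sqrt{1+\tau}$, which shifts the cone position only within the H-wave scale. The exponent $-1$ (rather than $-3/2$) on the H-wave arises here: when $|x|$ is far from the cone, $\tau$ and $|y|$ must be large, and these are traded against one another. This is the interchange between temporal and spatial variables announced in the introduction.

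$\mathcal K_2$ is treated symmetrically, with the roles of the two factors reversed.
\begin{itemize}
\item For $\tau\le t/2$, the Riesz factor is taken pointwise as $(1+t)^{-3/2}(1+|x-y|^2/(1+t))^{-3/2}$, and the source factor contributes mass $(1+\tau)^{-3/2}$. Splitting into $|y|\ge|x|/2$ and $|y|<|x|/2$ yields both the D-wave and, where $|y|\sim{\rm c}\tau$ is close to $|x|$, the H-wave.
\item For $\tau\ge t/2$, the source factor is taken pointwise and we integrate the Riesz kernel. Its $L^1$ norm grows like $\log$, which must be avoided. We therefore keep the spatial weight and use the radial formula in spherical coordinates centred at $x$.
\end{itemize}

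\textbf{$\mathcal K_3$, the main obstacle.} Two H-waves of speeds ${\rm c}_1<{\rm c}_2$ interact. We split $x$-space into the regions $D_1=\{|x|\le {\rm c}_1t/2\}$, $D_{21}=\{|{\rm c}_1t-|x||\le\sqrt{1+t}\}$, $D_{22}=\{{\rm c}_1t/2\le|x|\le{\rm c}_1t-\sqrt{1+t}\}$, the region between the two cones, the region near the ${\rm c}_2$ cone, and the exterior. In each region the dominant $\tau$ is located from the geometric relation
$$|x|\approx{\rm c}_1(t-\tau)+{\rm c}_2\tau\ \pm\ |y|\text{-angle},$$
since the support of the integrand concentrates where $|y|\approx{\rm c}_2\tau$ and $|x-y|\approx{\rm c}_1(t-\tau)$. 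Hence for $x$ between the cones the relevant time is $\tau\sim(|x|-{\rm c}_1t)/({\rm c}_2-{\rm c}_1)$, and inside the inner cone, by the reverse triangle inequality, it is $\tau\sim({\rm c}_1t-|x|)/({\rm c}_1+{\rm c}_2)$. On each time piece we pick which factor to bound pointwise. Away from the dominant $\tau$, one kernel decays like $(\cdot)^{-N}$ in the distance to the cone; at the dominant $\tau$, the factor $(1+\tau)^{-4}$ converts $\tau\sim$ distance into the decay $(1+(|x|-{\rm c}_it)^2/(1+t))^{-1}$.

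The delicate subcase is $D_{22}$ with $\tfrac{{\rm c}_1t-|x|}{2({\rm c}_1+{\rm c}_2)}\le\tau\le t/2$. There, crude $L^1$–$L^\infty$ bounds lose a factor $\log(1+t)$. Instead, we write $y$ in spherical coordinates centred at the origin, $|y|=r$, with angle $\theta$ to $x$, so that
$$|x-y|^2=|x|^2+r^2-2|x|r\cos\theta .$$
Integrating the ${\rm c}_1$-kernel over $\cos\theta$ then costs only $\sqrt{1+t-\tau}\,(1+t-\tau)/(|x|r)$. Substituting $r\approx{\rm c}_2\tau$ and $|x|\gtrsim t$, the remaining $\tau$-integral of $(1+\tau)^{-4}\tau^{2}\cdot\tau^{-1}(1+\tau)^{1/2}$ is convergent at the lower limit $\tau\sim{\rm c}_1t-|x|$. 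This gives $(1+t)^{-2}(1+({\rm c}_1t-|x|))^{-1}$ up to constants, hence $(1+t)^{-2}(1+(|x|-{\rm c}_1t)^2/(1+t))^{-1}$ after using ${\rm c}_1t-|x|\ge\sqrt{1+t}$.

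The remaining regions, including $D_1$, where the bound is given by the D-wave alternative, and the exterior, where both kernels give $(\cdot)^{-N}$ decay, are handled with the same two tools as $\mathcal K_1$.
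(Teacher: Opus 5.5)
Your treatment of $\mathcal K_3$ follows the paper's architecture: the same cone-adapted regions, the same critical times $\frac{{\rm c}_1t-|x|}{2({\rm c}_1+{\rm c}_2)}$ and $\frac{|x|-{\rm c}_1t}{2({\rm c}_2-{\rm c}_1)}$, and the same spherical-coordinate integration in $D_{22}$ as the paper's Case 3.2. However, several steps fail as written.

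In $\mathcal K_1$ for $\tau\in[t/2,t]$ you bound the near-cone region by $(1+t)^{-3}\int_{t/2}^t(1+t-\tau)^{1/2}d\tau\lesssim(1+t)^{-3/2}$. This is not admissible: at $|x|={\rm c}t$ the right-hand side of the $\mathcal K_1$ bound is only of size $(1+t)^{-2}$. Inside $|x|\le 2\sqrt{1+t}$ the same bound gives only the first D-wave alternative, not $(1+t)^{-2}(1+\frac{|x|^2}{1+t})^{-1}$.
\begin{itemize}
\item Near the cone, use $t-\tau\le t/2$. Either $|y|\ge|x|/4$, where the diffusion factor is $\lesssim(1+t)^{-6}$, or $|x-y|-{\rm c}(t-\tau)\ge\frac{3|x|}{4}-\frac{{\rm c}t}{2}\gtrsim t$, where the kernel is negligible.
\item Inside $|x|\le2\sqrt{1+t}$, at lag $s=t-\tau$ the kernel lives on $|y|\approx{\rm c}s$. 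There the diffusion factor is $(1+t)^{-3}(1+\frac{s^2}{1+t})^{-3}$, which gives $(1+t)^{-9/4}$.
\end{itemize}
Your dichotomy ``$|x-y|$ far from ${\rm c}(t-\tau)$ gives decay in $|x|-{\rm c}t$'' is also false for $\tau\ge t/2$ inside the cone. The shell $|x-y|={\rm c}(t-\tau)$ sweeps through the diffusion core at $t-\tau\approx|x|/{\rm c}$ and contributes $\sim(1+t)^{-3/2}|x|^{-3/2}$, which must be absorbed by the D-wave term.

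In $\mathcal K_3$, it is wrong that only $D_{22}$ with $\frac{{\rm c}_1t-|x|}{2({\rm c}_1+{\rm c}_2)}\le\tau\le\frac t2$ escapes your two elementary tools.
\begin{itemize}
\item The mirror case just outside the ${\rm c}_1$-cone behaves the same way. Take $d:=|x|-{\rm c}_1t\ge\sqrt{1+t}$ and $\frac{d}{2({\rm c}_2-{\rm c}_1)}\le\tau\le\frac t2$. The two $L^\infty$--$L^1$ pairings give $(1+t)^{-2}d^{-1/2}$ and $(1+t)^{1/2}d^{-3}$, and neither is $\lesssim(1+t)^{-1}d^{-2}$ once $d\gg t^{2/3}$.
\item The same failure occurs in $D_{22}$ for $\frac t2\le\tau\le t-\frac{{\rm c}_1t-|x|}{4{\rm c}_1}$, where the shells $|x-y|\approx{\rm c}_1(t-\tau)$ and $|y|\approx{\rm c}_2\tau$ may still intersect. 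The crude bounds give $(1+t)^{-5/2}$ and $(1+t)^{-3/2}({\rm c}_1t-|x|)^{-1}$, both too large once ${\rm c}_1t-|x|\gg t^{3/4}$.
\end{itemize}
The paper uses spherical coordinates in these ranges too (its Cases 3.3, 3.4 and 5.2), and Lemma \ref{A.8} in several others. Your $\cos\theta$-integration yields $(1+t-\tau)^{-1/2}|x|^{-1}(1+\tau)^{-5/2}$ per unit time and covers all these ranges, so invoke it there.

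In $D_{22}$ that integration produces $(1+t)^{-3/2}(1+{\rm c}_1t-|x|)^{-3/2}$, which is larger than the $(1+t)^{-2}(1+{\rm c}_1t-|x|)^{-1}$ you state. The target still follows from $\sqrt{1+t}\le{\rm c}_1t-|x|\lesssim t$.

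Finally, the H-wave-II factor carries exponent $2$, not $N$. In the exterior region, and in $D_{12}$ for small $\tau$, spending one power on the decay $(1+\frac{(|x|-{\rm c}_it)^2}{1+t})^{-1}$ leaves $(1+\frac{(|y|-{\rm c}_2\tau)^2}{1+\tau})^{-1}$. That factor is not integrable on $\mathbb{R}^3$, so your radial formula does not apply. The paper closes these cases with Lemma \ref{A.7} and an $L^{5/2}$--$L^{5/3}$ H\"older pairing.
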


\begin{remark}Note that the underlined two terms in (\ref{4.6}) and (\ref{4.8}) are different from the previous nonlinear estimates given in \cite{Ls,Lw}, where an additional $(1+t)^{-\frac{1}{2}}$-decay rate is from the conservative structure of the single-phase model: compressible Navier-Stokes equations. In fact, the conservative structure is rather important in \cite{Ls,Lw}, since the decay rate of the Huygens wave in the $L^p$ framework with $p<2$ is slower than that of  the heat kernel. 
Therefore, to overcome the difficulties induced by the non-conservative nature, we shall revisit this nonlinear convolution estimate via a refined space-time decomposition argument together with fully exploiting the interchange between temporal and spatial variables within specific regions, thereby deriving satisfactory pointwise space-time estimates.
\end{remark}

Owing to its complexity, the detailed proof of Lemma \ref{l 4.2} (especially the proof of (\ref{4.8})) is deferred to the Appendix.

With these nonlinear estimates in hand, we shall close the ansatz on the solution of the nonlinear problem by substituting (\ref{4.4})-(\ref{4.5}) into the representation of the solution $(n^+,\mathbf{m}^+,n^-,\mathbf{m}^-)$ in (\ref{4.1})-(\ref{4.2}) and using the aforementioned nonlinear convolution estimates.  In particular, we have to split Green's function into the regular term $G_{ij}-G_{S}$ and the singular term $G_{S}$. Then, one can put all of the derivatives on $G_{ij}-G_{S}$ to deal with the convolution between the regular part $G_{ij}-G_{S}$ and the nonlinear terms, however, one have to put all of the derivatives on the nonlinear terms to deal with the convolution of the singular part and the nonlinear terms.

We just take nonlinear coupling of the momenta $\mathbf{m}^\pm$ for example, since there exist the additional Riesz waves in the related entries of Green's function. To facilitate the description, we use $\tilde{\mathbf{m}}^\pm$ to denote the nonlinear part of $\mathbf{m}^\pm$ in (\ref{4.5}). Then by using Proposition \ref{l 3.3} and the nonlinear convolution estimates in Lemma \ref{l 4.2}, one has for $0\leq k\leq1$ that
\begin{align}\label{5.12}
|\partial_x^k\tilde{\mathbf{m}}^+|\leq &\int_0^t|\partial_x^k(G_{22}-G_{S})(\cdot,t-\tau)\ast F_1(\cdot,\tau)|d\tau+\int_0^t|\partial_x^k(G_{24}-G_{S})(\cdot,t-\tau)\ast F_2(\cdot,\tau)|d\tau\nonumber\\
&+\int_0^t|G_{S}(\cdot,t-\tau)\ast \partial_x^k (F_1(\cdot,\tau),F_2(\cdot,\tau))|d\tau\nonumber\\
\leq &\ 2C\epsilon\bigg((1+t)^{-\frac{3}{2}}\Big(1+\frac{|x|^2}{1+t}\Big)^{-\frac{3}{2}}+(1+t)^{-2}\Big(1+\frac{(|x|\!-\!{\rm c}_1t)^2}{1+t}\Big)^{-1}+(1+t)^{-2}\Big(1+\frac{(|x|\!-\!{\rm c}_2t)^2}{1+t}\Big)^{-1}\bigg)\nonumber\\
      &+\int_0^t|G_{S}(\cdot,t-\tau)\ast \partial_x^k (\partial_x(n^+\partial_x\mathbf{m}^++n^-\partial_x\mathbf{m}^-)+\cdots)(\cdot,\tau)|d\tau\nonumber.
\end{align}
Obviously, when $k=0$, we have to use the pointwise ansatz of the unknowns $(n^\pm,\mathbf{m}^\pm)$ and its first derivative. Additionally, when $k=1$, it further requires the $L^\infty$-estimate of  $\partial_x^3\mathbf{m}^\pm$ when dealing with the singular part $G_S$ (note that $G_S$ is like a delta function with exponential temporal decay rate) convolved with the nonlinear terms. Thus, we can close the ansatz (\ref{4.4})-(\ref{4.5}) in $H^5$-framework based on the Sobolev embedding theorem, and then complete the proof of Theorem \ref{l 1.1}.

\section{Appendix: Analytic tools}\label{1section_appendix}

\subsection{Some useful lemmas}

The first one is used to derive the pointwise estimates of Green's function in the low frequency.


\begin{lemma}\label{A.1}[Wu-Wang\cite{Wu4}] If there exists a
constant $C>0$ such that when $|\xi|\leq1$, $\hat{f}(\xi,t)$ satisfies
$$
|D_\xi^\beta(\xi^\alpha\hat{f}(\xi,t))|\leq
C(|\xi|^{(|\alpha|-|\beta|)_+}
+|\xi|^{|\alpha|}t^{|\beta|/2})(1+(t|\xi|^2))^a \exp(-b|\xi|^2t),
$$
for some constant $b>0$ and any multi-indexes $\alpha, \beta$ with $|\beta|\leq 2N$, then
\begin{equation}\label{7.1}
|D_x^\alpha f(x,t)|\leq C_N (1+t)^{-(n+|\alpha|)}B_N(|x|,t),
\end{equation}
where $a$ is any fixed integer, $(e)_+=\max(0,e)$ and
$$
B_N(|x|,t)=\Big(1+\frac{|x|^2}{1+t}\Big)^{-N}.
$$
 \end{lemma}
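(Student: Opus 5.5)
The plan is the standard weighted Fourier-inversion argument: bound $D_x^\alpha f$ once without spatial weight and once with a weight $|x|^{2N}$, then combine the two bounds. Since $\hat f$ is supported in $|\xi|\le 1$, we have
\[
D_x^\alpha f(x,t)=(2\pi)^{-n}\int_{|\xi|\le1}e^{{\rm i}x\cdot\xi}({\rm i}\xi)^\alpha\hat f(\xi,t)\,d\xi .
\]

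\emph{Step 1 (unweighted bound).} Take $\beta=0$ in the hypothesis and bound the integrand by its modulus:
\[
|D_x^\alpha f|\lesssim\int_{|\xi|\le1}|\xi|^{|\alpha|}(1+t|\xi|^2)^a e^{-b|\xi|^2t}\,d\xi .
\]
For $t\le1$ this is $O(1)$. For $t\ge1$ substitute $\eta=\sqrt t\,\xi$. The factor $(1+|\eta|^2)^a$ is absorbed by part of $e^{-b|\eta|^2}$, and one reads off the power of $t$ produced by $d\xi$ and by $|\xi|^{|\alpha|}$. I will track this power carefully against the exponent $(1+t)^{-(n+|\alpha|)}$ as written in (\ref{7.1}), keeping the normalization of the source \cite{Wu4} from which the lemma is quoted.

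\emph{Step 2 (weighted bound).} For a multi-index $\beta$ with $|\beta|=2N$, write $x^\beta D_x^\alpha f$ as the inverse transform of $({\rm i}D_\xi)^\beta\big(({\rm i}\xi)^\alpha\hat f\big)$; this is exactly the quantity controlled in the hypothesis. The integration by parts in $\xi$ produces no boundary terms at $|\xi|=1$ because the low-frequency part carries a smooth cutoff. Inserting the hypothesis gives
\[
|x^\beta D_x^\alpha f|\lesssim\int_{|\xi|\le1}\Big(|\xi|^{(|\alpha|-2N)_+}+|\xi|^{|\alpha|}t^{N}\Big)(1+t|\xi|^2)^ae^{-b|\xi|^2t}\,d\xi .
\]
The second term has the same structure as in Step 1, multiplied by $t^N$. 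For the first term, when $2N>|\alpha|$ the weight $|\xi|^{(|\alpha|-2N)_+}$ equals $1$, so the integral is still finite. On the region $|\xi|\ge t^{-1/2}$ one has $1\le t^{N-|\alpha|/2}|\xi|^{2N-|\alpha|}\cdot|\xi|^{|\alpha|-2N}$, so this region can be compared with the $t^N|\xi|^{|\alpha|}$ term. The complementary ball $|\xi|\le t^{-1/2}$ contributes at most $t^{-n/2}$. Hence both pieces are bounded by $t^N$ times the Step 1 rate.

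\emph{Step 3 (combination).} Summing over $|\beta|=2N$ gives $|x|^{2N}|D_x^\alpha f|\lesssim (1+t)^N\cdot(\text{rate of Step 1})$. Adding the Step 1 bound yields
\[
\Big(1+\frac{|x|^2}{1+t}\Big)^{N}|D_x^\alpha f|\lesssim\text{rate of Step 1},
\]
which is the factor $B_N(|x|,t)$ in (\ref{7.1}). Since the hypothesis is assumed for all $|\beta|\le2N$ with $N$ arbitrary, $N$ is arbitrary here as well.

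\emph{Main obstacle.} The delicate point is the first term in Step 2. The factor $|\xi|^{(|\alpha|-|\beta|)_+}$ gives no smallness near $\xi=0$. One must check that it still produces the time rate of Step 1 and not a worse one. This is where the splitting $|\xi|\lessgtr t^{-1/2}$ is used, and where the exact time exponent claimed in (\ref{7.1}) has to be matched against the scaling of the hypothesis.
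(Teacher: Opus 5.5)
\textbf{The gap is the time exponent.} Your method is the standard weighted Fourier-inversion argument. The paper itself gives no proof and only cites \cite{Wu4}. But the one point you left open, matching against $(1+t)^{-(n+|\alpha|)}$ ``as written,'' cannot be completed. The scaling in your Step 1 settles the rate: for $t\ge1$, with $\eta=\sqrt t\,\xi$,
\[
\int_{|\xi|\le1}|\xi|^{|\alpha|}(1+t|\xi|^2)^a e^{-b|\xi|^2t}\,d\xi\le t^{-\frac{n+|\alpha|}{2}}\int_{\mathbb{R}^n}|\eta|^{|\alpha|}(1+|\eta|^2)^a e^{-b|\eta|^2}\,d\eta ,
\]
so your argument gives $(1+t)^{-(n+|\alpha|)/2}$. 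No argument can give the printed $(1+t)^{-(n+|\alpha|)}$, because that statement is false.

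Here is a counterexample. Take $\chi\in C_c^\infty(\{|\xi|<1\})$ with $\chi\equiv1$ near $0$, and set $\hat f(\xi,t)=\chi(\xi)(t|\xi|^2)^Ne^{-t|\xi|^2}=\chi(\xi)g(\sqrt t\,\xi)$ with $g(\eta)=|\eta|^{2N}e^{-|\eta|^2}$. Leibniz together with $|D^\gamma g(\eta)|\lesssim|\eta|^{2N-|\gamma|}(1+|\eta|)^{C}e^{-|\eta|^2}$ for $|\gamma|\le2N$ gives $|D_\xi^\beta(\xi^\alpha\hat f)|\lesssim|\xi|^{|\alpha|}t^{|\beta|/2}(1+t|\xi|^2)^{a}e^{-t|\xi|^2}$ for every $\alpha$ and every $|\beta|\le2N$. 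So the hypothesis holds. Yet $f(0,t)=(2\pi)^{-n}t^{-n/2}\int\chi(\eta/\sqrt t)|\eta|^{2N}e^{-|\eta|^2}d\eta\sim c\,t^{-n/2}$ with $c>0$. If you only need $\alpha=0$, $\chi(\xi)e^{-|\xi|^2t}$ already does this.

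The exponent in (\ref{7.1}) is therefore a misprint for $-(n+|\alpha|)/2$. That is how the paper uses the lemma in Section~3, where the low-frequency kernels are bounded by $(1+t)^{-\frac{3+|\alpha|}{2}}$. You need to say explicitly that you are proving this corrected statement, rather than promising to reach the printed rate.

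With that correction, Steps~2--3 are right and close the proof. Three smaller points:
\begin{itemize}
\item The ``main obstacle'' is not really one. If $2N\ge|\alpha|$, then directly $\int_{|\xi|\le1}(1+t|\xi|^2)^ae^{-b|\xi|^2t}d\xi\lesssim(1+t)^{-n/2}\le(1+t)^{N-\frac{n+|\alpha|}{2}}$, with no splitting needed.
\item You omitted the case $2N<|\alpha|$, where $\int|\xi|^{|\alpha|-2N}(1+t|\xi|^2)^ae^{-b|\xi|^2t}d\xi\lesssim(1+t)^{N-\frac{n+|\alpha|}{2}}$. Your displayed inequality $1\le t^{N-|\alpha|/2}|\xi|^{2N-|\alpha|}|\xi|^{|\alpha|-2N}$ reduces to $1\le t^{N-|\alpha|/2}$. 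The comparison you actually want is $1\le t^{N}|\xi|^{|\alpha|}$ on $\{|\xi|\ge t^{-1/2}\}$, for $t\ge1$ and $2N\ge|\alpha|$.
\item Integrating by parts in $\xi$ without boundary terms requires $\hat f(\cdot,t)$ to be smooth and compactly supported in the unit ball. This is implicit in the statement and should be stated as an assumption.
\end{itemize}
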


The following lemma is used to deal with the terms containing double Riesz operator with the symbol $\frac{\xi\xi^T}{|\xi|^2}$ in low frequency part of Green's function. For readers' convenience, we write down the proof since it was just stated in the previous works without a detailed proof.

\begin{lemma}\label{A.2}\cite{Wu4}
Let $t>0$ and $x\in\mathbb{R}^n$ with $n\geq2$. Suppose that $f(x,t)$ satisfies
 \begin{equation}\label{8.10}
 \begin{array}{rl}
&\ \ \ \ \ |f(x,t)|\lesssim \big(1+\frac{|x|^2}{1+t}\big)^{-r_1},\ \ \ \ \ \ \ \ \ r_1>\frac{n}{2},\\
&|\nabla f(x,t)|\lesssim (1+t)^{-\frac{1}{2}}\big(1+\frac{|x|^2}{1+t}\big)^{-r_2},\ \ \ r_2>\frac{n+1}{2}.
\end{array}
\end{equation}
Then it holds that
 \begin{equation}\label{8.11}
\Big|\nabla{\rm div}\Delta^{-1}f(x,t)\Big|\lesssim  \Big(1+\frac{|x|^2}{1+t}\Big)^{-\frac{n}{2}}.
\end{equation}
\end{lemma}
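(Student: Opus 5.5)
The plan is to work in Fourier variables and reduce everything to a one‑dimensional weighted estimate. Write $\nabla{\rm div}\Delta^{-1}f$ as the operator with symbol $\frac{\xi\xi^T}{|\xi|^2}\hat f$. Equivalently, in physical space it is the convolution of $f$ with the kernel $\partial_i\partial_j\Gamma$, where $\Gamma(x)=c_n|x|^{2-n}$ is the fundamental solution of the Laplacian. This kernel is a Calder\'on–Zygmund kernel $K(x)=\frac{c_n}{|x|^n}\big(\delta_{ij}-n\frac{x_ix_j}{|x|^2}\big)$ plus a multiple of $\delta_0$. It has mean zero on spheres, so the principal value is well defined.

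By the scaling $x=\sqrt{1+t}\,z$ the problem becomes time‑independent. Set $g(z)=f(\sqrt{1+t}z,t)$. The hypotheses then read $|g(z)|\lesssim(1+|z|^2)^{-r_1}$ and $|\nabla g(z)|\lesssim(1+|z|^2)^{-r_2}$, because the factor $(1+t)^{-1/2}$ in \eqref{8.10} is exactly absorbed by the chain rule. The operator $\nabla{\rm div}\Delta^{-1}$ is homogeneous of degree zero and commutes with dilations. So it suffices to prove $|\nabla{\rm div}\Delta^{-1}g(z)|\lesssim(1+|z|)^{-n}$ with constants independent of $t$.

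Fix $z$ and split the principal‑value integral $\int K(z-y)g(y)\,dy$ into three regions: $|y-z|\le 1$ (or $\le\frac{|z|}{2}$ when $|z|\ge2$), $|y|\le\frac{|z|}{2}$, and the remainder.

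\begin{itemize}
\item \emph{Near region.} Use the cancellation of $K$ on spheres: replace $g(y)$ by $g(y)-g(z)$ and bound it by $|y-z|\sup|\nabla g|$ over the ball. This gives the bound $\lesssim\int_{|w|\le\rho}|w|^{1-n}(1+|z|)^{-2r_2}\,dw\lesssim\rho(1+|z|)^{-2r_2}$. With $\rho\sim|z|/2$ this is $(1+|z|)^{1-2r_2}$, and this is exactly where $r_2>\frac{n+1}{2}$ yields $(1+|z|)^{-n}$. The outer annulus part $\rho_0<|w|\le\rho$ handled this way needs no cancellation beyond the truncation, since spheres are symmetric. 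Alternatively, bound $|g|$ there by $(1+|z|)^{-2r_1}\log(1+|z|)$, and $2r_1>n$ absorbs the log.
\item \emph{Region $|y|\le|z|/2$.} Here $|K(z-y)|\lesssim|z|^{-n}$, and $\int|g|\lesssim1$ since $2r_1>n$. This gives $|z|^{-n}$.
\item \emph{Remaining far region.} Here $|y|\ge|z|/2$ and $|y-z|\ge|z|/2$, so $|K(z-y)|\lesssim|z|^{-n}$ and the integral of $|g|$ is again bounded. This also gives $|z|^{-n}$.
\end{itemize}

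For $|z|\le2$ only uniform boundedness is needed. It follows from the near‑region cancellation estimate with $\rho=1$ together with the $L^1$ bound on $g$ outside.

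The main obstacle I expect is the near region. Here the mean‑zero property of $K$ must be used carefully on the annulus rather than the full ball, and the gradient decay must be tracked relative to $|z|$ (not $|y|$), which is valid because $|y|\sim|z|$ there. That is precisely why the threshold $r_2>\frac{n+1}{2}$ appears. The delta‑function part of the kernel contributes $|g(z)|\lesssim(1+|z|)^{-2r_1}\le(1+|z|)^{-n}$ and is harmless. Undoing the scaling, $|z|^2=\frac{|x|^2}{1+t}$, gives \eqref{8.11}.
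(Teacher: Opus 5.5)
The paper gives no proof of this lemma to compare with. The sentence just before it says ``we write down the proof'', but the statement is followed directly by Corollary~\ref{A.3}. Judged on its own, your argument is correct.

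The rescaling $g(z)=f(\sqrt{1+t}\,z,t)$ removes $t$ exactly. The factor $(1+t)^{-1/2}$ is absorbed by the chain rule, and the degree-zero multiplier $\xi\xi^T/|\xi|^2$ commutes with dilations. Your splitting of $\mathrm{p.v.}\,K*g+\frac{1}{n}\delta_{ij}g$ then gives $O((1+|z|)^{-n})$ in every region:
\begin{itemize}
\item The regions $|y|\le|z|/2$ and $\{|y|\ge|z|/2,\ |y-z|\ge|z|/2\}$ follow from $|K|\lesssim|z|^{-n}$ and $\|g\|_{L^1}<\infty$; this is where $2r_1>n$ is used.
\item The delta term follows from $2r_1\ge n$.
\item The near region follows from the vanishing spherical means of $K$ and the mean value theorem. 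This gives $|z|(1+|z|)^{-2r_2}$, which uses exactly $2r_2\ge n+1$.
\end{itemize}

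Your alternative for the near region is also correct. It uses cancellation only on $|w|\le1$ and the crude bound $|K(w)||g(z-w)|\lesssim|w|^{-n}(1+|z|)^{-2r_1}$ on $1\le|w|\le|z|/2$, giving $(1+|z|)^{-2r_2}+(1+|z|)^{-2r_1}\log(2+|z|)$. It actually shows that the hypothesis $r_2>\frac{n+1}{2}$ is stronger than needed; for instance, $r_2\ge\frac{n}{2}$ suffices. So your closing claim that the near region is ``precisely why'' the threshold $\frac{n+1}{2}$ appears is true only of your first route.

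Two small corrections:
\begin{itemize}
\item For $n=2$ the fundamental solution is $\frac{1}{2\pi}\log|x|$, not $c_n|x|^{2-n}$. The kernel $K$ you write down is nonetheless correct for all $n\ge2$.
\item Add one line justifying that, for $g\in C^1\cap L^1\cap L^\infty$, the Fourier-multiplier definition of $\nabla{\rm div}\Delta^{-1}g$ agrees pointwise with the principal-value formula. For example: $g\in L^2$, the two agree almost everywhere by standard Calder\'on--Zygmund theory, and the principal-value expression is continuous in $z$.
\end{itemize}
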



\begin{corollary}\label{A.3}\cite{Wu4}
Suppose that $\hat{f}(\xi,t)=\frac{\xi\xi^T}{|\xi|^2}\chi_1(\xi)e^{-a|\xi|^2t+\mathcal{O}(|\xi|^3)t}$ in the Fourier space, where $a$ is a positive constant and $\chi_1(\xi)$ is the cutoff function for the low frequency.   Then, it holds that
\begin{equation}\label{8.18}
|\mathcal{F}^{-1}(\xi^\alpha\hat{f}(\xi,t))|\lesssim (1+t)^{-\frac{n+|\alpha|}{2}}\big(1+\frac{|x|^2}{1+t}\big)^{-\frac{n+|\alpha|}{2}}.
\end{equation}
\end{corollary}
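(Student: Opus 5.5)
The statement to prove is Corollary \ref{A.3}: the pointwise bound (\ref{8.18}) for $\mathcal{F}^{-1}(\xi^\alpha\hat f)$ with $\hat f=\frac{\xi\xi^T}{|\xi|^2}\chi_1(\xi)e^{-a|\xi|^2t+\mathcal O(|\xi|^3)t}$. I would reduce it to Lemma \ref{A.2} (for the Riesz part) and Lemma \ref{A.1} (for the smooth part).

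\textbf{Step 1 (factorization).} Write $\xi^\alpha\hat f=\frac{\xi\xi^T}{|\xi|^2}\hat g(\xi,t)$ with $\hat g:=\xi^\alpha\chi_1(\xi)e^{-a|\xi|^2t+\mathcal O(|\xi|^3)t}$. In physical space this says $\mathcal F^{-1}(\xi^\alpha\hat f)=-\nabla{\rm div}\Delta^{-1}g$, taken componentwise with the matrix structure. Since the exponent is analytic and $|\xi|\le\eta_1$, we may absorb $\mathcal O(|\xi|^3)t\le \frac a2|\xi|^2t$. Then $\hat g$ satisfies the symbol bounds of Lemma \ref{A.1} with $a=0$ and $b=a/2$. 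The bounds $|D_\xi^\beta(\xi^\alpha\hat g)|$ follow from the chain rule, each $\xi$-derivative producing either $|\xi|^{-1}$ or $t|\xi|\lesssim t^{1/2}(1+t|\xi|^2)^{1/2}$. Hence for every $N$,
$$|D_x^\gamma g(x,t)|\lesssim (1+t)^{-\frac{n+|\alpha|+|\gamma|}{2}}\Big(1+\frac{|x|^2}{1+t}\Big)^{-N}.$$
This is the parabolic scaling version; the exponent $n+|\alpha|$ in (\ref{7.1}) should be read with the usual half power.

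\textbf{Step 2 (apply Lemma \ref{A.2} to a rescaled function).} Set $\tilde f(x,t):=(1+t)^{\frac{n+|\alpha|}{2}}g(x,t)$. By Step 1 with $\gamma=0$ and $|\gamma|=1$, the function $\tilde f$ satisfies (\ref{8.10}) for any $r_1,r_2$, since $N$ is arbitrary, in particular $r_1>\frac n2$ and $r_2>\frac{n+1}2$. Lemma \ref{A.2} then gives
$$|\nabla{\rm div}\Delta^{-1}\tilde f|\lesssim\Big(1+\frac{|x|^2}{1+t}\Big)^{-\frac n2}.$$
Multiplying back yields (\ref{8.18}) for $\alpha=0$ and gives the decay $(1+t)^{-\frac{n+|\alpha|}{2}}$ in general, but only with spatial weight exponent $\frac n2$.

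\textbf{Step 3 (sharpen the spatial exponent to $\frac{n+|\alpha|}{2}$).} This is the main obstacle, because Lemma \ref{A.2} as stated caps the spatial exponent at $\frac n2$. I would redistribute derivatives. Write $\xi^\alpha=\xi^{\alpha'}\xi_j$ and note that $\xi_j\frac{\xi\xi^T}{|\xi|^2}$ is homogeneous of degree $1$. The kernel of $\partial_j\nabla{\rm div}\Delta^{-1}$ is homogeneous of degree $-(n+1)$ and is a Calderón–Zygmund-type kernel of higher order.

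I would rerun the proof of Lemma \ref{A.2} at this level, splitting the convolution into three regions:
\begin{itemize}
\item $|y|\le|x|/2$: Taylor expand the kernel, using the moment bounds of $g$;
\item $|x-y|\le|x|/2$: use the decay of $g$ there;
\item the remaining far region: bound the homogeneous kernel by $|x|^{-(n+|\alpha|)}$.
\end{itemize}
Combined with the region $|x|^2\le 1+t$, where the trivial bound $(1+t)^{-\frac{n+|\alpha|}{2}}$ comes from $L^1$ of the symbol, this gives
$$|x|^{-(n+|\alpha|)}\sim(1+t)^{-\frac{n+|\alpha|}{2}}\Big(\frac{|x|^2}{1+t}\Big)^{-\frac{n+|\alpha|}{2}}.$$

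There is a subtlety in the first region. The moment terms of $g$ cancel only up to the order permitted by its Fourier vanishing at $\xi=0$, which is exactly $|\alpha|$. This is precisely why the exponent stops at $\frac{n+|\alpha|}{2}$, and I expect verifying this moment-vanishing bookkeeping to be the delicate step.
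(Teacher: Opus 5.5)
Your argument is correct in substance, but it does not follow the paper, because the paper gives no argument at all. It labels the statement a corollary of Lemma \ref{A.2} and attributes it to \cite{Wu4}.

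Your Step 2 is exactly that reduction, and you are right that it settles only $\alpha=0$. As stated, Lemma \ref{A.2} caps the spatial weight at $(1+\frac{|x|^2}{1+t})^{-\frac n2}$. So for $|\alpha|\ge1$, which is what (\ref{3.8}) actually uses, the exponent $\frac{n+|\alpha|}{2}$ needs a higher-order version of that lemma.

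Your Step 3 supplies one by a vanishing-moment argument. It uses only the heat-type bounds on $g$ from Lemma \ref{A.1} and the fact that $\partial_\xi^\beta\hat g(0)=0$ for $|\beta|<|\alpha|$. It is therefore insensitive to the cutoff $\chi_1$ and to the $\mathcal O(|\xi|^3)t$ correction. Compare the classical explicit route, $\frac{\xi\xi^T}{|\xi|^2}e^{-a|\xi|^2t}=a\int_t^\infty\xi\xi^Te^{-a|\xi|^2s}\,ds$. It gives $(t+|x|^2)^{-\frac{n+|\alpha|}{2}}$ immediately for the pure heat symbol, but the cutoff and the cubic term then need a separate perturbation argument.

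Two things to tidy in Step 3.

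\textbf{Do not split off $\xi_j$.} The kernel of $\partial_j\nabla{\rm div}\Delta^{-1}$ is homogeneous of degree $-(n+1)$ and is not locally integrable. So on $\{|x-y|<|x|/2\}$, ``the decay of $g$'' alone does not control it; you would have to subtract a first-order Taylor polynomial of $g'$ at $x$. It is cleaner to keep the order-zero kernel $K_0$ of $\nabla{\rm div}\Delta^{-1}$ acting on $g$ itself. This $K_0$ is a multiple of $\delta$ plus a principal-value kernel of degree $-n$ with mean zero on spheres. The three regions then go as follows.
\begin{itemize}
\item On $\{|y|\le|x|/2\}$, expand $K_0(x-y)$ to order $|\alpha|$. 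The truncated moments of order $<|\alpha|$ equal minus their tails, so they are negligible when $|x|^2\ge1+t$. The remainder gives $|x|^{-n-|\alpha|}\int|y|^{|\alpha|}|g|\,dy\lesssim|x|^{-n-|\alpha|}$.
\item On $\{|x-y|<|x|/2\}$, the usual principal-value estimate with $\sup_{B(x,|x|/2)}|\nabla g|$ suffices.
\item On the remaining region, use $|K_0|\lesssim|x|^{-n}$ times the tail mass $\int_{|y|\ge|x|/2}|g|\,dy\lesssim(1+t)^{-\frac{|\alpha|}{2}}(1+\frac{|x|^2}{1+t})^{-N}$.
\end{itemize}
In the last region the extra decay comes from $g$, not from the kernel as you wrote.

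\textbf{Sign.} $\frac{\xi\xi^T}{|\xi|^2}$ is the symbol of $+\nabla{\rm div}\Delta^{-1}$, not $-\nabla{\rm div}\Delta^{-1}$. This is harmless for the bound.
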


 The next one describes the singular part of the high frequency:
\begin{lemma}\label{A.4}[Wang-Yang\cite{Wang}]
If ${\rm supp}\hat{f}(\xi)\subset
O_K=:{\{\xi, |\xi|\geq K>0\}}$, and $\hat{f}(\xi)$ satisfies
\begin{equation*}
|D_\xi^\beta\hat{f}(\xi)|\leq C|\xi|^{-|\beta|-1},
\end{equation*}
then there exist distributions $f_1(x), f_2(x)$ and a constant $C_0$
such that $$
f(x)=f_1(x)+\underbrace{f_{2}(x)+C_{0}\delta(x)}_{G_{S1}}
$$
where $\delta(x)$ is the Dirac function. Furthermore, for any $|\alpha|\geq0$ and any positive integer $N$, we have
\begin{equation*}
|D_x^\alpha f_1(x)|\leq C(1+|x|^2)^{-N},\
\|f_{2}\|_{L^1}\leq C,\ {\rm supp}f_{2}(x)\subset\{x;|x|<\eta_0\ll1\}.
\end{equation*}
\end{lemma}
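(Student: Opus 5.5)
The plan is to split $f$ in physical space with a smooth cutoff at the origin. Away from the origin the decay of $\xi$-derivatives of $\hat f$ gives smoothness and rapid decay. Near the origin a dyadic (Littlewood--Paley) decomposition gives a locally integrable singularity.

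Fix $\phi\in C_c^\infty(\mathbb{R}^n)$ with $\phi\equiv1$ on $\{|x|<\eta_0/2\}$ and ${\rm supp}\,\phi\subset\{|x|<\eta_0\}$. Set
\[
f_2:=\phi f+(\text{local correction}),\qquad f_1:=(1-\phi)f .
\]
Here $f=\mathcal F^{-1}\hat f$ is understood as a tempered distribution, since $\hat f$ is bounded.

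\textbf{Step 1 (the smooth part $f_1$).} For multi-indices $\alpha,\gamma$ we have, in the sense of distributions,
\[
x^\gamma D_x^\alpha f=c\,\mathcal F^{-1}\big(D_\xi^\gamma(\xi^\alpha\hat f)\big).
\]
By the Leibniz rule and the hypothesis,
\[
|D_\xi^\gamma(\xi^\alpha\hat f)|\lesssim|\xi|^{|\alpha|-|\gamma|-1}\quad\text{on } {\rm supp}\,\hat f\subset O_K .
\]
This is integrable over $O_K$ as soon as $|\gamma|>|\alpha|+n-1$, so $x^\gamma D^\alpha_x f\in L^\infty$ for such $\gamma$. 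Taking $|\gamma|=2N+|\alpha|+n$ and combining with the cutoff, whose derivatives are supported where $|x|\ge\eta_0/2$, we get
\[
|D_x^\alpha f_1(x)|\lesssim|x|^{-2N}\lesssim(1+|x|^2)^{-N}\quad\text{on }\{|x|\ge\eta_0/2\}.
\]
Since $f_1$ vanishes near $0$, this yields $|D_x^\alpha f_1|\le C(1+|x|^2)^{-N}$.

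\textbf{Step 2 (the local part).} Take a dyadic partition $\hat f=\sum_{j\ge j_0}\hat f\,\psi(2^{-j}\xi)$ with $2^{j_0}\sim K$. Rescaling each piece to $|\xi|\sim1$ and integrating by parts $M$ times, the hypothesis gives
\[
|\mathcal F^{-1}(\hat f\psi(2^{-j}\cdot))(x)|\lesssim 2^{j(n-1)}(1+2^j|x|)^{-M}.
\]
Summing over $j$ gives $|f(x)|\lesssim|x|^{-(n-1)}$ for $0<|x|\le1$. Since $|x|^{-(n-1)}$ is integrable near the origin, $\phi f\in L^1$ with support in $\{|x|<\eta_0\}$, and in this case $C_0=0$.

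\textbf{Step 3 (where the $\delta$ enters).} For the zeroth-order entries appearing in (\ref{2.36}), i.e.\ $|D^\beta_\xi\hat f|\lesssim|\xi|^{-|\beta|}$, Step 2 would only give $|x|^{-n}$. I would therefore first subtract $C_0(1-\chi(\xi))$, where $C_0$ is the radial limit (angular mean) of the leading homogeneous part. This produces $C_0\delta$ plus a Schwartz function that can be absorbed into $f_1$. The remainder has a leading term that is homogeneous of degree $0$ with mean zero, hence a Calder\'on--Zygmund kernel, plus an order $-1$ symbol to which Step 2 applies. Localized by $\phi$, the Calder\'on--Zygmund part should be treated by the cancellation argument: its mean-zero homogeneous kernel, cut off by $\phi$, is paired with its own cancellation on spheres.

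\textbf{Main obstacle.} The main obstacle is Step 2, together with its order-zero variant in Step 3: controlling the sum over infinitely many dyadic pieces uniformly near $x=0$ and checking integrability of the resulting singularity. I would first carry out the order-$(-1)$ case exactly as above, since that is the statement as given, and then treat the $\delta$ term only as the constant extracted in Step 3.
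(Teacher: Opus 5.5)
Your Steps 1--2 are a complete and correct proof of the lemma as stated. The paper itself gives no proof; it only quotes the lemma from Wang--Yang, and your route is the standard one:
\begin{itemize}
\item the identity for $x^\gamma D_x^\alpha f$ gives smoothness and rapid decay of $f$ on $|x|\ge\eta_0/2$;
\item the dyadic sum (equivalently, splitting the frequency integral at $|\xi|\sim|x|^{-1}$) gives $|f(x)|\lesssim|x|^{-2}$ near the origin, so $f_2=\phi f\in L^1$ and $C_0=0$.
\end{itemize}
One point in Step 2 should be stated explicitly. The dyadic partial sums converge to $f$ in $\mathcal{S}'$, and they are dominated by the tempered, locally integrable function $C|x|^{-(n-1)}(1+|x|)^{-M}$. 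Hence $f$ equals that pointwise sum as a distribution and carries no mass at $x=0$. In fact $C_0=0$ is forced: $f_1,f_2\in L^1$ give $\hat f_1+\hat f_2\to0$ at infinity by Riemann--Lebesgue, while $\hat f=\mathcal{O}(|\xi|^{-1})$.

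Step 3, however, is wrong and should be dropped rather than kept as a sketch. The same Riemann--Lebesgue argument shows that a decomposition $f=f_1+f_2+C_0\delta$ with $f_1,f_2\in L^1$ forces $\hat f\to C_0$ as $|\xi|\to\infty$. So no such decomposition exists in either of these cases:
\begin{itemize}
\item an order-zero symbol whose leading part $m(\xi/|\xi|)$ is non-constant, e.g.\ the entries of $\xi\xi^T/|\xi|^2$;
\item a symbol with no limit at all, such as $(1-\chi(\xi))\cos(\log|\xi|)$, which still satisfies $|D_\xi^\beta\hat f|\lesssim|\xi|^{-|\beta|}$.
\end{itemize}
Concretely, $\phi\cdot{\rm p.v.}\,\Omega(x/|x|)|x|^{-n}$ with mean-zero $\Omega\not\equiv0$ is not an $L^1$ function. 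The cancellation on spheres only makes convolution with it bounded on $L^p$, $1<p<\infty$; it does not produce an integrable kernel. The order-zero variant does hold when $\hat f-C_0(1-\chi)$ satisfies the order $-1$ bounds. In that case it is just your Steps 1--2 applied to $\hat f-C_0(1-\chi)$, plus $C_0\delta$ and a Schwartz term absorbed into $f_1$.
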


\begin{lemma}\label{A.5}[Liu-Noh\cite{Ls},Liu-Wang\cite{Lw}] There exists a constant $C>0$ such that
\begin{equation*}\label{6.1}
\begin{array}{ll}
\displaystyle K_1=\!\int_0^t\!\!\int_{\mathbb{R}^3}(1+t-\tau)^{-2}\Big(1+\frac{|x-y|^2}{1\!+\!t\!-\!s}\Big)^{-2}(1+\tau)^{-3}\Big(1+\frac{|y|^2}{1\!+\!\tau}\Big)^{-3}\!\!dyd\tau
\leq C(1+t)^{-2}\Big(1+\frac{|x|^2}{1+t}\Big)^{-\frac{3}{2}},\\[3.5mm]
\displaystyle K_2=\!\int_0^t\!\!\int_{\mathbb{R}^3}(1+t-\tau)^{-2}\Big(1+\frac{|x-y|^2}{1+t-\tau}\Big)^{-2}(1+\tau)^{-4}\Big(1+\frac{(|y|-{\rm c}\tau)^2}{1+\tau}\Big)^{-3}dyd\tau\\[2mm]
\ \ \ \ \leq \ \displaystyle C(1+t)^{-2}\Big(\Big(1+\frac{|x|^2}{1+t}\Big)^{-\frac{3}{2}}+\Big(1+\frac{(|x|-{\rm c}t)^2}{1+t}\Big)^{-\frac{3}{2}}\Big),\\[2mm]
K_3=\displaystyle\! \int_{0}^{t}\!\!\int_{\mathbb{R}^3}(1+t-\tau)^{-\frac{5}{2}}\Big(1+\frac{(|x-y|-{\rm c}(t-\tau))^2}{1\!+\!t\!-\!s}\Big)^{-N}(1+\tau)^{-4}\Big(1+\frac{(|y|-{\rm c}\tau)^2}{1\!+\!s}\Big)^{-3}dyd\tau\\[2mm]
\ \ \ \ \leq \ \displaystyle C(1+t)^{-2}\Big(\big(1+\frac{|x|^2}{1+t}\big)^{-\frac{3}{2}}+\Big(1+\frac{(|x|-{\rm c}t)^2}{1+t}\Big)^{-\frac{3}{2}}\Big),
\end{array}
\end{equation*}
where the constant $N>0$ can be arbitrarily large.
\end{lemma}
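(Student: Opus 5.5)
We prove the three estimates of Lemma \ref{A.5} by the classical space–time splitting used in \cite{Ls,Lw}: split the time interval into $[0,t/2]$ and $[t/2,t]$, and, for each half, split $\mathbb{R}^3$ according to which factor of the integrand carries the spatial localization. Throughout we may assume $t\ge1$; for $t\le 1$ all weights are comparable to constants and the bounds are trivial. We use two elementary facts repeatedly. First, for $r>3/2$,
\[
\int_{\mathbb{R}^3}\Big(1+\frac{|y|^2}{1+\tau}\Big)^{-r}dy\lesssim(1+\tau)^{3/2}.
\]
Second, the shell estimate recalled in the proof of Lemma \ref{l 4.1},
\[
\int_{\mathbb{R}^3}\Big(1+\frac{(|y|-a)^2}{b}\Big)^{-r}dy\lesssim b^{3/2}+b^{1/2}a^{2}.
\]

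For $K_1$ (heat kernel against heat kernel), consider first $|x|^2\le 1+t$. We bound each time-half by placing $L^\infty$ on the factor with the larger time argument and $L^1$ on the other. For $\tau\le t/2$ this gives $(1+t)^{-2}\int_0^{t/2}(1+\tau)^{-3/2}d\tau\lesssim(1+t)^{-2}$. For $\tau\ge t/2$ it gives $(1+t)^{-3}\int_{t/2}^t(1+t-\tau)^{-1/2}d\tau\lesssim(1+t)^{-5/2}$. Now let $|x|^2\ge 1+t$. Either $|y|\ge|x|/2$ or $|x-y|\ge|x|/2$, so one of the two weights yields the factor $(1+\frac{|x|^2}{1+\tau})^{-3}$ or $(1+\frac{|x|^2}{1+t-\tau})^{-2}$. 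Since $1+\tau,\,1+t-\tau\le 1+t$, each of these is bounded by the corresponding power of $(1+\frac{|x|^2}{1+t})^{-1}$. The remaining integrals are handled exactly as before, and we keep only the exponent $3/2$ in the final bound.

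For $K_2$ (heat kernel against the Huygens wave at speed ${\rm c}$), we distinguish three spatial regimes. If $|x|\le\sqrt{1+t}$ or $||x|-{\rm c}t|\le\sqrt{1+t}$, a direct $L^\infty$–$L^1$ bound suffices, where the $L^1$ norm of the H-wave is $(1+\tau)^{-4}(1+\tau)^{5/2}$ by the shell estimate; this gives $(1+t)^{-2}$. If $|x|\ge {\rm c}t+\sqrt{1+t}$, the elementary triangle-type inequality $|x-y|+||y|-{\rm c}\tau|\ge |x|-{\rm c}\tau\ge |x|-{\rm c}t$ shows that one of the two weights gains the factor $(1+\frac{(|x|-{\rm c}t)^2}{1+t})^{-3/2}$. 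In the interior region $\sqrt{1+t}\le|x|\le{\rm c}t-\sqrt{1+t}$, the point $y$ lies either near $x$ or near the sphere $|y|={\rm c}\tau$. When $y$ is near $x$, the H-wave factor is large only for $\tau\approx|x|/{\rm c}$, within a window of width $\sqrt{1+\tau}$; integrating over this window and using the heat kernel in $y$ yields $(1+t)^{-2}(1+\frac{|x|^2}{1+t})^{-3/2}$, or the H-wave bound.

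For $K_3$ (H-wave against H-wave at the same speed ${\rm c}$, with the extra factor $(1+t-\tau)^{-1/2}$), the same regions are used, but the inner $y$-integral is now a product of two shell weights. We evaluate it in spherical coordinates centred at $0$, with $|y|=\rho$ and angle $\theta$ relative to $x$. The second weight localizes $\rho$ to within $\sqrt{1+\tau}$ of ${\rm c}\tau$. The first weight localizes $|x-y|$ to within $\sqrt{1+t-\tau}$ of ${\rm c}(t-\tau)$, which through the law of cosines becomes a localization in $\cos\theta$ of width $\sqrt{1+t-\tau}\,{\rm c}(t-\tau)/(\rho|x|)$. This computation, the standard lens-intersection estimate of \cite{Lw}, is the main obstacle. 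The Jacobian $1/(\rho|x|)$ has to be combined with the extra half power $(1+t-\tau)^{-1/2}$ in order to recover $(1+t)^{-2}$ without a logarithmic loss. This matters especially when $|x|$ is small (where the two shells are nearly tangent, and we instead use $L^\infty$–$L^1$) and near $|x|\approx {\rm c}t$ (where we use the time window $|\tau-(t-\frac{|x|-{\rm c}t}{{\rm c}})|$ for the endpoint term). Outside the region $|x|\le {\rm c}t+\sqrt{1+t}$, the triangle inequality argument above again supplies the decay $(1+\frac{(|x|-{\rm c}t)^2}{1+t})^{-3/2}$.
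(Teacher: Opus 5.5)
Your region-decomposition strategy is the standard one behind the estimates the paper quotes from Liu--Noh and Liu--Wang (the paper gives no proof of its own). As written, however, one step fails.

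\textbf{The time-weight bookkeeping.} In $K_1$ with $|x|^2\ge 1+t$ you bound $\big(1+\frac{|x|^2}{1+s}\big)^{-a}$ by $\big(1+\frac{|x|^2}{1+t}\big)^{-a}$ via $1+s\le 1+t$, and then integrate ``as before''. This discards the factor $\big(\frac{1+s}{1+t}\big)^{a}$, and the result is too weak for $\sqrt{1+t}\ll|x|\ll t$.

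On $\{t/2\le\tau\le t,\ |x-y|\ge|x|/2\}$ the heat weight has been used pointwise, so the $y$-integral must fall on the other factor. You then get $(1+t)^{-3/2}\int_{t/2}^{t}(1+t-\tau)^{-2}\,d\tau\,\big(1+\frac{|x|^2}{1+t}\big)^{-2}\approx(1+t)^{-3/2}\big(1+\frac{|x|^2}{1+t}\big)^{-2}$. This exceeds the claimed bound by the factor $(1+t)^{1/2}\big(1+\frac{|x|^2}{1+t}\big)^{-1/2}$; for example, at $|x|=(1+t)^{3/4}$ you get $(1+t)^{-5/2}$ against a target of $(1+t)^{-11/4}$. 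On $\{\tau\le t/2,\ |y|\ge|x|/2\}$ you similarly get only $(1+t)^{-1/2}\big(1+\frac{|x|^2}{1+t}\big)^{-3}$.

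The missing ingredient is Lemma \ref{A.7}: the weight must be traded together with its own time prefactor,
$(1+s)^{-a}\big(1+\frac{A^2}{1+s}\big)^{-a}=(1+s+A^2)^{-a}\le 3^a(1+t)^{-a}\big(1+\frac{A^2}{1+t}\big)^{-a}$ for $A^2\ge 1+t$. This turns $(1+t-\tau)^{-2}$ into $(1+t)^{-2}$ in the first case and $(1+\tau)^{-3}$ into $(1+t)^{-3}$ in the second. Alternatively, compute the tail integral over $\{|y|\ge|x|/2\}$ instead of bounding pointwise. Your unspecified claim that ``one of the two weights gains the factor'' in the exterior region $|x|\ge{\rm c}t+\sqrt{1+t}$ of $K_2$ and $K_3$ needs exactly the same bookkeeping.

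\textbf{The core of $K_3$ is not carried out.} The heart of $K_3$, and of the interior of $K_2$, is described rather than done, and the description misses what produces the spatial decay.

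The $L^\infty$--$L^1$ split already gives $K_3\lesssim(1+t)^{-5/2}$ for every $x$. That settles $|x|\le(1+t)^{2/3}$ and $\big||x|-{\rm c}t\big|\le\sqrt{1+t}$. Near $|x|={\rm c}t$ the spheres $\{|y|={\rm c}\tau\}$ and $\{|x-y|={\rm c}(t-\tau)\}$ are nearly externally tangent for every $\tau$, so no special time window is needed there.

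The real content is the range $(1+t)^{2/3}\le|x|\le{\rm c}t-\sqrt{1+t}$; at $|x|={\rm c}t/2$, for instance, the target is $(1+t)^{-7/2}$. Your lens computation gives the per-slice bound $|x|^{-1}(1+t-\tau)^{-1}(1+\tau)^{-5/2}$. Integrated over all of $[0,t]$ this yields $|x|^{-1}(1+t)^{-1}$ from $\tau=O(1)$, which is $(1+t)^{-2}$ at $|x|={\rm c}t/2$, plus a $\log(1+t)$ loss from $\tau$ near $t$.

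The fix is to use the lens bound only for $\tau$ near $\big[\frac{{\rm c}t-|x|}{2{\rm c}},\frac{{\rm c}t+|x|}{2{\rm c}}\big]$, where the two spheres actually meet. There it gives $|x|^{-1}\big[(1+t)^{-1}(1+{\rm c}t-|x|)^{-3/2}+(1+t)^{-5/2}\log\frac{1+t}{1+{\rm c}t-|x|}\big]$, which is within the target. On the complementary time ranges the shells are separated by $\gtrsim{\rm c}t-|x|$; there you split $y$-space and extract the decay from the weights with Lemma \ref{A.7}, as in Cases 3.1 and 3.5 of the appendix.

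Likewise, in the interior of $K_2$ you only treat $y$ near $x$. The part of the shell $\{|y|\approx{\rm c}\tau\}$ away from $x$ must be controlled through the $|x-y|^{-4}$ tail of the heat factor.
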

\begin{remark}These three nonlinear estimates are mainly used to the conservative compressible fluid models, such as the compressible Navier-Stokes equations in \cite{Ls,Lw}, and the equivalent conservative compressible fluid models, such as the compressible bipolar Navier-Stokes-Poisson equations in \cite{Wu4,Wu5}. Here, the ``equivalent conservation property" implies that when dealing with nonlinear convolution estimates for nonlinear compressible fluid models, these non-conservative fluid models can be transformed into structures analogous to conservative fluid models through cancellation effects in either their linear or nonlinear components.
\end{remark}

Finally, we shall state the other two lemmas, which will be used to prove the nonlinear convolution estimates in Lemma \ref{l 4.2}.


\begin{lemma}\label{A.7}\cite{Wang2} For $0\leq \tau\leq t$, $A^2\geq 1+t$ and the constant $a>0$, then
\begin{equation*}
\bigg(1+\frac{A^2}{1+t}\bigg)^{-a}\leq 3^a\bigg(\frac{1+\tau}{1+t}\bigg)^a\bigg(1+\frac{A^2}{1+t}\bigg)^{-a}.
\end{equation*}
\end{lemma}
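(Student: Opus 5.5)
The inequality as printed has the same quantity $\big(1+\frac{A^2}{1+t}\big)^{-a}$ on both sides. With $\tau=0$ and $t$ large it would then read $1\le 3^a(1+t)^{-a}$, which is false. I therefore read the left-hand side as $\big(1+\frac{A^2}{1+\tau}\big)^{-a}$. This is the form used when converting a bound in the inner time variable $\tau$ into one in the outer time $t$, as in the convolution estimates of Lemma \ref{l 4.2}. The plan is to prove
\begin{equation*}
\Big(1+\frac{A^2}{1+\tau}\Big)^{-a}\leq 3^a\Big(\frac{1+\tau}{1+t}\Big)^a\Big(1+\frac{A^2}{1+t}\Big)^{-a},\qquad 0\le\tau\le t,\ A^2\ge 1+t,
\end{equation*}
by elementary comparison of the two bases, with no case splitting beyond the hypothesis.

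First, drop the $1$ in the left-hand base. Since $a>0$, the map $s\mapsto s^{-a}$ is decreasing, so
\begin{equation*}
\Big(1+\frac{A^2}{1+\tau}\Big)^{-a}\le \Big(\frac{A^2}{1+\tau}\Big)^{-a}=\Big(\frac{1+\tau}{1+t}\Big)^{a}\Big(\frac{A^2}{1+t}\Big)^{-a}.
\end{equation*}
This is the step that produces the factor $\big(\frac{1+\tau}{1+t}\big)^a$, which records the gain from passing from time scale $1+\tau$ to time scale $1+t$.

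Second, use $A^2\ge 1+t$, i.e.\ $\frac{A^2}{1+t}\ge 1$. This gives $1+\frac{A^2}{1+t}\le \frac{2A^2}{1+t}$, hence
\begin{equation*}
\Big(\frac{A^2}{1+t}\Big)^{-a}\le 2^a\Big(1+\frac{A^2}{1+t}\Big)^{-a}.
\end{equation*}
Combining the two displays yields the claim with constant $2^a\le 3^a$.

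There is no real obstacle. The only care needed is that the hypothesis $A^2\ge 1+t$ is exactly what makes $\frac{A^2}{1+t}$ comparable to $1+\frac{A^2}{1+t}$. The hypothesis $\tau\le t$ ensures the factor $\frac{1+\tau}{1+t}\le1$, so the estimate is a genuine gain rather than a loss.
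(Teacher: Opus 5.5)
Your proof is correct, and reading the left-hand side as $\big(1+\frac{A^2}{1+\tau}\big)^{-a}$ matches how the paper actually uses the lemma in the appendix. For instance, in Case 2.2 the factor $\big(1+\frac{(|y|-{\rm c}_2\tau)^2}{1+\tau}\big)^{-2}$ is replaced by $\big(\frac{1+\tau}{1+t}\big)^2\big(1+\frac{(|y|-{\rm c}_2\tau)^2}{1+t}\big)^{-2}$ on the region $\big||y|-{\rm c}_2\tau\big|\geq\sqrt{1+t}$.

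The paper gives no proof of its own, only the citation \cite{Wang2}. Your comparison is the standard elementary argument; equivalently, the claim reduces to $1+t+A^2\le 2A^2\le 2(1+\tau+A^2)$, which also gives your sharper constant $2^a$.
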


\begin{lemma}\label{A.8}
If $|x|\leq Ct$ for some constant $C>0$, $N\geq3$ and ${\rm c}_1,{\rm c}_2>0$, then it holds that
\begin{align*}\label{9.2}
\int_0^t\int_{\mathbb{R}^3}\bigg(1+\frac{(|x-y|-{\rm c}_1(t-\tau))^2}{1+t}\bigg)^{-N}\bigg(1+\frac{(|y|-{\rm c}_2\tau)^2}{1+t}\bigg)^{-2}dyd\tau
\lesssim (1+t)^3.
\end{align*}
\end{lemma}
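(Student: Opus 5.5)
The plan is to integrate in $\tau$ first for fixed $y$, which collapses the two moving fronts into a single ``front function'' of $y$. Then I would integrate in $y$ with the coarea formula.

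\emph{Step 1 ($\tau$-integration).} Write $A=|x-y|$ and $B=|y|$. As a function of $\tau\in[0,t]$, the first factor is $\lesssim(1+(\tau-\tau_1)^2/(1+t))^{-N}$ with $\tau_1=t-A/{\rm c}_1$. The second factor is $\lesssim(1+(\tau-\tau_2)^2/(1+t))^{-2}$ with $\tau_2=B/{\rm c}_2$. The convolution-type inequality
\[
\int_{\mathbb{R}}\Big(1+\tfrac{(\tau-\tau_1)^2}{1+t}\Big)^{-N}\Big(1+\tfrac{(\tau-\tau_2)^2}{1+t}\Big)^{-2}d\tau\lesssim\sqrt{1+t}\,\Big(1+\tfrac{(\tau_1-\tau_2)^2}{1+t}\Big)^{-2},
\]
valid for $N\geq 3$, is proved by splitting according to whether $|\tau-\tau_1|$ or $|\tau-\tau_2|$ exceeds $|\tau_1-\tau_2|/2$. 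It reduces the left side of Lemma \ref{A.8} to
\[
\sqrt{1+t}\int_{\mathbb{R}^3}\Big(1+\tfrac{(g(y)-{\rm c}_1{\rm c}_2t)^2}{C(1+t)}\Big)^{-2}dy,\qquad g(y):={\rm c}_2|x-y|+{\rm c}_1|y|.
\]

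\emph{Step 2 (splitting in $y$).} Since $|x|\leq Ct$, for $|y|\geq Kt$ with $K$ large we have $g(y)-{\rm c}_1{\rm c}_2t\gtrsim|y|$. The tail is therefore $\lesssim\int_{|y|\geq Kt}(1+|y|^2/(1+t))^{-2}dy\lesssim(1+t)^{2}\int_{Kt}^\infty r^{-2}dr\lesssim 1+t$, which contributes $(1+t)^{3/2}$ after the prefactor. On the bounded region $|y|\leq Kt$, I would apply the coarea formula to $g$. Its gradient is $\nabla g={\rm c}_2e_1+{\rm c}_1e_2$ with unit vectors $e_1=(y-x)/|x-y|$ and $e_2=y/|y|$, so $|\nabla g|\geq|{\rm c}_2-{\rm c}_1|>0$ when ${\rm c}_1\neq{\rm c}_2$, as in our application. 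The level sets $\{g=s\}\cap\{|y|\leq Kt\}$ are Cartesian ovals, i.e.\ smooth surfaces of revolution about the axis through $0$ and $x$, with area $\lesssim t^2$. Hence this part of the $y$-integral is $\lesssim t^2|{\rm c}_2-{\rm c}_1|^{-1}\int_{\mathbb{R}}(1+(s-{\rm c}_1{\rm c}_2t)^2/(C(1+t)))^{-2}ds\lesssim t^2\sqrt{1+t}$. Multiplying by the $\sqrt{1+t}$ from Step 1 gives $(1+t)^3$.

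\emph{Main obstacle.} The delicate point is Step 2, specifically the uniform area bound for the level sets and the nondegeneracy of $\nabla g$. Near the axis, the ovals could a priori degenerate. I would handle this by writing the surfaces of revolution in cylindrical coordinates and using $|\nabla g|\geq|{\rm c}_2-{\rm c}_1|$ to bound the surface measure by that of a graph over a disc of radius $\lesssim t$. If equal speeds were needed, $\nabla g$ vanishes only on the segment $[0,x]$, where $g\equiv{\rm c}|x|$. There I would instead bound the $y$-integral directly in spherical coordinates centered at $x$, as in the proof of Lemma \ref{l 4.1}, using the shell estimate $\int(1+(|y|-a)^2/b)^{-2}dy\lesssim b^{3/2}+b^{1/2}a^2$ with $a\lesssim t$ and $b=1+t$.
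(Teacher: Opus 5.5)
Your proof is correct for ${\rm c}_1\neq{\rm c}_2$, and it takes a different route from the paper. In fact the paper does not prove this lemma: it refers to Bai--Zhang for the version with exponent $3$ on the second factor and omits the details for exponent $2$.

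\textbf{What your route buys.} Integrating in $\tau$ first is the decisive move. Since ${\rm c}_1{\rm c}_2(\tau_1-\tau_2)={\rm c}_1{\rm c}_2t-g(y)$, both fronts collapse into the single front $g(y)={\rm c}_2|x-y|+{\rm c}_1|y|$. The coarea step then makes the count $(1+t)^3$ transparent: $\sqrt{1+t}$ from the time width, $\sqrt{1+t}$ from the width in $s$, and $t^2$ from the area of the level surfaces. Your argument works for any exponent larger than $\frac32$ on the second factor (this is needed only for the tail). So it directly supplies the exponent-$2$ claim that the paper leaves unproved.

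\textbf{The area bound.} The ``main obstacle'' you flag goes away once you note that $g$ is convex. For $s>\min g$, the level set $\{g=s\}$ is the boundary of the convex set $\{g\le s\}$. Monotonicity of surface area for convex bodies then gives $\mathcal{H}^2(\{g=s\}\cap B_{Kt})\le\mathcal{H}^2\big(\partial(\{g\le s\}\cap\overline{B_{Kt}})\big)\le 4\pi K^2t^2$. No cylindrical-coordinate analysis of the ovals is needed.

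\textbf{The equal-speed case.} The statement as written admits ${\rm c}_1={\rm c}_2$. (The paper's application has ${\rm c}_1<{\rm c}_2$ strictly, because $\kappa_1^2=\frac{(\beta_1-\beta_4)^2}{4}+\beta_2\beta_3>0$.) Your fallback does not work as described. The reduced integrand $\big(1+\frac{(|x-y|+|y|-{\rm c}t)^2}{C(1+t)}\big)^{-2}$ is not dominated by a radial shell profile about $x$. When $|x|\approx{\rm c}t$ it is of size one on a thin spheroid around the whole segment $[0,x]$. So the estimate $\int(1+(|y|-a)^2/b)^{-2}dy\lesssim b^{3/2}+b^{1/2}a^2$ cannot be invoked.

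The fix is to replace the gradient bound by an exact volume computation. The set $\{|x-y|+|y|\le s\}$ is a prolate spheroid with foci $0$ and $x$, of volume $V(s)=\frac{\pi}{6}s(s^2-|x|^2)$ for $s\geq|x|$. Hence $V'(s)\le\frac{\pi}{2}s^2$, and with $\phi(s)=\big(1+\frac{(s-{\rm c}t)^2}{C(1+t)}\big)^{-2}$,
\[
\int_{\mathbb{R}^3}\phi(|x-y|+|y|)\,dy=\int_{|x|}^{\infty}\phi(s)V'(s)\,ds\lesssim t^2\sqrt{1+t}+(1+t)^{\frac32}.
\]
After the factor $\sqrt{1+t}$ from your Step 1, this again gives $(1+t)^3$.
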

\begin{proof} In fact, when $\big(1+\frac{(|y|-{\rm c}_2\tau)^2}{1+t}\big)^{-2}$ was replaced by $\big(1+\frac{(|y|-{\rm c}_2\tau)^2}{1+t}\big)^{-3}$, it has been proved in Bai-Zhang \cite{Bai}. As for our case, one can also get the same result. The details are omitted for simplicity.
\end{proof}

\subsection{Proof of Lemma \ref{l 4.2}}

\quad\quad We will prove the essential estimates of nonlinear coupling: Lemma \ref{l 4.2}.

At first, to facilitate the estimate $\mathcal{K}_3$ in Lemma \ref{l 4.2}, we decompose space-time domain into the following eight regions:
\begin{align*}
D_{1}& =\bigg \{ |x|\leq \sqrt{1+t}\bigg \}\cup\left \{ \sqrt{1+t}\leq|x|\leq \frac{{\rm c}_1t}{2}\right \}:=D_{11}\cup D_{12}, \\
\ D_{2}& =\left \{ \frac{{\rm c}_1t}{2}\leq|x|\leq \frac{({\rm c}_1+{\rm c}_2)t}{2}\right \}\\
&=\underbrace{\bigg \{ \big||x|-{\rm c}_1t\big|\leq \sqrt{1+t}\bigg \}}_{D_{21}}\cup\underbrace{\left \{ \frac{{\rm c}_1t}{2}\leq|x|\leq {\rm c}_1t-\sqrt{1+t}\right \}}_{D_{22}}
\cup\underbrace{\left \{ {\rm c}_1t+\sqrt{1+t}\leq|x|\leq \frac{({\rm c}_1+{\rm c}_2)t}{2}\right \}}_{D_{23}}, \\
D_{3}& =\underbrace{\bigg \{ \big||x|-{\rm c}_2t\big|\leq \sqrt{1+t}\bigg \}}_{D_{31}}\cup\underbrace{\left\{ \frac{({\rm c}_1+{\rm c}_2)t}{2}\leq|x|\leq {\rm c}_2t-\sqrt{1+t}\right\}}_{D_{32}}
\cup\underbrace{\bigg\{|x|\geq  {\rm c}_2t+\sqrt{1+t} \bigg\}}_{D_{33}}.
\end{align*}
Here $0<{\rm c}_1\leq {\rm c}_2$ and $t$ is suitably large (since the nonlinear convolution estimates can be easily established when $t$ is upper bounded).

\vspace{3mm}
\textit{\textbf{Proof of $\mathcal{K}_3$ in Proposition \ref{l 4.2}}}. (Interaction of Huygens waves for {\bf{non-conservative system}})


\vspace{3mm}

\noindent\textbf{Case 1:} $\left( x,t\right) \in D_{11}\cup D_{21}\cup D_{31}$. Direct computation
gives for ${\rm c}={\rm c}_1$ or ${\rm c}={\rm c}_2$ that
\begin{eqnarray*}
\arraycolsep=1.5pt
&&\int_{0}^{\frac{t}{2}}\int_{\mathbb{R}^{3}}\left( 1+t\right)
^{-2}\left( 1+\tau \right) ^{-4}\left( 1+\frac{\left( \left \vert y\right
\vert -{\rm c}\tau \right) ^{2}}{1+\tau }\right) ^{-2}dyd\tau \\
&&+\int_{\frac{t}{2}}^{t}\int_{\mathbb{R}^{3}}\left( 1+t-\tau \right)
^{-2}\Big(1+\frac{(|x-y|-{\rm c}(t-\tau))^2}{1+t-\tau}\Big)^{-N}\left( 1+t\right)
^{-4}dyd\tau \\
&\lesssim &\left( 1+t\right) ^{-2}\int_{0}^{\frac{t}{2}}\left( 1+\tau
\right) ^{-4}\left( 1+\tau \right) ^{\frac{5}{2}}d\tau +\left( 1+t\right)
^{-4}\int_{\frac{t}{2}}^{t}\left( 1+t-\tau \right) ^{-2}\left( 1+t-\tau
\right) ^{\frac{5}{2}}d\tau \\
&\lesssim &(1+t)^{-2}\hbox{,}
\end{eqnarray*}
which further implies for any constant $N>0$ that
\begin{eqnarray*}
\mathcal{K}_3 \lesssim ( 1+t)^{-2}\bigg(1+\frac{|x|^2}{1+t}\bigg)^{-N}\hbox{,}\ \ \ \ \ \ {\rm in}\ D_{11},
\end{eqnarray*}
\begin{eqnarray*}
\mathcal{K}_3 \lesssim ( 1+t)^{-2}\bigg(1+\frac{(|x|-{\rm c}_1t)^2}{1+t}\bigg)^{-N}\hbox{,}\ \ \ \ \ {\rm in}\ D_{21},
\end{eqnarray*}
\begin{eqnarray*}
\mathcal{K}_3 \lesssim ( 1+t)^{-2}\bigg(1+\frac{(|x|-{\rm c}_2t)^2}{1+t}\bigg)^{-N}\hbox{,}\ \ \ \ \ {\rm in}\ D_{31}.
\end{eqnarray*}

\noindent\textbf{Case 2:} $\left( x,t\right) \in D_{12}=\left \{ \sqrt{1+t}\leq|x|\leq \frac{{\rm c}_1t}{2}\right \}$. We are going to study this case by dividing the time interval into four parts.

\vspace{1.5mm}
\textbf{Case 2.1:} $0\leq \tau\leq\frac{{\rm c}_1t-|x|}{2({\rm c}_1+{\rm c}_2)}$. When $|y|\geq\frac{{\rm c}_1t-|x|+({\rm c}_2-{\rm c}_1)\tau}{2}$, which implies
$$
|y|-{\rm c}_2\tau\geq\frac{{\rm c}_1t-|x|}{2}-\frac{({\rm c}_2+{\rm c}_1)\tau}{2}\geq \frac{{\rm c}_1t-|x|}{2}-\frac{{\rm c}_1t-|x|}{4}\geq \frac{{\rm c}_1t-|x|}{4},
$$
and when $|y|\leq\frac{{\rm c}_1t-|x|+({\rm c}_2-{\rm c}_1)\tau}{2}$, which implies that
$$
{\rm c}_1(t-\tau)-|x-y|\geq {\rm c}_1t-|x|-{\rm c}_1\tau-\frac{{\rm c}_1t-|x|+({\rm c}_2-{\rm c}_1)\tau}{2}\geq \frac{{\rm c}_1t-|x|}{4}.
$$
Then, by using Lemma \ref{A.7}, the facts $({\rm c}_1t-|x|)^2\geq(\frac{{\rm c}_1t}{2})^2\geq 1+t$ in Case 2.1 and 
\begin{equation}\label{9.0}
\int_{\mathbb{R}^3}\Big(1+\frac{(|x|-{\rm c}t)^2}{1+t}\Big)^{-r}dx\lesssim (1+t)^{\frac{5}{2}}\ {\rm for\ constants}\ r>\frac{3}{2}\ {\rm and}\ {\rm c}\neq0,
\end{equation}
one has
 \begin{eqnarray*}
 \arraycolsep=1.5pt
&&J_1\triangleq\int_{0}^{\frac{{\rm c}_1t-|x|}{2({\rm c}_1+{\rm c}_2)}}\left( \int_{|y|\geq\frac{{\rm c}_1t-|x|+({\rm c}_2-{\rm c}_1)\tau}{2}}+\int_{|y|\leq\frac{{\rm c}_1t-|x|+({\rm c}_2-{\rm c}_1)\tau}{2}}\right) \left( \cdots \right) dyd\tau\\
&\lesssim\!\!&\!\! (1+t)^{-1}\Big(1+\frac{(|x|-{\rm c}_1t)^2}{1+t}\Big)^{-1}\!\int_{0}^{t}\!\!\int_{|y|\geq\frac{{\rm c}_1t-|x|+({\rm c}_2-{\rm c}_1)\tau}{2}}(1+t-\tau)^{-1}\Big(1+\frac{(|x-y|-{\rm c}_1(t-\tau))^2}{1+t-\tau}\Big)^{-N}\\
&&\ \ \ \ \ \ \  \ \ \ \ \ \ \ \ \ \ \ \  \ \ \ \ \ \ \ \ \ \ \ \  \ \ \ \ \ \ \ \ \ \ \ \  \ \ \ \ \ \ \ \ \ \ \ \cdot(1+\tau)^{-4}\bigg(\frac{1+\tau}{1+t}\bigg)\Big(1+\frac{(|y|-{\rm c}_2\tau)^2}{1+\tau}\Big)^{-1}dyd\tau\\
&&\!\!+(1+t)^{-2}\Big(1+\frac{(|x|-{\rm c}_1t)^2}{1+t}\Big)^{-N}\int_{0}^{t}\int_{|y|\leq\frac{{\rm c}_1t-|x|+({\rm c}_2-{\rm c}_1)\tau}{2}}(1+\tau)^{-4}\Big(1+\frac{(|y|-{\rm c}_2\tau)^2}{1+\tau}\Big)^{-2}\bigg(\frac{1+t-\tau}{1+t}\bigg)^Ndyd\tau\\
&\lesssim&\!\! (1+t)^{-2}\Big(1+\frac{(|x|-{\rm c}_1t)^2}{1+t}\Big)^{-1}\int_{0}^{t}\underbrace{(1+t-\tau)^{-1}(1+t-\tau)^{\frac{5}{2}\cdot\frac{2}{5}}(1+\tau)^{-3}(1+\tau)^{\frac{5}{2}\cdot \frac{3}{5}}}_{where\ we\ have\ used\ Young\ inequality\ with\ p=\frac{5}{2}\ and\ q=\frac{5}{3}}d\tau\\
&&+(1+t)^{-2}\Big(1+\frac{(|x|-{\rm c}_1t)^2}{1+t}\Big)^{-N}\int_{0}^{t}(1+\tau)^{-\frac{3}{2}}d\tau\\
&\lesssim&\!\! (1+t)^{-2}\Big(1+\frac{(|x|-{\rm c}_1t)^2}{1+t}\Big)^{-1}.
\end{eqnarray*}

\vspace{1.5mm}
\textbf{Case 2.2:} $\frac{{\rm c}_1t-|x|}{2({\rm c}_1+{\rm c}_2)}\leq \tau\leq\frac{t}{2}$. We have
\begin{eqnarray*}
\arraycolsep=1.5pt
J_2 & \triangleq &\int_{\frac{{\rm c}_1t-|x|}{2({\rm c}_1+{\rm c}_2)}}^{\frac{t}{2}}\left( \int_{\big|\left \vert y\right \vert -{\rm c}_2%
\tau\big| \leq \sqrt{1+t}}+\int_{\big|\left \vert y\right \vert -{\rm c}_2%
\tau\big| \geq \sqrt{1+t}}\right) \left( \cdots \right) dyd\tau\\
&\lesssim& (1+t)^{-2}\int_{\frac{{\rm c}_1t-|x|}{2({\rm c}_1+{\rm c}_2)}}^{\frac{t}{2}}\int_{\big|\left \vert y\right \vert -{\rm c}_2%
\tau\big| \leq \sqrt{1+t}}\bigg(1+\frac{(|x-y|-{\rm c}_1(t-\tau))^2}{1+t-\tau}\bigg)^{-N}\bigg(1+\frac{(|y|-{\rm c}_2\tau)^2}{1+\tau}\bigg)^{-2}(1+\tau)^{-4}dyd\tau  \\
&&+(1+t)^{-2}\int_{\frac{{\rm c}_1t-|x|}{2({\rm c}_1+{\rm c}_2)}}^{\frac{t}{2}}\int_{\big|\left \vert y\right \vert -{\rm c}_2%
\tau\big| \geq \sqrt{1+t}}  \bigg(1+\frac{(|x-y|-{\rm c}_1(t-\tau))^2}{1+t-\tau}\bigg)^{-N}(1+\tau)^{-4}\\
&&\!\!\ \ \ \ \ \ \  \ \ \ \ \ \ \ \ \ \ \ \cdot\bigg(1+\frac{(|y|-{\rm c}_2\tau)^2}{1+t}\bigg)^{-2}\Big(\frac{1+\tau}{1+t}\Big)^{2} dyd\tau   \\
&\lesssim& (1+t)^{-2}\bigg(1+\frac{{\rm c}_1t-|x|}{2({\rm c}_1+{\rm c}_2)}\bigg)^{-4}(1+t)^3+(1+t)^{-2}(1+t)^{-2}\bigg(1+\frac{{\rm c}_1t-|x|}{2({\rm c}_1+{\rm c}_2)}\bigg)^{-2}(1+t)^3\\
&\lesssim& (1+t)^{-2}\bigg(1+\frac{({\rm c}_1t-|x|)^2}{1+t}\bigg)^{-1}(1+t)^{-1}\bigg(1+\frac{({\rm c}_1t)^2}{4({\rm c}_1+{\rm c}_2)}\bigg)^{-1}(1+t)^3\\
&&+(1+t)^{-2}(1+t)^{-2}\bigg(1+\frac{({\rm c}_1t-|x|)^2}{1+t}\bigg)^{-1}(1+t)^{-1}(1+t)^3\\
&\lesssim&(1+t)^{-2}\bigg(1+\frac{({\rm c}_1t-|x|)^2}{1+t}\bigg)^{-1},
\end{eqnarray*}
where Lemma \ref{A.7}, Lemma \ref{A.8} and the fact ${\rm c}_1t-|x|\geq\frac{{\rm c}_1t}{2}$ are used.

\vspace{1.5mm}
\textbf{Case 2.3:} $\frac{t}{2}\leq \tau\leq t-\frac{|x|}{4{\rm c}_2}$. Then $\frac{|x|}{4{\rm c}_2}\leq t-\tau\leq\frac{t}{2}$. By using Lemma \ref{A.8}, one has
\begin{eqnarray*}
J_3 &\triangleq&\int_{\frac{t}{2}}^{t-\frac{|x|}{4{\rm c}_2}}\!\int_{\mathbb{R}^3}(1+t-\tau)^{-2}\Big(1+\frac{(|x-y|-{\rm c}_1(t-\tau))^2}{1+t-\tau}\Big)^{-N}(1+\tau)^{-4}\Big(1+\frac{(|y|-{\rm c}_2\tau)^2}{1+\tau}\Big)^{-2}dyd\tau\\
&\lesssim& \bigg(1+\frac{|x|}{4{\rm c}_2}\bigg)^{-4}(1+t)^{-4}(1+t)^3\lesssim (1+t)^{-\frac{3}{2}}\bigg(1+\frac{|x|^2}{1+t}\bigg)^{-\frac{3}{2}}\\
&&\ \ \ \ \ \ \ \ \  \ \ \ \ \ \ \ \ \  \ \ \ \ \ \ \ \ \ \ \ \ \ \ \ {\rm or},\ \ \ \ \lesssim(1+t)^{-2}\bigg(1+\frac{|x|^2}{1+t}\bigg)^{-1}.
\end{eqnarray*}

\vspace{1.5mm}
\textbf{Case 2.4:} $t-\frac{|x|}{4{\rm c}_2}\leq \tau\leq t$. At first, one has that when $|x-y|\leq\frac{|x|}{2}$,
\begin{eqnarray*}
{\rm c}_2\tau-|y|\geq {\rm c}_2t-\frac{|x|}{4}-|x|-|x-y|\geq {\rm c}_1t-\frac{|x|}{4}-|x|-|x-y|\geq\frac{|x|}{4},
\end{eqnarray*}
and when $|x-y|\geq\frac{|x|}{2}$,
\begin{eqnarray*}
|x-y|-{\rm c}_1(t-\tau)\geq\frac{|x|}{2}-{\rm c}_1t+{\rm c}_1t-\frac{{\rm c}_1|x|}{4{\rm c}_2}\geq\frac{|x|}{4}.
\end{eqnarray*}
Then, by using Lemma \ref{A.7} again, one has
\begin{eqnarray*}
J_4 &\triangleq&\int_{t-\frac{|x|}{4{\rm c}_2}}^t\!\big(\int_{|x-y|\leq\frac{|x|}{2}}+\int_{|x-y|\geq\frac{|x|}{2}}\big)(\cdots)dyd\tau\\
&\lesssim& (1+t)^{-4}\int_{t-\frac{|x|}{4{\rm c}_2}}^t\int_{|x-y|\leq\frac{|x|}{2}}(1+t-\tau)^{-2}\Big(1+\frac{(|x-y|-{\rm c}_1(t-\tau))^2}{1+t-\tau}\Big)^{-N}\bigg(1+\frac{|x|^2}{1+t}\bigg)^{-2}\bigg(\frac{1+\tau}{1+t}\bigg)^2dyd\tau\\
&&+(1+t)^{-4}\int_{t-\frac{|x|}{4{\rm c}_2}}^t\int_{|x-y|\geq\frac{|x|}{2}}(1+t-\tau)^{-2}\bigg(1+\frac{|x|^2}{1+t}\bigg)^{-N}\bigg(\frac{1+t-\tau}{1+t}\bigg)^N\Big(1+\frac{(|y|-{\rm c}_2\tau)^2}{1+\tau}\Big)^{-2}dyd\tau\\
&\lesssim& (1+t)^{-4}\bigg(1+\frac{|x|^2}{1+t}\bigg)^{-2}\int_{t-\frac{|x|}{4{\rm c}_2}}^t(1+t-\tau)^{-2}(1+t-\tau)^{\frac{5}{2}}\bigg(\frac{1+\tau}{1+t}\bigg)^2d\tau\\
&&+(1+t)^{-4}\bigg(1+\frac{|x|^2}{1+t}\bigg)^{-N}\int_{t-\frac{|x|}{4{\rm c}_2}}^t(1+t-\tau)^{-2}\bigg(\frac{1+t-\tau}{1+t}\bigg)^N(1+\tau)^{\frac{5}{2}}d\tau\\
&\lesssim& (1+t)^{-\frac{5}{2}}\bigg(1+\frac{|x|^2}{1+t}\bigg)^{-2}.
\end{eqnarray*}

\vspace{1.8mm}
\noindent\textbf{Case 3:} $\left( x,t\right) \in D_{22}:=\left \{ \frac{{\rm c}_1t}{2}\leq|x|\leq {\rm c}_1t-\sqrt{1+t}\right \}$. We study this case by dividing the time interval into five parts.

\vspace{1.8mm}
\textbf{Case 3.1:} $0\leq\tau\leq \frac{{\rm c}_1t-|x|}{2({\rm c}_1+{\rm c}_2)}$. Similar to the Case 2.1, one has
\begin{eqnarray*}
J_5&\triangleq&\int_0^{\frac{{\rm c}_1t-|x|}{2({\rm c}_1+{\rm c}_2)}}\int_{\mathbb{R}^3}(1+t-\tau)^{-2}\Big(1+\frac{(|x-y|-{\rm c}_1(t-\tau))^2}{1+t-\tau}\Big)^{-N}(1+\tau)^{-4}\Big(1+\frac{(|y|-{\rm c}_2\tau)^2}{1+\tau}\Big)^{-2}dyd\tau\\
&\lesssim& (1+t)^{-2}\Big(1+\frac{(|x|-{\rm c}_1t)^2}{1+t}\Big)^{-1}.
\end{eqnarray*}

\vspace{1.8mm}
\textbf{Case 3.2:} $\frac{{\rm c}_1t-|x|}{2({\rm c}_1+{\rm c}_2)}\leq\tau\leq \frac{t}{2}$. We use the spherical coordinates to obtain
\begin{eqnarray*}
J_6&\triangleq&\int_{\frac{{\rm c}_1t-|x|}{2({\rm c}_1+{\rm c}_2)}}^{\frac{t}{2}}\int_{\mathbb{R}^3}(1+t-\tau)^{-2}\Big(1+\frac{(|x-y|-{\rm c}_1(t-\tau))^2}{1+t-\tau}\Big)^{-N}(1+\tau)^{-4}\Big(1+\frac{(|y|-{\rm c}_2\tau)^2}{1+\tau}\Big)^{-2}dyd\tau\\
 &\lesssim &\int_{\frac{{\rm c}_1t-|x|}{2({\rm c}_1+{\rm c}_2)}}^{\frac{t}{2}}\int_{0}^{\infty }\int_{0}^{\pi }\left( 1+t-\tau
\right) ^{-2}\bigg(1+\frac{( \sqrt{\left \vert x\right \vert
^{2}+r^{2}-2r\left \vert x\right \vert \cos \theta }-{\rm c}_1\left( t-\tau
\right) ) ^{2}}{ 1+t-\tau}\bigg)^{-N}\\
&&\cdot\left( 1+\tau \right)
^{-4}\bigg( 1+\frac{\left( r-{\rm c}_2\tau \right) ^{2}}{1+\tau }\bigg)
^{-2}r^{2}\sin \theta d\theta drd\tau \\
&\lesssim &\int_{\frac{{\rm c}_1t-|x|}{2({\rm c}_1+{\rm c}_2)}
}^{\frac{t}{2}}\int_{0}^{\infty }\int_{\left \vert \left \vert x\right \vert
-r\right \vert }^{\left \vert x\right \vert +r}\left( 1+t-\tau \right)
^{-2}\bigg(1+\frac{( z-{\rm c}_1\left( t-\tau
\right) ) ^{2}}{ 1+t-\tau}\bigg)^{-N}\\
&&\cdot\left( 1+\tau \right) ^{-4}\bigg( 1+\frac{%
\left( r-{\rm c}_2\tau \right) ^{2}}{1+\tau }\bigg) ^{-2}rz\frac{1}{\left
\vert x\right \vert }dzdrd\tau \\
&\lesssim &\int_{\frac{{\rm c}_1t-|x|}{2({\rm c}_1+{\rm c}_2)}
}^{\frac{t}{2}}\int_{0}^{\infty }\int_{0}^{\infty }\left( 1+t-\tau \right)
^{-2}\bigg(1+\frac{( z-{\rm c}_1\left( t-\tau
\right) ) ^{2}}{ 1+t-\tau}\bigg)^{-N}\\
&&\cdot\left( 1+\tau \right) ^{-4}\bigg( 1+\frac{%
\left( r-{\rm c}_2\tau \right) ^{2}}{1+\tau }\bigg) ^{-2}rz\frac{1}{\left
\vert x\right \vert }dzdrd\tau \\
&\lesssim &\int_{\frac{{\rm c}_1t-|x|}{2({\rm c}_1+{\rm c}_2)}
}^{\frac{t}{2}}\int_{0}^{\infty }\left( 1+t-\tau \right) ^{-2+%
\frac{3}{2}}\left( 1+\tau \right) ^{-4}\bigg( 1+\frac{\left( r-{\rm c}_2%
\tau \right) ^{2}}{1+\tau }\bigg) ^{-2}\frac{r}{\left \vert x\right \vert }%
drd\tau \\
&\lesssim &\left( 1+t\right) ^{-2+\frac{3}{2}}\left \vert x\right
\vert ^{-1}\int_{\frac{{\rm c}_1t-|x|}{2({\rm c}_1+{\rm c}_2)}
}^{\frac{t}{2}}\left( 1+\tau \right) ^{-4+\frac{3}{2}}d\tau \\
&\lesssim &\left( 1+t\right) ^{-\frac{3}{2}}\left( 1+{\rm c}_1t-\left \vert x\right
\vert \right) ^{-\frac{3}{2}}\lesssim \left( 1+t\right) ^{-\frac{3}{2}}\left( 1+%
{\rm c}_1t-\left \vert x\right \vert \right) ^{\frac{1}{2}}\left( 1+{\rm
c}_1t-\left \vert x\right \vert \right) ^{-2} \\
&\lesssim &\left( 1+t\right)^{-2}\bigg( 1+\frac{\left( {\rm c}_1t-\left
\vert x\right \vert \right) ^{2}}{1+t}\bigg) ^{-1}\hbox{,}
\end{eqnarray*}%
where $z=\sqrt{\left \vert x\right \vert
^{2}+r^{2}-2r\left \vert x\right \vert \cos \theta }$ and we have used the fact ${\rm c}_1t-|x|\geq\sqrt{1+t}$ in $D_{12}$.

\vspace{1.8mm}
\textbf{Case 3.3:} $\frac{t}{2}\leq\tau\leq \frac{3|x|}{4{\rm c}_1}+\frac{t}{4}$. 
Then we use the spherical coordinates again to obtain%
\begin{eqnarray*}
J_7&\triangleq&\int_{\frac{t}{2}}^{\frac{3|x|}{4{\rm c}_1}+\frac{t}{4}}\int_{\mathbb{R}^3}(1+t-\tau)^{-2}\Big(1+\frac{(|x-y|-{\rm c}_1(t-\tau))^2}{1+t-\tau}\Big)^{-N}(1+\tau)^{-4}\Big(1+\frac{(|y|-{\rm c}_2\tau)^2}{1+\tau}\Big)^{-2}dyd\tau\\
&\lesssim &\int_{\frac{t}{2}}^{\frac{3|x|}{4{\rm c}_1}+\frac{t}{4}%
}\int_{0}^{\infty }\int_{0}^{\pi }\left( 1+t-\tau \right) ^{-2}\bigg(1+\frac{( \sqrt{\left \vert x\right \vert
^{2}+r^{2}-2r\left \vert x\right \vert \cos \theta }-{\rm c}_1\left( t-\tau
\right) ) ^{2}}{ 1+t-\tau}\bigg)^{-N}\\
&&\cdot\left( 1+t\right) ^{-4}\bigg( 1+\frac{\left(
r-{\rm c}_2\tau \right) ^{2}}{1+\tau }\bigg) ^{-2}r^{2}\sin \theta d\theta
drd\tau \\
&\lesssim &\int_{\frac{t}{2}}^{\frac{3|x|}{4{\rm c}_1}+\frac{t}{4}%
}\int_{0}^{\infty }\int_{\left \vert \left \vert x\right \vert -r\right
\vert }^{\left \vert x\right \vert +r}\left( 1+t-\tau \right) ^{-2}\bigg(1+\frac{( z-{\rm c}_1\left( t-\tau
\right) ) ^{2}}{ 1+t-\tau}\bigg)^{-N}\\
&&\cdot\left( 1+t\right) ^{-4}\bigg( 1+\frac{\left( r-{\rm c}_2%
\tau \right) ^{2}}{1+\tau }\bigg) ^{-2}rz\frac{1}{\left \vert x\right \vert
}dzdrd\tau \\
&\lesssim &\left( 1+t\right) ^{-4}\left \vert x\right \vert ^{-1}\int_{\frac{%
t}{2}}^{\frac{3|x|}{4{\rm c}_1}+\frac{t}{4}} \int_{0}^{\infty }\left(
1+t-\tau \right) ^{-2+\frac{3}{2}}\bigg( 1+\frac{\left( r-{\rm c}_2\tau \right) ^{2}}{1+\tau }\bigg) ^{-2}rdrd\tau \\
&\lesssim &\left( 1+t\right) ^{-\frac{7}{2}}\int_{\frac{t}{2}}^{\frac{3|x|}{4{\rm c}_1}+\frac{t}{4}}\left( 1+t-\tau \right) ^{-\frac{1}{2}}d\tau \\
&\lesssim &\left( 1+t\right) ^{-\frac{7}{2}}\bigg[\bigg(1+\frac{t}{2}\bigg)^{\frac{1}{2}}-
\bigg(1+\frac{{\rm c}_1t-|x|}{4{\rm c}_1}\bigg)^{\frac{1}{2}}\bigg] \\
&\lesssim& \left( 1+t\right) ^{-3}\lesssim\left( 1+t\right) ^{-2}\bigg( 1+\frac{\left( {\rm c}_1t-\left
\vert x\right \vert \right) ^{2}}{1+t}\bigg) ^{-1}\hbox{.}
\end{eqnarray*}

\vspace{1.8mm}
\textbf{Case 3.4:} $\frac{3|x|}{4{\rm c}_1}+\frac{t}{4}\leq\tau\leq \frac{|x|}{4{\rm c}_1}+\frac{3t}{4}$. Similar to $J_{7}$, we can directly have
\begin{eqnarray*}
J_{8}&\triangleq&\int_{\frac{3|x|}{4{\rm c}_1}+\frac{t}{4}}^{\frac{|x|}{4{\rm c}_1}+\frac{3t}{4}}\int_{\mathbb{R}^3}(1+t-\tau)^{-2}\Big(1+\frac{(|x-y|-{\rm c}_1(t-\tau))^2}{1+t-\tau}\Big)^{-N}(1+\tau)^{-4}\Big(1+\frac{(|y|-{\rm c}_2\tau)^2}{1+\tau}\Big)^{-2}dyd\tau\\
&\lesssim &\left( 1+t\right) ^{-\frac{7}{2}}\int_{\frac{3|x|}{4{\rm c}_1}+\frac{t}{4}}^{\frac{|x|}{4{\rm c}_1}+\frac{3t}{4}}\left( 1+t-\tau \right)
^{-\frac{1}{2}}d\tau \\
&\lesssim &\left( 1+t\right) ^{-\frac{7}{2}}\bigg[\left(1+\frac{3({\rm c}_1t-|x|)}{4%
{\rm c}_1}\right)^{\frac{1}{2}} -\left(1+\frac{({\rm c}_1t-|x|)}{4{\rm c}_1}\right)^{\frac{1}{2}}%
\bigg]\lesssim (1+t)^{-3}\\
&\lesssim& \left( 1+t\right) ^{-2}\bigg( 1+\frac{\left( {\rm c}_1t-\left
\vert x\right \vert \right) ^{2}}{1+t}\bigg) ^{-1}\hbox{.}
\end{eqnarray*}

\vspace{1.8mm}
\textbf{Case 3.5:} $\frac{|x|}{4{\rm c}_1}+\frac{3t}{4}\leq\tau\leq t$. We decompose $\mathbb{R}^{3}$ into two parts%
\begin{equation*}
\int_{\frac{|x|}{4{\rm c}_1}+\frac{3t}{4}}^{t}\left( \int_{\left \vert y\right \vert \leq \frac{%
\left \vert x\right \vert +{\rm c}_1t}{2}}+\int_{\left \vert y\right \vert >%
\frac{\left \vert x\right \vert +{\rm c}_1t}{2}}\right) \left( \cdots
\right) dyd\tau =:J_{9}+J_{10}\hbox{.}
\end{equation*}
If $\frac{|x|}{4{\rm c}_1}+\frac{3t}{4} \leq \tau \leq t$ and $\left \vert y\right \vert \leq \frac{%
\left \vert x\right \vert +{\rm c}_1t}{2}$, then%
\begin{equation*}
{\rm c}_2\tau -\left \vert y\right \vert \geq \frac{3{\rm c}_1t}{4}+\frac{\left \vert x\right \vert%
}{4}-\frac{\left \vert
x\right \vert +{\rm c}_1t}{2}= \frac{{\rm c}_1t-\left \vert x\right
\vert }{4}\hbox{.}
\end{equation*}%
If $\frac{|x|}{4{\rm c}_1}+\frac{3t}{4} \leq \tau \leq t$ and $\left \vert y\right \vert >\frac{%
\left \vert x\right \vert +{\rm c}_1t}{2}$, then%
\begin{eqnarray*}
\left \vert x-y\right \vert -{\rm c}_1\left( t-\tau \right) &\geq &\left
\vert y\right \vert -\left \vert x\right \vert -{\rm c}_1\left( t-\tau
\right) \geq \frac{\left \vert x\right \vert +{\rm c}_1t}{2}-\left \vert
x\right \vert -{\rm c}_1t+\frac{{\rm c}_1t}{2}+\frac{1}{4}\left( {\rm c}_1%
t+\left \vert x\right \vert \right) \geq \frac{{\rm c}_1t-\left \vert
x\right \vert }{4}\hbox{.}
\end{eqnarray*}%
Hence,
\begin{eqnarray*}
J_{9} &\lesssim &\int_{\frac{|x|}{4{\rm c}_1}+\frac{3t}{4}}^{t}\int_{\left \vert y\right \vert \leq
\frac{\left \vert x\right \vert +{\rm c}_1t}{2}}\left( 1+t-\tau \right)
^{-2}\Big(1+\frac{(|x-y|-{\rm c}_1(t-\tau))^2}{1+t-\tau}\Big)^{-N}\\
&&\cdot\left( 1+t\right)
^{-4}\bigg( 1+\frac{\left( {\rm c}_1t-\left \vert x\right \vert \right) ^{2}%
}{1+t}\bigg) ^{-1}\left( 1+\frac{\left( {\rm c}_2\tau-\left \vert y\right \vert \right) ^{2}%
}{1+t}\right) ^{-1}dyd\tau \\
&\lesssim &\left( 1+t\right) ^{-4}\bigg( 1+\frac{\left( {\rm c}_1t-\left
\vert x\right \vert \right) ^{2}}{1+t}\bigg) ^{-1}\int_{\frac{|x|}{4{\rm c}_1}+\frac{3t}{4}}^{t}(1+t-\tau)^{-2}(1+t-\tau)^{\frac{5}{4}}(1+\tau)^{\frac{3}{4}}d\tau\\
&\lesssim &\left( 1+t\right) ^{-3}\bigg( 1+\frac{\left( {\rm c}_1t-\left
\vert x\right \vert \right) ^{2}}{1+t}\bigg) ^{-1}\hbox{,}
\end{eqnarray*}%
\begin{eqnarray*}
J_{10} &\lesssim &\int_{\frac{|x|}{4{\rm c}_1}+\frac{3t}{4}}^{t}\int_{\left \vert y\right \vert >%
\frac{\left \vert x\right \vert +{\rm c}_1t}{2}}\left( 1+t-\tau \right)
^{-2}\Big(1+\frac{({\rm c}_1t-|x|)^2}{1+t}\Big)^{-N/2}\\
&&\cdot\Big(1+\frac{(|x-y|-{\rm c}_1(t-\tau))^2}{1+t-\tau}\Big)^{-N/2}\left( 1+t\right) ^{-4}\Big(1+\frac{(|y|-{\rm c}_2\tau)^2}{1+\tau}\Big)^{-2}dyd\tau \\
&\lesssim &\left( 1+t\right) ^{-4}\Big(1+\frac{({\rm c}_1t-|x|)^2}{1+t}\Big)^{-N/2}\int_{\frac{|x|}{4{\rm c}_1}+\frac{3t}{4}}^{t}\left( 1+t-\tau \right)
^{-2+\frac{5}{2}}d\tau \\
&\lesssim &\left( 1+t\right) ^{-\frac{5}{2}}\Big(1+\frac{({\rm c}_1t-|x|)^2}{1+t}\Big)^{-N/2}\hbox{.}
\end{eqnarray*}%

\vspace{1.8mm}
\noindent\textbf{Case 4:} $\left( x,t\right) \in D_{32}:=\left \{ \frac{({\rm c}_1+{\rm c}_2)t}{2}\leq|x|\leq {\rm c}_2t-\sqrt{1+t}\right \}$. It's easy to see that $(|x|-{\rm c}_1t)^2\geq 1+t$ and $({\rm c}_2t-|x|)^2\geq 1+t$.

\textbf{Case 4.1:} $0\leq\tau\leq \frac{|x|-{\rm c}_1t}{2({\rm c}_2-{\rm c}_1)}$. Obviously, $\tau\leq\frac{t}{2}$. We decompose $\mathbb{R}^{3}$ into two parts%
\begin{equation*}
J_{11}\triangleq\int_0^{\frac{|x|-{\rm c}_1t}{2({\rm c}_2-{\rm c}_1)}}\left( \int_{\vert y\vert \geq \frac{|x|-{\rm c}_1t+({\rm c}_1+{\rm c}_2)\tau}{2}}+\int_{\vert y\vert \leq \frac{|x|-{\rm c}_1t+({\rm c}_1+{\rm c}_2)\tau}{2}}\right) ( \cdots) dyd\tau \hbox{.}
\end{equation*}
If $\vert y\vert \geq \frac{|x|-{\rm c}_1t+({\rm c}_1+{\rm c}_2)\tau}{2}$, then%
\begin{equation*}
\left \vert y\right \vert-{\rm c}_2\tau \geq \frac{|x|-{\rm c}_1t}{2}-\frac{{\rm c}_2-{\rm c}_1}{2}\tau\geq\frac{|x|-{\rm c}_1t}{4}.
\end{equation*}%
If $\vert y\vert \leq \frac{|x|-{\rm c}_1t+({\rm c}_1+{\rm c}_2)\tau}{2}$, then%
\begin{eqnarray*}
\left \vert x-y\right \vert -{\rm c}_1\left( t-\tau \right) \geq |x|-{\rm c}_1t+{\rm c}_1\tau-\frac{|x|-{\rm c}_1t+({\rm c}_1+{\rm c}_2)\tau}{2}\geq\frac{|x|-{\rm c}_1t}{4}.
\end{eqnarray*}%
Hence, similar to Case 2.1, one has
\begin{eqnarray*}
J_{11} \lesssim (1+t)^{-2}\Big(1+\frac{(|x|-{\rm c}_1t)^2}{1+t}\Big)^{-1}.
\end{eqnarray*}%

\textbf{Case 4.2:} $\frac{|x|-{\rm c}_1t}{2({\rm c}_2-{\rm c}_1)}\leq\tau\leq \frac{t}{2}$. Similar to Case 2.2, we have
\begin{eqnarray*}
&&J_{12}\triangleq\int_{\frac{|x|-{\rm c}_1t}{2({\rm c}_2-{\rm c}_1)}}^{\frac{t}{2}}\left( \int_{\big|\left \vert y\right \vert -{\rm c}_2%
\tau\big| \leq \sqrt{1+t}}+\int_{\big|\left \vert y\right \vert -{\rm c}_2%
\tau\big| \geq \sqrt{1+t}}\right) \left( \cdots \right) dyd\tau\\
&\lesssim& (1+t)^{-2}\int_{\frac{|x|-{\rm c}_1t}{2({\rm c}_2-{\rm c}_1)}}^{\frac{t}{2}}\int_{\big|\left \vert y\right \vert -{\rm c}_2%
\tau\big| \leq \sqrt{1+t}}\bigg(1+\frac{(|x-y|-{\rm c}_1(t-\tau))^2}{1+t-\tau}\bigg)^{-N}\bigg(1+\frac{(|y|-{\rm c}_2\tau)^2}{1+\tau}\bigg)^{-2}(1+\tau)^{-4}dyd\tau  \\
&&+(1+t)^{-2}\int_{\frac{|x|-{\rm c}_1t}{2({\rm c}_2-{\rm c}_1)}}^{\frac{t}{2}}\int_{\big|\left \vert y\right \vert -{\rm c}_2%
\tau\big| \geq \sqrt{1+t}}  \bigg(1+\frac{(|x-y|-{\rm c}_1(t-\tau))^2}{1+t-\tau}\bigg)^{-N}(1+\tau)^{-2}\\
&&\!\!\ \ \ \ \ \ \  \ \ \ \ \ \ \ \ \ \ \ \cdot\bigg(1+\frac{(|y|-{\rm c}_2\tau)^2}{1+t}\bigg)^{-2}(1+t)^{-2} dyd\tau   \\
&\lesssim& (1+t)^{-2}\bigg(1+\frac{|x|-{\rm c}_1t}{2({\rm c}_1+{\rm c}_2)}\bigg)^{-4}(1+t)^3+(1+t)^{-2}(1+t)^{-2}\bigg(1+\frac{|x|-{\rm c}_1t}{2({\rm c}_1+{\rm c}_2)}\bigg)^{-2}(1+t)^3\\
&\lesssim& (1+t)^{-2}\bigg(1+\frac{(|x|-{\rm c}_1t)^2}{1+t}\bigg)^{-1}(1+t)^{-1}\bigg(1+\frac{(({\rm c}_2-{\rm c}_1)t)^2}{4({\rm c}_1+{\rm c}_2)}\bigg)^{-1}(1+t)^3\\
&&+(1+t)^{-2}(1+t)^{-2}\bigg(1+\frac{(|x|-{\rm c}_1t)^2}{1+t}\bigg)^{-1}(1+t)^{-1}(1+t)^3\\
&\lesssim&(1+t)^{-2}\bigg(1+\frac{(|x|-{\rm c}_1t)^2}{1+t}\bigg)^{-1}.
\end{eqnarray*}

\textbf{Case 4.3:} $\frac{t}{2} \leq\tau\leq t-\frac{{\rm c}_2t-|x|}{4{\rm c}_2}$. We know that $\frac{{\rm c}_2t-|x|}{4{\rm c}_2}\leq t-\tau\leq \frac{t}{2}$. Then by using Lemma \ref{A.8}, one has
\begin{eqnarray*}
J_{13}&\triangleq&\int_{\frac{t}{2}}^{t-\frac{{\rm c}_2t-|x|}{4{\rm c}_2}}\int_{\mathbb{R}^3}(1+t-\tau)^{-2}\Big(1+\frac{(|x-y|-{\rm c}_1(t-\tau))^2}{1+t-\tau}\Big)^{-N}(1+\tau)^{-4}\Big(1+\frac{(|y|-{\rm c}_2\tau)^2}{1+\tau}\Big)^{-2}dyd\tau\\
&\lesssim &\bigg(1+\frac{{\rm c}_2t-|x|}{4{\rm c}_2}\bigg)^{-2}(1+t)^{-4}\int_{0}^{t}\int_{\mathbb{R}^3}\Big(1+\frac{(|x-y|-{\rm c}_1(t-\tau))^2}{1+t-\tau}\Big)^{-N}\Big(1+\frac{(|y|-{\rm c}_2\tau)^2}{1+\tau}\Big)^{-2}dyd\tau \\
&\lesssim &\bigg(1+\frac{{\rm c}_2t-|x|}{4{\rm c}_2}\bigg)^{-2}(1+t)^{-4}(1+t)^3\\
&\lesssim&(1+t)^{-2}\bigg(1+\frac{({\rm c}_2t-|x|)^2}{1+t}\bigg)^{-1}.
\end{eqnarray*}

\textbf{Case 4.4:} $t-\frac{{\rm c}_2t-|x|}{4{\rm c}_2} \leq\tau\leq t$. We decompose $\mathbb{R}^{3}$ into two parts%
\begin{equation*}
\int_{t-\frac{{\rm c}_2t-|x|}{4{\rm c}_2}}^{t}\left( \int_{\left \vert x-y\right \vert \leq \frac{{\rm c}_2t-|x|}{2}}+\int_{\left \vert x-y\right \vert \geq \frac{{\rm c}_2t-|x|}{2}}\right) \left( \cdots
\right) dyd\tau =:J_{14}+J_{15}\hbox{.}
\end{equation*}
If $\left \vert x-y\right \vert \leq \frac{{\rm c}_2t-|x|}{2}$, then%
\begin{equation*}
{\rm c}_2\tau -\left \vert y\right \vert \geq {\rm c}_2t-\frac{{\rm c}_2t-|x|}{4}-|x|-|x-y|\geq \frac{{\rm c}_2t-|x|}{4}.
\end{equation*}%
If $\left \vert x-y\right \vert \geq \frac{{\rm c}_2t-|x|}{2}$, then%
\begin{eqnarray*}
\left \vert x-y\right \vert -{\rm c}_1\left( t-\tau \right) \geq \frac{{\rm c}_2t-|x|}{2}-\frac{{\rm c}_1({\rm c}_2t-|x|)}{4{\rm c}_2}\geq \frac{{\rm c}_2t-|x|}{4}.
\end{eqnarray*}%
Hence,
\begin{eqnarray*}
J_{14} &\lesssim &\int_{t-\frac{{\rm c}_2t-|x|}{4{\rm c}_2}}^{t}\int_{\left \vert x-y\right \vert \leq \frac{{\rm c}_2t-|x|}{2}}\left( 1+t-\tau \right)
^{-2}\Big(1+\frac{(|x-y|-{\rm c}_1(t-\tau))^2}{1+t-\tau}\Big)^{-N}\\
&&\cdot\left( 1+t\right)
^{-4}\bigg( 1+\frac{\left( {\rm c}_2t-\left \vert x\right \vert \right) ^{2}%
}{1+t}\bigg) ^{-1}\left( 1+\frac{\left( {\rm c}_2\tau-\left \vert y\right \vert \right) ^{2}%
}{1+t}\right) ^{-1}dyd\tau \\
&\lesssim &\left( 1+t\right) ^{-4}\bigg( 1+\frac{\left( {\rm c}_2t-\left
\vert x\right \vert \right) ^{2}}{1+t}\bigg) ^{-1}\int_{t-\frac{{\rm c}_2t-|x|}{4{\rm c}_2}}^{t}(1+t-\tau)^{-2}(1+t-\tau)^{\frac{5}{4}}(1+\tau)^{\frac{3}{4}}d\tau\\
&\lesssim &\left( 1+t\right) ^{-3}\bigg( 1+\frac{\left( {\rm c}_2t-\left
\vert x\right \vert \right) ^{2}}{1+t}\bigg) ^{-1}\hbox{,}
\end{eqnarray*}%
\begin{eqnarray*}
J_{15} &\lesssim &\int_{t-\frac{{\rm c}_2t-|x|}{4{\rm c}_2}}^{t}\int_{\left \vert x-y\right \vert \geq \frac{{\rm c}_2t-|x|}{2}}\left( 1+t-\tau \right)
^{-2}\Big(1+\frac{({\rm c}_2t-|x|)^2}{1+t}\Big)^{-N/2}\\
&&\cdot\Big(1+\frac{(|x-y|-{\rm c}_1(t-\tau))^2}{1+t-\tau}\Big)^{-N/2}\left( 1+t\right) ^{-4}\Big(1+\frac{(|y|-{\rm c}_2\tau)^2}{1+\tau}\Big)^{-2}dyd\tau \\
&\lesssim &\left( 1+t\right) ^{-4}\Big(1+\frac{({\rm c}_2t-|x|)^2}{1+t}\Big)^{-N/2}\int_{t-\frac{{\rm c}_2t-|x|}{4{\rm c}_2}}^{t}\left( 1+t-\tau \right)
^{-2+\frac{5}{2}}d\tau \\
&\lesssim &\left( 1+t\right) ^{-\frac{5}{2}}\Big(1+\frac{({\rm c}_2t-|x|)^2}{1+t}\Big)^{-N/2}\hbox{.}
\end{eqnarray*}

\noindent\textbf{Case 5:} $\left( x,t\right) \in D_{33}:=\bigg\{{\rm c}_1t+\sqrt{1+t}\leq |x|\leq  \frac{({\rm c}_1+{\rm c}_2)t}{2} \bigg\}$. We study this case by dividing the time interval into four parts.

\textbf{Case 5.1:} $0\leq\tau\leq \frac{|x|-{\rm c}_1t}{2({\rm c}_2-{\rm c}_1)}$. Obviously, $\tau\leq\frac{t}{2}$. We decompose $\mathbb{R}^{3}$ into two parts%
\begin{equation*}
J_{16}\triangleq\int_0^{\frac{|x|-{\rm c}_1t}{2({\rm c}_2-{\rm c}_1)}}\left( \int_{\vert y\vert \geq \frac{|x|-{\rm c}_1t+({\rm c}_1+{\rm c}_2)\tau}{2}}+\int_{\vert y\vert \leq \frac{|x|-{\rm c}_1t+({\rm c}_1+{\rm c}_2)\tau}{2}}\right) ( \cdots) dyd\tau \hbox{.}
\end{equation*}
If $\vert y\vert \geq \frac{|x|-{\rm c}_1t+({\rm c}_1+{\rm c}_2)\tau}{2}$, then%
\begin{equation*}
\left \vert y\right \vert-{\rm c}_2\tau \geq \frac{|x|-{\rm c}_1t}{2}-\frac{{\rm c}_2-{\rm c}_1}{2}\tau\geq\frac{|x|-{\rm c}_1t}{4}.
\end{equation*}%
If $\vert y\vert \leq \frac{|x|-{\rm c}_1t+({\rm c}_1+{\rm c}_2)\tau}{2}$, then%
\begin{eqnarray*}
\left \vert x-y\right \vert -{\rm c}_1\left( t-\tau \right) \geq |x|-{\rm c}_1t+{\rm c}_1\tau-\frac{|x|-{\rm c}_1t+({\rm c}_1+{\rm c}_2)\tau}{2}\geq\frac{|x|-{\rm c}_1t}{4}.
\end{eqnarray*}%
Hence, similar to Case 2.1, one has
\begin{eqnarray*}
\arraycolsep=1.5pt
J_{16}\lesssim (1+t)^{-2}\Big(1+\frac{(|x|-{\rm c}_1t)^2}{1+t}\Big)^{-1}.
\end{eqnarray*}

\textbf{Case 5.2:} $\frac{|x|-{\rm c}_1t}{2({\rm c}_2-{\rm c}_1)}\leq\tau\leq \frac{t}{2}$. By using  the spherical coordinates as in Case 3.2, one has
\begin{eqnarray*}
J_{17}&\triangleq&\int_{\frac{|x|-{\rm c}_1t}{2({\rm c}_2-{\rm c}_1)}}^{\frac{t}{2}}\int_{\mathbb{R}^3}(1+t-\tau)^{-2}\Big(1+\frac{(|x-y|-{\rm c}_1(t-\tau))^2}{1+t-\tau}\Big)^{-N}(1+\tau)^{-4}\Big(1+\frac{(|y|-{\rm c}_2\tau)^2}{1+\tau}\Big)^{-2}dyd\tau\\
 &\lesssim &\int_{\frac{|x|-{\rm c}_1t}{2({\rm c}_2-{\rm c}_1)}}^{\frac{t}{2}}\int_{0}^{\infty }\int_{0}^{\pi }\left( 1+t-\tau
\right) ^{-2}\bigg(1+\frac{( \sqrt{\left \vert x\right \vert
^{2}+r^{2}-2r\left \vert x\right \vert \cos \theta }-{\rm c}_1\left( t-\tau
\right) ) ^{2}}{ 1+t-\tau}\bigg)^{-N}\\
&&\ \ \ \ \ \ \ \ \ \ \ \ \ \ \ \ \ \cdot\left( 1+\tau \right)
^{-4}\bigg( 1+\frac{\left( r-{\rm c}_2\tau \right) ^{2}}{1+\tau }\bigg)
^{-2}r^{2}\sin \theta d\theta drd\tau \\
&\lesssim &\int_{\frac{|x|-{\rm c}_1t}{2({\rm c}_2-{\rm c}_1)}
}^{\frac{t}{2}}\int_{0}^{\infty }\int_{0}^{\infty }\left( 1+t-\tau \right)
^{-2}\bigg(1+\frac{( z-{\rm c}_1\left( t-\tau
\right) ) ^{2}}{ 1+t-\tau}\bigg)^{-N}\\
&&\ \ \ \ \ \ \ \ \ \ \ \ \ \ \ \ \ \cdot\left( 1+\tau \right) ^{-4}\bigg( 1+\frac{%
\left( r-{\rm c}_2\tau \right) ^{2}}{1+\tau }\bigg) ^{-2}rz\frac{1}{\left
\vert x\right \vert }dzdrd\tau \\
&\lesssim &\int_{\frac{|x|-{\rm c}_1t}{2({\rm c}_2-{\rm c}_1)}
}^{\frac{t}{2}}\int_{0}^{\infty }\left( 1+t-\tau \right) ^{-2+%
\frac{3}{2}}\left( 1+\tau \right) ^{-4}\bigg( 1+\frac{\left( r-{\rm c}_2%
\tau \right) ^{2}}{1+\tau }\bigg) ^{-2}\frac{r}{\left \vert x\right \vert }%
drd\tau \\
&\lesssim &\left( 1+t\right) ^{-2+\frac{3}{2}}\left \vert x\right
\vert ^{-1}\int_{\frac{|x|-{\rm c}_1t}{2({\rm c}_2-{\rm c}_1)}
}^{\frac{t}{2}}\left( 1+\tau \right) ^{-4+\frac{3}{2}}d\tau \\
&\lesssim &\left( 1+t\right)^{-2}\bigg( 1+\frac{\left( \left
\vert x\right \vert-{\rm c}_1t \right) ^{2}}{1+t}\bigg) ^{-1}\hbox{.}
\end{eqnarray*}

\textbf{Case 5.3:} $\frac{t}{2} \leq\tau\leq t-\frac{{\rm c}_2t-|x|}{4{\rm c}_2}$. It is easy to see that $\frac{{\rm c}_1t-|x|}{4{\rm c}_2}\leq t-\tau\leq\frac{t}{2}$. Then by using Lemma \ref{A.8}, one has
\begin{eqnarray*}
J_{18}&\triangleq&\int_{\frac{t}{2}}^{t-\frac{{\rm c}_2t-|x|}{4{\rm c}_2}}\int_{\mathbb{R}^3}(1+t-\tau)^{-2}\Big(1+\frac{(|x-y|-{\rm c}_1(t-\tau))^2}{1+t-\tau}\Big)^{-N}(1+\tau)^{-4}\Big(1+\frac{(|y|-{\rm c}_2\tau)^2}{1+\tau}\Big)^{-2}dyd\tau\\
&\lesssim &\bigg(1+\frac{{\rm c}_1t-|x|}{4{\rm c}_2}\bigg)^{-2}(1+t)^{-4}\int_{0}^{t}\int_{\mathbb{R}^3}\Big(1+\frac{(|x-y|-{\rm c}_1(t-\tau))^2}{1+t-\tau}\Big)^{-N}\Big(1+\frac{(|y|-{\rm c}_2\tau)^2}{1+\tau}\Big)^{-2}dyd\tau \\
&\lesssim &\bigg(1+\frac{{\rm c}_1t-|x|}{4{\rm c}_2}\bigg)^{-2}(1+t)^{-4}(1+t)^3\\
&\lesssim&(1+t)^{-2}\bigg(1+\frac{({\rm c}_1t-|x|)^2}{1+t}\bigg)^{-1}.
\end{eqnarray*}

\textbf{Case 5.4:} $t-\frac{{\rm c}_2t-|x|}{4{\rm c}_2} \leq\tau\leq t$. We decompose $\mathbb{R}^{3}$ into two parts:

if $\left \vert x-y\right \vert \leq \frac{{\rm c}_2t-|x|}{2}$, then%
\begin{equation*}
{\rm c}_2\tau -\left \vert y\right \vert \geq {\rm c}_2t-\frac{{\rm c}_2t-|x|}{4}-|x|-|x-y|\geq \frac{{\rm c}_2t-|x|}{4};
\end{equation*}

if $\left \vert x-y\right \vert \geq \frac{{\rm c}_2t-|x|}{2}$, then%
\begin{eqnarray*}
\left \vert x-y\right \vert -{\rm c}_1\left( t-\tau \right) \geq \frac{{\rm c}_2t-|x|}{2}-\frac{{\rm c}_1({\rm c}_2t-|x|)}{4{\rm c}_2}\geq \frac{{\rm c}_2t-|x|}{4}.
\end{eqnarray*}%
Hence,
\begin{eqnarray*}
J_{19} &\triangleq&\int_{t-\frac{{\rm c}_2t-|x|}{4{\rm c}_2}}^{t}\left( \int_{\left \vert x-y\right \vert \leq \frac{{\rm c}_2t-|x|}{2}}+\int_{\left \vert x-y\right \vert \geq \frac{{\rm c}_2t-|x|}{2}}\right) \left( \cdots
\right) dyd\tau\\
&\lesssim &\int_{t-\frac{{\rm c}_2t-|x|}{4{\rm c}_2}}^{t}\int_{\left \vert x-y\right \vert \leq \frac{{\rm c}_2t-|x|}{2}}\left( 1+t-\tau \right)
^{-2}\Big(1+\frac{(|x-y|-{\rm c}_1(t-\tau))^2}{1+t-\tau}\Big)^{-N}\\
&&\ \ \ \ \ \ \ \ \ \ \ \ \ \ \ \ \ \ \cdot\left( 1+t\right)
^{-4}\bigg( 1+\frac{\left( {\rm c}_2t-\left \vert x\right \vert \right) ^{2}%
}{1+t}\bigg) ^{-1}\left( 1+\frac{\left( {\rm c}_2\tau-\left \vert y\right \vert \right) ^{2}%
}{1+t}\right) ^{-1}dyd\tau \\
&&+\int_{t-\frac{{\rm c}_2t-|x|}{4{\rm c}_2}}^{t}\int_{\left \vert x-y\right \vert \geq \frac{{\rm c}_2t-|x|}{2}}\left( 1+t-\tau \right)
^{-2}\Big(1+\frac{({\rm c}_2t-|x|)^2}{1+t}\Big)^{-N/2}\\
&&\ \ \ \ \ \ \ \ \ \ \ \ \ \ \ \ \ \ \ \cdot\Big(1+\frac{(|x-y|-{\rm c}_1(t-\tau))^2}{1+t-\tau}\Big)^{-N/2}\left( 1+t\right) ^{-4}\Big(1+\frac{(|y|-{\rm c}_2\tau)^2}{1+\tau}\Big)^{-2}dyd\tau \\
&\lesssim &\left( 1+t\right) ^{-4}\bigg( 1+\frac{\left( {\rm c}_2t-\left
\vert x\right \vert \right) ^{2}}{1+t}\bigg) ^{-1}\int_{t-\frac{{\rm c}_2t-|x|}{4{\rm c}_2}}^{t}(1+t-\tau)^{-2}(1+t-\tau)^{\frac{5}{4}}(1+\tau)^{\frac{3}{4}}d\tau\\
&&+\left( 1+t\right) ^{-4}\Big(1+\frac{({\rm c}_2t-|x|)^2}{1+t}\Big)^{-N/2}\int_{t-\frac{{\rm c}_2t-|x|}{4{\rm c}_2}}^{t}\left( 1+t-\tau \right)
^{-2+\frac{5}{2}}d\tau \\
&\lesssim &\left( 1+t\right) ^{-\frac{5}{2}}\bigg( 1+\frac{\left( {\rm c}_2t-\left
\vert x\right \vert \right) ^{2}}{1+t}\bigg) ^{-1}\hbox{.}
\end{eqnarray*}%

\noindent\textbf{Case 6:} $\left( x,t\right) \in D_{33}:=\bigg\{|x|\geq  {\rm c}_2t+\sqrt{1+t} \bigg\}$. It's obvious that $(|x|-{\rm c}_2t)^2\geq 1+t$. We first consider two subcases. The first one is $\big||y|-{\rm c}_2\tau\big|\geq\frac{|x|-{\rm c}_2t}{2}\geq0$. The second one is $\big||y|-{\rm c}_2\tau\big|\leq\frac{|x|-{\rm c}_2t}{2}$, which implies
\begin{eqnarray*}
\left \vert x-y\right \vert -{\rm c}_1\left( t-\tau \right) \geq |x|-|y|-{\rm c}_2t+{\rm c}_2\tau\geq \frac{|x|-{\rm c}_2t}{2}.
\end{eqnarray*}
Then when $0\leq \tau\leq\frac{t}{2}$, by using Lemma \ref{A.7} again, one has


 \begin{eqnarray*}
J_{20} &\triangleq&\!\!\int_{0}^{\frac{t}{2}}\left( \int_{\big||y|-{\rm c}_2\tau\big|\geq\frac{|x|-{\rm c}_2t}{2}}+\int_{\big||y|-{\rm c}_2\tau\big|\leq\frac{|x|-{\rm c}_2t}{2}}\right) \left( \cdots \right) dyd\tau\\
&\lesssim\!\!&\!\! (1+t)^{-1}\Big(1+\frac{(|x|-{\rm c}_2t)^2}{1+t}\Big)^{-1}\!\int_{0}^{\frac{t}{2}}\!\!\int_{\big||y|-{\rm c}_2\tau\big|\geq\frac{|x|-{\rm c}_2t}{2}}(1+t-\tau)^{-1}\Big(1+\frac{(|x-y|-{\rm c}_1(t-\tau))^2}{1+t-\tau}\Big)^{-N}\\
&&\ \ \ \ \ \ \  \ \ \ \ \ \ \ \ \ \ \ \  \ \ \ \ \ \ \ \ \ \ \ \  \ \ \ \ \ \ \ \ \ \ \ \  \ \ \ \ \ \ \ \ \ \ \ \cdot(1+\tau)^{-4}\bigg(\frac{1+\tau}{1+t}\bigg)\Big(1+\frac{(|y|-{\rm c}_2\tau)^2}{1+\tau}\Big)^{-1}dyd\tau\\
&&\!\!\!+(1+t)^{-2}\Big(1+\frac{(|x|-{\rm c}_2t)^2}{1+t}\Big)^{-N}\!\!\int_{0}^{\frac{t}{2}}\int_{\big||y|-{\rm c}_2\tau\big|\leq\frac{|x|-{\rm c}_2t}{2}}(1+\tau)^{-4}\Big(1+\frac{(|y|-{\rm c}_2\tau)^2}{1+\tau}\Big)^{-2}\bigg(\frac{1+\tau}{1+t}\bigg)^Ndyd\tau\\
&\lesssim&\!\! (1+t)^{-2}\Big(1+\frac{(|x|-{\rm c}_1t)^2}{1+t}\Big)^{-1}\int_{0}^{\frac{t}{2}}\underbrace{(1+t-\tau)^{-1}(1+t-\tau)^{\frac{5}{2}\cdot\frac{2}{5}}(1+\tau)^{-3}(1+\tau)^{\frac{5}{2}\cdot \frac{3}{5}}}_{where\ we\ have\ used\ Young\ inequality\ with\ p=\frac{5}{2}\ and\ q=\frac{5}{3}}d\tau\\
&&+(1+t)^{-2}\Big(1+\frac{(|x|-{\rm c}_1t)^2}{1+t}\Big)^{-N}\int_{0}^{\frac{t}{2}}(1+\tau)^{-\frac{3}{2}}d\tau\\
&\lesssim&\!\!(1+t)^{-2}\Big(1+\frac{(|x|-{\rm c}_1t)^2}{1+t}\Big)^{-1}.
\end{eqnarray*}
On the other hand, the case $\frac{t}{2}\leq\tau\leq t$ can be estimated easily. We directly give the following: 
 \begin{eqnarray*}
J_{21} &\triangleq&\!\!\int_{\frac{t}{2}}^t\left( \int_{\big||y|-{\rm c}_2\tau\big|\geq\frac{|x|-{\rm c}_2t}{2}}+\int_{\big||y|-{\rm c}_2\tau\big|\leq\frac{|x|-{\rm c}_2t}{2}}\right) \left( \cdots \right) dyd\tau\\
&\lesssim&\!\!(1+t)^{-2}\Big(1+\frac{(|x|-{\rm c}_1t)^2}{1+t}\Big)^{-1}.
\end{eqnarray*}

In summary, combining the estimates $J_i$ $(i=1,2,\cdots,21)$ suffices to complete the proof of $\mathcal{K}_3$.  \ \ \ \ \ \ \ \ \ \ \ \ \ \ \ \ \ \ \ \ \textsquare

\vspace{3mm}
\textit{\textbf{Proof of $\mathcal{K}_1$ and $\mathcal{K}_2$ in Proposition \ref{l 4.2}}}}. Compared with the proof of $\mathcal{K}_3$, the proof of $\mathcal{K}_1$ and $\mathcal{K}_2$ are relatively straightforward. We just give the outline of the proof. At first,  we decompose space-time domain into the
following five regions:
\begin{align*}
\tilde{D}_{1}& =\left \{ |x|\leq \sqrt{1+t}\right \} \,, \\
\ \tilde{D}_{2}& =\left \{ {\rm c}t-\sqrt{1+t}\leq |x|\leq {\rm c}t+\sqrt{1+t}%
\right \} \,, \\
\tilde{D}_{3}& =\left \{ |x|\geq {\rm c}t+\sqrt{1+t}\right \} \,, \\
\tilde{D}_{4}& =\Big \{ \sqrt{1+t}\leq |x|\leq \frac{1}{2}{\rm c}t\Big \} \,,
\\
\tilde{D}_{5}& =\Big \{ \frac{1}{2}{\rm c}t\leq |x|\leq {\rm c}t-\sqrt{1+t}%
\Big \} \,,
\end{align*}
where ``${\rm c}$" can be chosen as the propagation speeds ${\rm c}_1$ and ${\rm c}_2$ of Huygens waves mentioned above. Second, by using Lemma \ref{A.7} and the similar decompositions of the temporal intervals in each space-time  domain $\tilde{D}_i$, one can obtain the desired space-time estimates in these domains. Finally, by combining these estimates, we finish the proof of $\mathcal{K}_1$ and $\mathcal{K}_2$. \ \ \ \ \ \ \ \ \ \ \ \ \ \ \ \ \ \ \ \ \ \ \ \ \ \ \ \ \ \ \ \ \ \ \ \ \ \ \ \ \ \ \ \ \ \ \ \ \ \ \ \ \ \ \ \ \ \ \ \ \ \ \ \ \ \ \ \ \ \ \ \ \ \ \textsquare

\section*{Acknowledgments}
Z.G. Wu was supported by Guangxi Natural Science Foundation (No. 2026GXNSFAA00641036) and National Natural Science Foundation of China (No. 12661043). Y.H. Zhang was spported by Guangxi Natural Science Foundation (No. 2026GXNSFFA00640002 and 2024GXNSFDA010071), National Natural Science Foundation of China (No. 12271114), Center for Applied Mathematics of Guangxi (Guangxi Normal University), and the Key Laboratory of Mathematical Model and Application (Guangxi Normal University), Education Department of Guangxi Zhuang Autonomous Region.

\vspace{6mm}

{\bf Data Availability:} No datasets were generated or analyzed during the current study.

\vspace{4mm}

{\bf Conflict of Interest:} The author declares that there is no conflict of interest.

\end{document}